\def \b {\mathfrak{b}}
\def \f {\mathfrak{f}}
\def \g {\mathfrak{g}}
\def \h {\mathfrak{h}}
\def \SS {\mathfrak {S}}
\def\CC {{\mathbb C}}     %% complex numbers
\def\ZZ {{\mathbb Z}}     %% integers
\newcommand{\gl}   {\mathrm{gl}}
\renewcommand{\sl} {\mathrm{sl}}
\newcommand{\so}   {\mathrm{so}}
\renewcommand{\sp}   {\mathrm{sp}}
\newcommand{\soc}  {\mathrm{soc}}
\newcommand{\codim}  {\mathrm{codim}}
\newcommand{\im} {\mathrm{im}}
\newcommand{\id} {\mathrm{id}}
\newcommand{\sign} {\mathrm{sign}}
\renewcommand {\phi} {\varphi}
\newtheorem{thm}{Theorem}[section]
\newtheorem{lemma}[thm]{Lemma}
\newtheorem{prop}[thm]{Proposition}
\newtheorem{coro}[thm]{Corollary}
\newtheorem{df}{Definition}[section]
\begin{document}

\title{Branching laws for tensor modules over classical locally finite Lie algebras}
\author{Elitza Hristova}
\date{}

\setcounter{page}{1}

\maketitle

\begin{abstract} 
Let $\g'$ and $\g$ be isomorphic to any two of the Lie algebras $\gl(\infty), \sl(\infty), \sp(\infty)$, and $\so(\infty)$. Let $M$ be a simple tensor $\g$-module (\cite{PSt}, \cite{PSe}). We introduce the notion of an embedding $\g' \subset \g$ of general tensor type and derive branching laws for triples $\g', \g, M$, where $\g' \subset \g$ is an embedding of general tensor type. %and $M$ is a simple tensor $\g$-module. % (\cite{PSt}, \cite{PSe}). 
More precisely, since $M$ is in general not semisimple as a $\g'$-module, we determine the socle filtration of $M$ over $\g'$. Due to the description of embeddings of classical locally finite Lie algebras given in \cite{DP}, %when $\g' \not\cong \gl(\infty)$, 
our results hold for all possible embeddings $\g' \subset \g$ unless $\g' \cong \gl(\infty)$.
\end{abstract}

\section{Introduction}
Given an embedding $\g' \subset \g$ of two Lie algebras and a simple $\g$-module $M$, the branching problem is to determine the structure of $M$ as a $\g'$-module. This is a classical problem in the theory of finite-dimensional Lie algebras. By Weyl's semisimplicity theorem, when $\g'$ is finite dimensional semisimple the branching problem reduces to finding the multiplicity of any simple $\g'$-module $M'$ as a direct summand of $M$. This is however not a simple task, due to the abundance of possible isomorphism classes of embeddings $\g' \subset \g$ (\cite{Dy}). Therefore, even for the classical series of Lie algebras explicit solutions of the branching problem are known only for specific cases. Such solutions are referred to as \textit{branching laws} or \textit{branching rules} and examples can be found in e.g. \cite{Z}, \cite{HTW}.

In this paper we consider the branching problem for the classical locally finite Lie algebras. These are the Lie algebras $\gl(\infty), \sl(\infty), \sp(\infty)$, and $\so(\infty)$, and they are defined as unions of the respective finite-dimensional Lie algebras under the upper-left corner inclusions. Here the situation is quite different from the finite-dimensional case. On the one hand, the description of the Lie algebra embeddings given in \cite{DP} is much simpler than the classical description of Dynkin in the finite-dimensional case. %(\cite{Dy}). 
On the other hand, the modules of interest, called simple tensor modules, are in general not completely reducible over the subalgebra. Therefore, the branching problem involves more than just determining the multiplicities of all simple constituents. One has to determine a semisimple filtration of the given module over the subalgebra and it is a natural choice to work with the socle filtration. In this way, the goal of the present work is to solve the following branching problem. Given an embedding $\g' \subset \g$ of two classical locally finite Lie algebras and a simple tensor $\g$-module $M$, find the socle filtration of $M$ as a $\g'$-module. 

The structure of the paper is as follows. We start by giving some background on locally finite Lie algebras and by presenting some finite-dimensional branching laws which are used in the paper. Following the description of embeddings of classical locally finite Lie algebras given in \cite{DP}, we introduce the notion of an embedding $\g' \subset \g$ of general tensor type. In the case $\g' \not\cong \gl(\infty)$ this notion describes all possible embeddings. One of the main results of the paper is Theorem \ref{thmMainCC} which shows that the branching problem for embeddings of general tensor type can be reduced to branching problems for embeddings of three simpler types. Then in Section \ref{secGL} we determine explicitly the branching laws for these three types of embeddings in the case when $\g', \g \cong \gl(\infty)$ and $M$ is any simple tensor $\g$-module (Theorems \ref{thmEmb1}, \ref{thmEmb2}, \ref{thmEmb3}). Since all other cases of embeddings follow the same ideas, we skip the proofs and list the end results in several tables in the Appendix. %A more detailed consideration of these cases can be found in \cite{Hr}.

{\bf Acknowledgements.} I want to thank my thesis adviser Ivan Penkov for introducing me to the topic and for his support and guidance during my PhD work and during the writing of this text. I would like also to acknowledge the DAAD (German Academic Exchange Service) for %funding my PhD studies.
giving me financial support during my PhD studies.

\section{Preliminaries}
\subsection{The classical locally finite Lie algebras}
The ground field is $\CC$. A countable-dimensional Lie algebra is called \textit{locally finite} if every finite subset of $\g$ is contained in a finite-dimensional subalgebra. Equivalently, $\g$ is locally finite if it admits an exhaustion $\g = \bigcup_{i \in \ZZ_{>0}}\g_i$ where
\begin{align*} %\label{eqExhaust}
\g_1 \subset \g_2 \subset \dots \subset \g_i \subset \dots
\end{align*}
is a sequence of nested finite-dimensional Lie algebras. %Such a sequence is called an \textit{exhaustion} of $\g$. 
The classical locally finite Lie algebras $\gl(\infty)$, $\sl(\infty)$, $\sp(\infty)$, and $\so(\infty)$ are defined respectively as $\gl(\infty) = \bigcup_{i \in \ZZ_{>0}}\gl(i)$, $\sl(\infty) = \bigcup_{i \in \ZZ_{>0}}\sl(i)$, $\sp(\infty) = \bigcup_{i \in \ZZ_{>0}}\sp(2i)$, and $\so(\infty) = \bigcup_{i \in \ZZ_{>0}}\so(i)$ via the natural inclusions $\gl(i) \subset \gl(i+1)$, $\sl(i) \subset \sl(i+1)$, $\sp(2i) \subset \sp(2i + 2)$, and $\so(i) \subset \so(i+1)$. 

Next, we give an equivalent definition of the above four Lie algebras. Let $V$ and $V_*$ be countable-dimensional vector spaces over $\CC$ and let $\left\langle \cdot,\cdot\right\rangle: V \times V_* \rightarrow \CC$ be a non-degenerate bilinear pairing. The vector space $V \otimes V_*$ is endowed with the structure of an associative algebra such that
$$
(v_1 \otimes w_1)(v_2 \otimes w_2) = \left\langle v_2, w_1 \right\rangle v_1\otimes w_2
$$
where $v_1, v_2 \in V$ and $w_1, w_2 \in V_*$. We denote by $\gl(V, V_*)$ the Lie algebra arising from the associative algebra $V \otimes V_*$, and by $\sl(V, V_*)$ we denote its commutator subalgebra $[\gl(V, V_*), \gl(V, V_*)]$. If $\left\langle \cdot,\cdot\right\rangle: V \times V \rightarrow \CC$ is an antisymmetric non-degenerate bilinear form, we define the Lie algebra $\gl(V, V)$ as above by taking $V_* = V$. In this case $S^2(V)$, the second symmetric power of $V$, is a Lie subalgebra of $\gl(V,V)$ and we denote it by $\sp(V)$. Similarly, if $\left\langle \cdot,\cdot\right\rangle: V \times V \rightarrow \CC$ is a symmetric non-degenerate bilinear form, we again define $\gl(V, V)$ by taking $V_* = V$ and then $\bigwedge^2(V)$ is a Lie subalgebra of $\gl(V,V)$, which we denote by $\so(V)$.

The vector spaces $V$ and $V_*$ are naturally modules over the Lie algebras defined above, such that $(v_1 \otimes w_1)\cdot v_2 =\left\langle v_2, w_1 \right\rangle v_1$ and $(v_2 \otimes w_2)\cdot w_1 = -\left\langle v_2, w_1 \right\rangle w_2$ for any $v_1, v_2 \in V$ and $w_1, w_2 \in V_*$. We call them respectively the \textit{natural} and the \textit{conatural representations}. In the cases of $\sp(V)$ and $\so(V)$ we have $V = V_*$. %Notice that in the cases of $\sp(V)$ and $\so(V)$ the non-degenerate bilinear form gives a canonical isomorphism between $V$ and $V_*$.

By a result of Mackey \cite{M}, there always exist dual bases $\{\xi_i\}_{i \in I}$ of $V$ and $\{\xi_i^*\}_{i \in I}$ of $V_*$ indexed by a countable set $I$, so that $\left\langle \xi_i, \xi_j^* \right\rangle = \delta_{ij}$. %In the cases of $\gl(V, V_*)$ and $\sl(V,V_*)$ these are bases respectively for $V$ and $V_*$, and in the cases of $\sp(V)$ and $\so(V)$ both are bases for $V$. 
Using these bases, we can identify $\gl(V,V_*)$ with the Lie algebra $\gl(\infty)$. Similarly, $\sl(V,V_*) \cong \sl(\infty)$, $\sp(V) \cong \sp(\infty)$, and $\so(V) \cong \so(\infty)$. 

In what follows let $\g \cong \gl(\infty), \sl(\infty), \sp(\infty)$, or $\so(\infty)$. We set $V^{\otimes (p,q)} = V^{\otimes p} \otimes V_*^{\otimes q}$. If $\g \cong \sp(\infty), \so(\infty)$, we always consider $q= 0$. %The modules $V^{\otimes (p,q)}$ where studied first in \cite{PSt}. 
It is shown in \cite{PSt} that $V^{\otimes (p,q)}$ is in general not semisimple and its socle filtration is explicitly described. 

The \textit{socle filtration} of any module $M$ over an algebra is defined as
\begin{align*}
0 \subset \soc M \subset \soc^{(2)}M \subset \cdots \subset \soc^{(r)} M \subset \cdots
\end{align*}
where $\soc M$, called the \textit{socle} of $M$, is the maximal semisimple submodule of $M$, or equivalently the sum of all simple submodules of $M$. The other terms in the filtration are defined inductively as follows: $\soc^{(r+1)}M = \pi_r^{-1}(\soc(M/\soc^{(r)} M))$, where $\pi_r: M \rightarrow M/\soc^{(r)} M$ is the natural projection. The semisimple modules $\overline{\soc}^{(r+1)}M := \soc^{(r+1)}M / \soc^{(r)}M$ are called the \textit{layers} of the socle filtration. We say that $M$ has \textit{finite Loewy length} $l$ if the socle filtration of $M$ is finite and $l = \min \{r | \soc^{(r)}M = M\}$. 

The following three properties of socle filtrations will be very useful in the sequel.
\begin{itemize}
\item If $N \subseteq M$, then for all $r$
\begin{align} \label{eqPropSoc1}
\soc^{(r)} N = (\soc^{(r)}M) \cap N .
\end{align}
\item If $M_1$ and $M_2$ are modules over the same algebra and $M = M_1 \cap M_2$, then
\begin{align} \label{eqPropSoc2}
\soc^{(r)} M = (\soc^{(r)}M_1) \cap (\soc^{(r)} M_2).
\end{align}
\item If $M$ and $N$ are any two modules over the same algebra, then 
\begin{align} \label{eqPropSoc} 
\soc^{(r)} (M \oplus N) = \soc^{(r)} M \oplus \soc^{(r)} N.
\end{align}
\end{itemize}

In what follows, if $M$ is a module over the Lie algebra $\g$ we will use the notation $\soc^{(r)}_{\g} M$ instead of $\soc^{(r)} M$.

When $\g \cong \gl(\infty)$, we set $V^{\{p,q\}} = \soc_{\g} V^{\otimes (p,q)}$. It is also the maximal semisimple submodule of $V^{\otimes (p,q)}$ for $\g \cong \sl(\infty)$ (\cite{PSt}). For $\g \cong \sp(\infty), \so(\infty)$ the maximal semisimple submodule of $V^{\otimes p}$ is denoted respectively by $V^{\left\langle p \right\rangle}$ and $V^{[p]}$. 

By definition, a $\g$-module M is called a \textit{tensor module} if it is a subquotient of a finite direct sum of copies of $\bigoplus_{p+q \leq r} V^{\otimes (p,q)}$ for some integer $r$. 
%By definition, a $\g$-module M is called a \textit{tensor module} if it is a subquotient of a finite direct sum of copies of $V^{\otimes (p,q)}$. 
If $M$ is simple, being a tensor module is equivalent to being a submodule of $V^{\otimes (p,q)}$ for some $p,q$ (\cite{PSt}). Moreover, it is shown in \cite{PSt} that there exists a choice of Borel subalgebra $\b$ in $\g$ such that all simple tensor modules are $\b$-highest weight modules.
For $\g \cong \gl(\infty), \sl(\infty)$  the simple tensor modules coincide. Their highest weights are given by pairs of non-negative integer partitions $(\lambda, \mu)$ and we denote them by $V_{\lambda, \mu}$. If $V_{\lambda, \mu} \subset V^{\otimes (p,q)}$, then $\vert \lambda \vert = p$ and $\vert \mu \vert  = q$. %The modules $V_{\lambda, \mu}$ are constructed explicitly in \cite{PSt} using a generalization of Weyl's construction for irreducible $\gl(n)$-modules (see \cite{FH}). 
For $\g \cong \sp(\infty), \so(\infty)$ the simple tensor modules are denoted respectively by $V_{\left\langle \lambda \right\rangle}$ and $V_{[\lambda]}$, where $\lambda$ is a non-negative integer partition. If $V_{\left\langle \lambda \right\rangle} \subset V^{\otimes p}$ or respectively $V_{[\lambda]} \subset V^{\otimes p}$, then $\vert \lambda \vert = p$. The simple tensor modules are constructed explicitly in \cite{PSt} using a generalization of Weyl's construction for irreducible $\gl(n)$-modules (see \cite{FH}). %It is shown in \cite{PSt} that
%\begin{align*}
% &V_{\left\langle \lambda \right\rangle} = V_{\lambda, 0} \cap V^{\left\langle p+q \right\rangle}, \qquad &V_{[\lambda]} = V_{\lambda, 0} \cap V^{[p+q]}.
%\end{align*}

Next, let $\g' \subset \g$ be an embedding of two classical locally finite Lie algebras. Then the following theorem holds.
\begin{thm} \cite{DP} \label{thmDP} Let $\g \cong \gl(\infty), \sl(\infty), \sp(\infty)$, or $\so(\infty)$ and let $\g'$ be a simple infinite-dimensional subalgebra of $\g$. Let $V$ and $V_*$ be respectively the natural and conatural represenations of $\g$. Similarly, let $V'$ and $V_*'$ be the natural and conatural representations of $\g'$. Then
\begin{equation} \label{eqGenForm}
\begin{aligned}
&\soc_{\g'}  V \cong kV' \oplus lV'_* \oplus N_a,  \quad &V/\soc_{\g'} V \cong N_b, \\
&\soc_{\g'}  V_* \cong lV' \oplus kV'_* \oplus N_c,  \quad &V_*/\soc_{\g'} V_* \cong N_d,
\end{aligned}
\end{equation}
where $k,l \in \ZZ_{\geq 0}$ such that at least one of them is in $\ZZ_{>0}$, and $N_a$, $N_b$, $N_c$, and $N_d$ are finite- or countable-dimensional trivial $\g'$-modules.
\end{thm}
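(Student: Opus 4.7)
The plan is to reduce to finite-dimensional branching and then pass to the direct limit. Since both $\g$ and $\g'$ are locally finite, I would fix compatible exhaustions $\g = \bigcup_n \g_n$ and $\g' = \bigcup_n \g'_n$ with $\g'_n \subset \g_n$ and each $\g_n$, $\g'_n$ a finite-dimensional classical simple Lie algebra. Via the Mackey dual bases recalled in the preliminaries, one arranges the natural representation of $\g$ to exhaust as $V = \bigcup_n V_n$ with $V_n$ the natural representation of $\g_n$, and similarly for $V'$. For each $n$, Weyl's semisimplicity theorem decomposes the finite-dimensional $\g'_n$-module $V_n$ into a direct sum of simple summands.

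The core step is to show that the only simple $\g'_n$-constituents appearing in $V_n$ are the natural $V'_n$, the conatural $(V'_n)^*$, and the trivial one-dimensional module, and that the multiplicities of the natural and conatural constituents stabilize to finite values $k$ and $l$ as $n \to \infty$. Dimension counting is the main ingredient: $\dim V_n$ is linear in the rank of $\g_n$, while every non-trivial simple $\g'_n$-module other than $V'_n$ and $(V'_n)^*$ has dimension growing at least quadratically in the rank of $\g'_n$. Hence no other simple summand fits in $V_n$ once the rank of $\g'_n$ is sufficiently large, and the multiplicities of $V'_n$ and $(V'_n)^*$ are forced to be bounded. At least one of $k, l$ is positive because $\g'$ acts faithfully on $V$.

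The trivial summands in $V_n|_{\g'_n}$ split into two classes. A vector $v$ in a trivial $\g'_n$-summand either remains fixed by all of $\g'$ in $V$, contributing a trivial $\g'$-submodule $N_a$ to $\soc_{\g'} V$; or some larger $\g'_m$ moves $v$ into a non-trivial orbit which by the previous step must sit inside a natural or conatural summand of $V_m|_{\g'_m}$. Vectors of the second kind are not in $\soc_{\g'} V$, but their images in $V/\soc_{\g'} V$ are $\g'$-invariant: any $x \in \g'$ acts through some $\g'_m$, and by construction $x \cdot v$ already lies in $\soc_{\g'} V$. Hence $V/\soc_{\g'} V$ is a trivial $\g'$-module $N_b$, and in particular the Loewy length of $V$ as a $\g'$-module is at most $2$. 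The analogous analysis for $V_*$ yields the second row of (\ref{eqGenForm}), with $V'$ and $V'_*$ exchanged by the duality between natural and conatural.

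The main obstacle is the dimension-counting step: one must go through the list of all embeddings $\g'_n \subset \g_n$ of classical finite-dimensional simple Lie algebras and verify case by case that $V_n|_{\g'_n}$ decomposes into only the three permitted types of simple summands, with moreover uniformly bounded multiplicities so that $k$ and $l$ are finite in the limit. A secondary technical point is confirming that the two classes of trivial-summand vectors described above give rise to intrinsically defined modules, independent of the choice of exhaustion; this is automatic once one observes that $N_a$ is the space of $\g'$-invariants of $V$, so $N_b$ is canonically the quotient of $V$ by the socle, and an analogous identification applies to $N_c$ and $N_d$.
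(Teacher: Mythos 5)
This theorem is quoted from \cite{DP}; the paper under review gives no proof of it, so your proposal can only be judged on its own terms. The overall shape of your argument --- exhaust $\g'$ and $\g$ compatibly, branch each $V_n$ over $\g'_n$ by Weyl's theorem, and sort the trivial constituents into those that survive inside $\soc_{\g'}V$ and those that only become invariant in the quotient --- is the right one. But the central step does not go through as you have set it up. You exclude simple $\g'_n$-constituents of $V_n$ other than $V'_n$, $(V'_n)^*$ and the trivial module by comparing $\dim V_n$ (linear in $\rk\, \g_n$) with the dimensions of all other simples (at least quadratic in $\rk\, \g'_n$). Nothing, however, ties $\rk\,\g_n$ to $\rk\,\g'_n$: the exhaustion of $\g$ may grow arbitrarily fast relative to that of $\g'$, so $\dim V_n$ can dwarf $(\rk\,\g'_n)^2$ and the count excludes nothing --- for instance $\bigwedge^2 V'_n$ fits comfortably inside $V_n$ as soon as $\dim V_n$ exceeds $\binom{\rk\,\g'_n}{2}$. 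For the same reason the claimed boundedness of the multiplicities of $V'_n$ and $(V'_n)^*$, hence the finiteness of $k$ and $l$, does not follow.

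The ingredient that actually forces the conclusion, and which your argument never invokes, is that $\g\subset V\otimes V_*$ consists of finite-rank operators on $V$ and on $V_*$, hence so does every element of $\g'$. A constituent such as $\bigwedge^2 V'$ (or any simple $\g'$-module other than the trivial, natural and conatural ones) is impossible because already a rank-one element of $\g'$ acts on it with infinite rank; an infinite multiplicity of $V'$ or $V'_*$ in $V$ is impossible for the same reason. This finite-rank condition is what replaces your dimension count and makes the limit argument work uniformly in the exhaustion. Relatedly, your proof that $V/\soc_{\g'}V$ is trivial rests on the assertion that ``by construction $x\cdot v$ already lies in $\soc_{\g'}V$'', which is exactly the statement $\g'\cdot V\subseteq\soc_{\g'}V$ you are trying to prove; justifying it again requires the finite-rank property together with an argument that the natural and conatural constituents appearing at the finite levels are compatible across the exhaustion and assemble into honest copies of $V'$ and $V'_*$ in the socle.
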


We denote $a = \dim N_a$, $b= \dim N_b$, $c = \dim N_c$, and $d = \dim N_d$.

Motivated by Theorem \ref{thmDP} we give the following definition.

\begin{df} An embedding $\g' \subset \g$ of two classical locally finite Lie algebras is said to be of general tensor type if it satisfies property (\ref{eqGenForm}). 
\end{df}

In view of Theorem \ref{thmDP}, all possible embeddings $\g' \subset \g$ are of general tensor type under the assumption that $\g' \not\cong \gl(\infty)$.

It is proven in \cite{PSe} that if $\g' \subset \g$ is an embedding of simple classical locally finite Lie algebras and $M$ is a simple tensor $\g$-module, then $M$ has finite Loewy length as a $\g'$-module and all simple constituents in the socle filtration of $M$ as a $\g'$-module are simple tensor $\g'$-modules. It is not difficult to show that this holds moreover for any embedding of general tensor type of any two classical locally finite Lie algebras (Proposition \ref{propSLExclude}). This fact and Theorem \ref{thmDP} are one of the first motivations for considering the branching problem in the context of classical locally finite Lie algebras. 

\subsection{Some finite-dimensional branching laws} \label{secFinBran}
An embedding $\f_1 \subset \f_2$ of finite-dimensional classical Lie algebras is called \textit{diagonal} if
$$
{V_2}_{ \downarrow \f_1} \cong \underbrace{V_1 \oplus \dots \oplus V_1}_{l}\oplus
											\underbrace{V_1^* \oplus \dots \oplus V_1^*}_{r}\oplus
											\underbrace{N_1 \oplus \dots \oplus N_1}_{z}
$$	
where $V_i$ is the natural $\f_i$-module ($i=1,2$), $V_1^*$ is dual to $V_1$, and $N_1$ is the trivial one-dimensional $\f_1$-module. The triple $(l,r,z)$ is called the \textit{signature} of the embedding. 

\begin{prop}\cite{Z} [Gelfand-Tsetlin rule] \label{propGT}
Consider an embedding $\gl(n-1) \rightarrow \gl(n)$ of signature $(1,0,1)$. Let $\lambda = (\lambda_1, \dots, \lambda_n)$ be a non-negative integer partition and let $V^n_{\lambda}$ denote the irreducible highest weight $\gl(n)$-module with highest weight $\lambda$. Then
$$
{V^{n}_{\lambda}}_{\downarrow \gl(n-1)} \cong \bigoplus_{\sigma}V^{n-1}_{\sigma},
$$
where $\sigma = (\sigma_1, \dots, \sigma_{n-1})$ runs over all integer partitions which interlace $\lambda$, i.e. such that
$$
\lambda_1 \geq \sigma_1 \geq \lambda_2 \geq \sigma_2 \geq \dots \geq \lambda_{n-1} \geq \sigma_{n-1} \geq \lambda_n.
$$
\end{prop}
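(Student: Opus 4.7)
The plan is to reduce the branching problem to a classical identity for Schur polynomials and then verify that identity by a combinatorial bijection using semistandard Young tableaux.

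First, I pass to characters via the Weyl character formula: the character of $V^n_\lambda$ in the variables $x_1, \ldots, x_n$ (dual to the diagonal basis $E_{11}, \ldots, E_{nn}$ of the Cartan subalgebra) is the Schur polynomial $s_\lambda(x_1, \ldots, x_n)$. The embedded subalgebra $\gl(n-1)$ has Cartan subalgebra spanned by $E_{11}, \ldots, E_{n-1,n-1}$, while $E_{nn}$ commutes with $\gl(n-1)$. The restriction of $V^n_\lambda$ to $\gl(n-1) \oplus \CC E_{nn}$ is semisimple, and each simple summand $V^{n-1}_\sigma$ automatically carries a single $E_{nn}$-eigenvalue equal to $|\lambda|-|\sigma|$, because $\gl(n-1)$ preserves the total weight. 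Therefore the proposition is equivalent to the polynomial identity
$$s_\lambda(x_1, \ldots, x_{n-1}, x_n) \;=\; \sum_\sigma x_n^{|\lambda|-|\sigma|} \, s_\sigma(x_1, \ldots, x_{n-1}),$$
where $\sigma$ ranges over partitions interlacing $\lambda$, each with multiplicity one.

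Second, I establish this identity combinatorially from the tableau expansion $s_\lambda(x_1, \ldots, x_n) = \sum_T x^T$, where $T$ runs over semistandard Young tableaux of shape $\lambda$ with entries in $\{1, \ldots, n\}$. Given such a $T$, the cells with entries in $\{1, \ldots, n-1\}$ form a semistandard subtableau $T'$ of some shape $\sigma \subseteq \lambda$, and the remaining cells of $\lambda/\sigma$ are exactly those occupied by the entry $n$. Column-strictness of $T$ forces this skew shape to have at most one box per column, i.e.~to be a horizontal strip.

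The key combinatorial point, which I would verify carefully, is the classical equivalence: $\lambda/\sigma$ is a horizontal strip if and only if $\lambda_i \geq \sigma_i \geq \lambda_{i+1}$ for every $i$. Once this is in hand, the sum $\sum_T x^T$ factors as $\sum_\sigma \bigl(\sum_{T'} x^{T'}\bigr) \, x_n^{|\lambda|-|\sigma|}$, which yields the desired identity with multiplicity one. I do not anticipate a real obstacle; the only delicate point is the horizontal-strip/interlacing equivalence. An alternative, purely algebraic route would apply the Jacobi--Trudi determinantal formula and expand in powers of $x_n$, producing the identity directly without tableaux.
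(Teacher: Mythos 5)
The paper does not prove this proposition at all: it is quoted from [Z] as a classical fact, so there is no argument of the author's to compare against. Your proof is correct and is the standard character-theoretic derivation of the Gelfand--Tsetlin rule. The reduction to the Schur-polynomial identity is sound: complete reducibility of the restriction follows because the full Cartan subalgebra of $\gl(n)$ (hence the center of $\gl(n-1)\oplus\CC E_{nn}$) acts semisimply and $\sl(n-1)$ is semisimple, and your observation that $E_{nn}$ acts on each copy of $V^{n-1}_\sigma$ by the scalar $|\lambda|-|\sigma|$ (via the central elements $\sum_{i\le n}E_{ii}$ and $\sum_{i\le n-1}E_{ii}$) lets one recover the multiplicities from the coefficients of the linearly independent functions $s_\sigma(x_1,\dots,x_{n-1})\,x_n^{k}$. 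The combinatorial step is the classical expansion $s_\lambda(x_1,\dots,x_n)=\sum_\sigma s_\sigma(x_1,\dots,x_{n-1})x_n^{|\lambda/\sigma|}$ over horizontal strips; the only point you gloss over is why the cells of $T$ with entries $\le n-1$ form a partition shape $\sigma$ at all (if row $i+1$ had more such cells than row $i$, the cell of $\lambda/\sigma$ directly above a cell of $\sigma$ would contain $n$ sitting above an entry $\le n-1$, violating column-strictness), and the horizontal-strip/interlacing equivalence you flag is indeed immediate: $\sigma\subseteq\lambda$ gives $\sigma_i\le\lambda_i$, and the absence of two strip boxes in one column is exactly $\sigma_i\ge\lambda_{i+1}$. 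So the argument goes through with no real gap.
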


The Gelfand-Tsetlin rule can be iterated to obtain a branching law for embeddings of $\gl(n)$ into $\gl(n+k)$. 
\begin{prop} \label{propIterGT}
Let $\gl(n) \rightarrow \gl(n+k)$ be an embedding of signature $(1,0,k)$. 
Then
$$
{V^{n+k}_{\lambda}}_{\downarrow \gl(n)} \cong \bigoplus_{\sigma} m^k_{\lambda, \sigma} V^n_{\sigma},
$$
where the multiplicity $m^k_{\lambda, \sigma}$ is the number of possible sequences of weights 
\begin{align*}
%\lambda_1 &&\lambda_2 &&\lambda_3& \dots &\lambda_{n+s-1}&& \lambda_{n+s}\\
%&\mu^1_1&&\mu^1_2 &\dots &&\mu^1_{n+s-2} &\mu^1_{n+s-1}& \\
&\lambda = (\lambda_1, \lambda_2, \dots, \lambda_{n+k-1}, \lambda_{n+k}),\\
&\sigma^1 = (\sigma^1_1, \sigma^1_2, \dots,  \sigma^1_{n+k-1}), \\
&\dots  \dots, \\
&\sigma^{k-1} = (\sigma^{k-1}_1, \dots, \sigma^{k-1}_{n+k-2}),\\
&\sigma = (\sigma_1, \dots, \sigma_n)
\end{align*}
such that each consecutive row interlaces the previous one.
\end{prop}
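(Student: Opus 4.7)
The plan is to realize the signature-$(1,0,k)$ embedding $\gl(n) \hookrightarrow \gl(n+k)$ as a composition
\[
\gl(n) \subset \gl(n+1) \subset \gl(n+2) \subset \cdots \subset \gl(n+k)
\]
of $k$ consecutive embeddings each of signature $(1,0,1)$, and then to iterate the Gelfand--Tsetlin rule of Proposition \ref{propGT}.

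First I would verify that the decomposition of the natural $\gl(n+k)$-module under the given embedding, namely $V^{n+k}{}_{\downarrow \gl(n)} \cong V^n \oplus N$ with $\dim N = k$, allows one to choose a chain of intermediate subalgebras $\gl(n+i) \subset \gl(n+k)$, $0 \leq i \leq k$, such that the natural module of $\gl(n+i+1)$ restricts to the natural module of $\gl(n+i)$ plus a one-dimensional trivial summand. Up to $\GL(n+k)$-conjugacy, any embedding of signature $(1,0,k)$ is the standard upper-left corner embedding, so such a chain exists and all intermediate embeddings have signature $(1,0,1)$. Since the isomorphism class of the restricted module depends only on the conjugacy class of the embedding, there is no loss of generality in assuming the standard realization.

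Next I would apply Proposition \ref{propGT} repeatedly. Setting $\sigma^0 = \lambda$ and $\sigma^k = \sigma$, a single application gives
\[
V^{n+k}_{\lambda}{}_{\downarrow \gl(n+k-1)} \cong \bigoplus_{\sigma^1} V^{n+k-1}_{\sigma^1},
\]
where $\sigma^1$ runs over partitions with $n+k-1$ parts interlacing $\lambda$. Restricting each summand in turn from $\gl(n+k-j)$ to $\gl(n+k-j-1)$ and using the additivity of restriction yields, after $k$ steps,
\[
V^{n+k}_\lambda{}_{\downarrow \gl(n)} \cong \bigoplus_{\sigma^1, \ldots, \sigma^{k-1}, \sigma} V^n_\sigma,
\]
the sum ranging over all sequences with $\sigma^{j+1}$ interlacing $\sigma^j$ for every $j$. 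Grouping isomorphic summands and counting, the multiplicity of $V^n_\sigma$ equals the number of such interlacing chains from $\lambda$ to $\sigma$, which is exactly $m^k_{\lambda,\sigma}$.

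The only subtle point is the reduction to the standard chain of signature-$(1,0,1)$ embeddings; once that is in place, the proof is a routine induction on $k$ with base case $k=1$ being Proposition \ref{propGT}. I do not expect any combinatorial difficulty, since the bookkeeping of interlacing sequences is precisely the definition of $m^k_{\lambda,\sigma}$.
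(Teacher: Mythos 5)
Your proof is correct and follows exactly the route the paper intends: the paper states Proposition \ref{propIterGT} as the result of iterating the Gelfand--Tsetlin rule through the chain $\gl(n)\subset\gl(n+1)\subset\cdots\subset\gl(n+k)$ and offers no further argument. Your additional care about reducing an arbitrary signature-$(1,0,k)$ embedding to the standard corner realization is a reasonable (if routine) point that the paper leaves implicit.
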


In what follows we will refer to the numbers $m^k_{\lambda, \sigma}$ as \textit{Gelfand-Tsetlin multiplicities}.

Next, we consider embeddings of signature $(k,0,0)$. Let $\lambda$ and $\mu$ be two non-negative integer partitions with $p$ and $q$ parts, where $p+q \leq n$.
Let $V^{n}_{\lambda, \mu}$ denote the irreducible $\gl(n)$-module with highest weight $(\lambda, \mu) = (\lambda_1, \dots \lambda_p, 0, \dots 0, -\mu_q, \dots, -\mu_1)$. We will derive the desired branching rule as an easy generalization of the results in \cite{HTW}. %in several steps.

Consider first the block-diagonal subalgebra $\gl(n) \oplus \gl(m) \subset \gl(n+m)$. By Theorem 2.2.1 in \cite{HTW} we have the following decomposition:

\begin{align*}
{V^{n+m}_{\lambda, \mu}}_{\downarrow \gl(n)\oplus\gl(m)} \cong \bigoplus_{\substack{\alpha^+, \beta^+, \alpha^-, \beta^-}}
c^{(\lambda, \mu)}_{(\alpha^+, \alpha^-), (\beta^+, \beta^-)} V^n_{\alpha^+, \alpha^-} \otimes V^m_{\beta^+, \beta^-},
\end{align*} 
where
\begin{align*}
c^{(\lambda, \mu)}_{(\alpha^+, \alpha^-), (\beta^+, \beta^-)} = \sum_{\gamma^+, \gamma^-, \delta}
c^{\lambda}_{\gamma^+ \delta} c^{\mu}_{\gamma^- \delta} c^{\gamma^+}_{\alpha^+ \beta^+}c^{\gamma^-}_{\alpha^- \beta^-},
\end{align*}
and the numbers $c^{\alpha}_{\beta, \gamma}$ are Littlewood-Richardson coefficients. Next, we consider the direct sum of $k > 2$ copies of $\gl(n)$ as a subalgebra of $\gl(kn)$ by block-diagonal inclusion. Then one can iterate the above branching rule to obtain the following:
\begin{align} \label{eqBranch1}
{V^{kn}_{\lambda, \mu}}_{\downarrow \gl(n) \oplus \dots \oplus \gl(n)} \cong \bigoplus_{\substack{\beta^+_1, \dots, \beta^+_k \\ \beta^-_1, \dots, \beta^-_k}}
C^{(\lambda, \mu)}_{(\beta^+_1, \dots, \beta^+_k) (\beta^-_1, \dots, \beta^-_k)} V^n_{\beta_1^+, \beta_1^-} \otimes \dots \otimes V^n_{\beta_k^+, \beta_k^-},
\end{align}
where 
\begin{align*}
&C^{(\lambda, \mu)}_{(\beta^+_1, \dots, \beta^+_k) (\beta^-_1, \dots, \beta^-_k)} =
\sum_{\substack{\alpha^+_1, \dots \alpha^+_{k-2} \\ \alpha^-_1, \dots \alpha^-_{k-2}}} c^{(\lambda, \mu)}_{(\alpha^+_1, \alpha^-_1)(\beta^+_1, \beta^-_1)}c^{(\alpha^+_1, \alpha^-_1)}_{(\alpha^+_2, \alpha^-_2)(\beta^+_2, \beta^-_2)} \dots c^{(\alpha^+_{k-3}, \alpha^-_{k-3})}_{(\alpha^+_{k-2}, \alpha^-_{k-2})(\beta^+_{k-2}, \beta^-_{k-2})}c^{(\alpha^+_{k-2}, \alpha^-_{k-2})}_{(\beta^+_{k-1}, \beta^-_{k-1})(\beta^+_k, \beta^-_k)}.
\end{align*}

The next step is to consider the $\gl(n) \oplus \gl(n)$-module $V^n_{\alpha^+, \alpha^-} \otimes V^n_{\beta^+, \beta^-}$. By Theorem 2.1.1 in \cite{HTW}, as a module over the subalgebra $\gl(n) = \{x\oplus x | x \in \gl(n)\}$, $V^n_{\alpha^+, \alpha^-} \otimes V^n_{\beta^+, \beta^-}$ has the decomposition
\begin{align*}
{V^n_{\alpha^+, \alpha^-} \otimes V^n_{\beta^+, \beta^-}}_{\downarrow \gl(n)} \cong \bigoplus_{\lambda', \mu'}
d_{(\alpha^+, \alpha^-), (\beta^+, \beta^-)}^{(\lambda', \mu')} V^n_{\lambda', \mu'},
\end{align*} 
where
\begin{align*}
d_{(\alpha^+, \alpha^-), (\beta^+, \beta^-)}^{(\lambda', \mu')} = \sum_{\substack{\alpha_1, \alpha_2, \beta_1, \beta_2 \\ \gamma_1, \gamma_2}} c^{\alpha^+}_{\alpha_1 \gamma_1}c^{\beta^-}_{\gamma_1 \beta_2}c^{\alpha^-}_{\beta_1 \gamma_2}c^{\beta^+}_{\gamma_2 \alpha_2} c^{\lambda'}_{\alpha_2 \alpha_1}  c^{\mu'}_{\beta_2 \beta_1}.
\end{align*}

Iterating this branching rule, we obtain for $k > 2$
\begin{align} \label{eqBranch2}
{V^n_{\beta_1^+, \beta_1^-} \otimes \dots \otimes V^n_{\beta_k^+, \beta_k^-}}_{\downarrow \gl(n)} \cong \bigoplus_{\lambda', \mu'}
D^{(\lambda', \mu')}_{(\beta^+_1, \dots, \beta^+_k) (\beta^-_1, \dots, \beta^-_k)} V^n_{\lambda', \mu'},
\end{align}
where
\begin{align*}
&D^{(\lambda, \mu)}_{(\beta^+_1, \dots, \beta^+_k) (\beta^-_1, \dots, \beta^-_k)} =
\sum_{\substack{\alpha^+_1, \dots \alpha^+_{k-2} \\ \alpha^-_1, \dots \alpha^-_{k-2}}} d^{(\alpha^+_1, \alpha^-_1)}_{(\beta^+_1, \beta^-_1)(\beta^+_2, \beta^-_2)}d^{(\alpha^+_2, \alpha^-_2)}_{(\alpha^+_1, \alpha^-_1)(\beta^+_3, \beta^-_3)} \dots d^{(\alpha^+_{k-2}, \alpha^-_{k-2})}_{(\alpha^+_{k-3}, \alpha^-_{k-3})(\beta^+_{k-1}, \beta^-_{k-1})}d^{(\lambda, \mu)}_{(\alpha^+_{k-2}, \alpha^-_{k-2})(\beta^+_k, \beta^-_k)}.
\end{align*}

The coefficients $C^{(\lambda, \mu)}_{(\beta^+_1, \dots, \beta^+_k) (\beta^-_1, \dots, \beta^-_k)}$ and  $D^{(\lambda', \mu')}_{(\beta^+_1, \dots, \beta^+_k) (\beta^-_1, \dots, \beta^-_k)}$ are defined only for $k > 2$. For convenience we extend these definitions to the case $k=2$ by setting
\begin{align*}
&C^{(\lambda, \mu)}_{(\beta^+_1,\beta^+_2) (\beta^-_1,\beta^-_2)} = c^{(\lambda, \mu)}_{(\beta^+_1, \beta^+_2), (\beta^-_1, \beta^-_2)},\\
&D^{(\lambda', \mu')}_{(\beta^+_1,\beta^+_2) (\beta^-_1,\beta^-_2)} = d^{(\lambda', \mu')}_{(\beta^+_1, \beta^+_2), (\beta^-_1, \beta^-_2)}.
\end{align*}

Now we can combine (\ref{eqBranch1}) and (\ref{eqBranch2}) in the following proposition.
\begin{prop} \label{propBranRuleDiag} Let $\gl(n) \subset \gl(kn)$ be an embedding of signature $(k,0,0)$. Then
\begin{align*}
{V^{kn}_{\lambda, \mu}}_{\downarrow \gl(n)} \cong  \bigoplus_{\substack{\beta^+_1, \dots, \beta^+_k \\ \beta^-_1, \dots, \beta^-_k \\ \lambda', \mu'}}
C^{(\lambda, \mu)}_{(\beta^+_1, \dots, \beta^+_k) (\beta^-_1, \dots, \beta^-_k)} D^{(\lambda', \mu')}_{(\beta^+_1, \dots, \beta^+_k) (\beta^-_1, \dots, \beta^-_k)} V^n_{\lambda', \mu'}. %\text{ for } k > 2,
\end{align*}
\end{prop}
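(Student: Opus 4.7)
The plan is to factor the embedding $\gl(n) \subset \gl(kn)$ of signature $(k,0,0)$ through the intermediate block-diagonal subalgebra $\gl(n)^{\oplus k} \subset \gl(kn)$. More precisely, I would show that the given embedding is, up to conjugation in $\GL(kn)$, the composition of the diagonal inclusion $\gl(n) \hookrightarrow \gl(n)^{\oplus k}$, $x \mapsto x \oplus \cdots \oplus x$, with the standard block-diagonal inclusion $\gl(n)^{\oplus k} \hookrightarrow \gl(kn)$. To verify this I would compute the restriction of $V^{kn}$ along the composition: under the block-diagonal step, $V^{kn}$ splits as the sum of $k$ natural modules of the summands, and under the diagonal each of those remains a single copy of $V^n$. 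Hence the composite has signature $(k,0,0)$, and since the conjugacy class of an embedding $\gl(n) \hookrightarrow \gl(kn)$ is determined by the isomorphism type of the restricted natural module, the two embeddings are conjugate and produce identical branching rules.

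First I would apply the branching rule (\ref{eqBranch1}) to decompose ${V^{kn}_{\lambda,\mu}}_{\downarrow \gl(n)^{\oplus k}}$ as a direct sum of outer tensor products $V^n_{\beta^+_1, \beta^-_1} \otimes \cdots \otimes V^n_{\beta^+_k, \beta^-_k}$ with multiplicities $C^{(\lambda,\mu)}_{(\beta^+_1, \ldots, \beta^+_k)(\beta^-_1, \ldots, \beta^-_k)}$. Next I would apply (\ref{eqBranch2}) to each such summand to restrict it further to the diagonal $\gl(n)$, obtaining simple modules $V^n_{\lambda', \mu'}$ with multiplicities $D^{(\lambda', \mu')}_{(\beta^+_1, \ldots, \beta^+_k)(\beta^-_1, \ldots, \beta^-_k)}$. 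Summing over the intermediate partitions $\beta^\pm_i$ and collecting coefficients yields the product $C \cdot D$ appearing in the statement.

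The only real obstacle is the identification of a general signature-$(k,0,0)$ embedding with the explicit composition of the diagonal and block-diagonal inclusions; once that is in place, the proposition reduces to mechanically stacking the two branching rules (\ref{eqBranch1}) and (\ref{eqBranch2}). The boundary case $k=2$ requires no separate treatment, because the conventions recorded immediately before the proposition extend the definitions of $C$ and $D$ to this case in exactly the form required for the formula to remain valid.
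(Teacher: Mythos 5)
Your proposal is correct and follows essentially the same route as the paper: the paper also obtains the proposition by restricting first to the block-diagonal $\gl(n)^{\oplus k}$ via (\ref{eqBranch1}) and then to the diagonal copy of $\gl(n)$ via (\ref{eqBranch2}), multiplying the coefficients $C$ and $D$. Your extra remark that any signature-$(k,0,0)$ embedding is conjugate to this explicit composition (since the $\gl(n)$-module structure on $\CC^{kn}$ determines the embedding up to $\GL(kn)$-conjugacy) is a point the paper leaves implicit, and it is a harmless and correct addition.
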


\section{Embeddings of general tensor type}

The main goal of this section is to prove Theorem \ref{thmMainCC} below. 

We start by introducing some useful notations. Let $\g', \g$ be any two classical locally finite Lie algebras and let $\phi : \g' \rightarrow \g$ be an embedding of general tensor type. %Let $I = \ZZ_{>0}$ for $\g \cong \gl(\infty), \sl(\infty)$ and $I = \ZZ \setminus \{0\}$ for $\g \cong \sp(\infty), \so(\infty)$. 
We define bases $\{v'_i\}_{i \in I}$ and $\{v'^*_i\}_{i \in I}$ for $V'$ and $V'_*$ in the following way. When $\g' \cong \gl(\infty), \sl(\infty)$, we take $I = \ZZ_{>0}$ and $\{v'_i\}_{i \in I}$ and $\{v'^*_i\}_{i \in I}$ to be an arbitrary pair of dual bases. When $\g' \cong \sp(\infty)$, we take $I = \ZZ \setminus \{0\}$ and $\{v'_i\}_{i \in I}$ to be a symplectic basis for $V'$, i.e. such that $\left\langle v'_i, v'_j \right\rangle = \sign(i) \delta_{i+j, 0}$. Then for the dual basis $\{v'^*_i\}_{i \in I}$ we obtain $v'^*_i = v'_{-i}$ and $v'^*_{-i} = -v'_i$ for any $i >0$. Similarly, when $\g' \cong \so(\infty)$, we take $I = \ZZ \setminus \{0\}$ and $\{v'_i\}_{i \in I}$ to be a basis of $V'$ with the property that $\left\langle v'_i, v'_j \right\rangle = \delta_{i+j, 0}$. Then for the dual basis $\{v'^*_i\}_{i \in I}$ we obtain $v'^*_i = v'_{-i}$ and $v'^*_{-i} = v'_i$ for any $i >0$.
 
Having fixed bases $\{v'_i\}_{i \in I}$ and $\{v'^*_i\}_{i \in I}$ as above, for any $i, j \in \ZZ_{>0}$ we define
\begin{align*} 
v'_i \tilde{\otimes} v'^*_j = \left\{ \begin{array}{ll}
         v'_i \otimes v'^*_j & \text{ if } \g' \cong  \gl(\infty), \sl(\infty) \\
        v'_i \otimes v'_{-j} + v'_{-j} \otimes v'_i &\text{ if } \g' \cong \sp(\infty) \\
				v'_i \otimes v'_{-j} - v'_{-j} \otimes v'_i &\text{ if } \g' \cong \so(\infty).\end{array} \right.
\end{align*}  

Then, $(v'_i \tilde{\otimes} v'^*_j) \cdot v'_s = \delta_{sj} v'_i$ and $(v'_i \tilde{\otimes} v'^*_j) \cdot v'^*_s = -\delta_{si} v'^*_j$.

Next, let 
\begin{equation} \label{eqDecSoc}
\begin{aligned}
&\soc_{\g'} V = \tilde{V}_1 \oplus \dots \oplus \tilde{V}_k \oplus \tilde{V}_{k+1} \oplus \dots \oplus \tilde{V}_{k+l} \oplus N_a, \\
&\soc_{\g'}V_* = \tilde{V}^*_{1} \oplus  \dots \oplus \tilde{V}^*_k \oplus \tilde{V}^*_{k+1} \oplus \dots \oplus \tilde{V}^*_{k+l} \oplus N_c,
\end{aligned}
\end{equation}
where $\tilde{V}_j \cong V'$ for $j = 1, \dots, k$ and $\tilde{V}_{k+j} \cong V'_*$ for $j = 1, \dots, l$. Similalry, $\tilde{V}^*_j \cong V'_*$ for $j = 1, \dots, k$ and $\tilde{V}^*_{k+j} \cong V'$ for $j = 1, \dots, l$. Let $\{v'_i\}_{i \in I}$, $\{v'^*_i\}_{i \in I}$ be as above. %a pair of dual bases of $V'$ and $V'_*$ satisfying the above conditions. 
We construct the following bases of $V$ and $V_*$ respectively:
\begin{equation*} %\label{eqBasis}
\begin{aligned}
&\{ \{v_i^j\}_{i \in I, j =1, \dots, k} \cup \{w_i^j\}_{i \in I, j =1, \dots, l} \cup \{z_i\}_{i \in I_a} \cup \{x_i\}_{i \in I_b} \}, \\
&\{ \{v_i^{j*}\}_{i \in I, j =1, \dots, k} \cup \{w_i^{j*}\}_{i \in I, j =1, \dots, l} \cup \{t_i\}_{i \in I_c} \cup \{y_i\}_{i \in I_d} \},
\end{aligned}
\end{equation*}
where $v^j_i$ denotes the image of $v'_i$ into $\tilde{V}_j$ for $j = 1, \dots, k$ and $w_i^j$ is the image of $v'^*_i$ into $\tilde{V}_{k+j}$ for $j = 1, \dots, l$. We proceed similarly with $v_i^{j*}$ and $w_i^{j*}$. Furthermore, $A = \{z_i\}_{i \in I_a}$  and $C = \{t_i\}_{i \in I_c}$ are bases respectively for $N_a$ and $N_c$ as submodules respectively of $V$ and $V_*$, where $I_a$  and $I_c$ are index sets with cardinalities $a = \dim N_a$ and $c = \dim N_c$. Similarly, $B = \{x_i\}_{i \in I_b}$ and $D = \{y_i\}_{i \in I_d}$ are bases of $N_b$ and $N_d$ considered as vector subspaces of $V$ and $V_*$, where $I_b$ and $I_d$ are index sets with cardinalities $b = \dim N_b$ and $d = \dim N_d$. 

Then we have the following proposition.

\begin{prop} \label{propBasis} Let $\g'$, $\g$, and $\phi$ be as above. We take decompositions of $\soc_{\g'}V$ and $\soc_{\g'} V_*$ as in (\ref{eqDecSoc}). Then $\tilde{V}_1 \oplus \dots \oplus \tilde{V}_{k+l}$ and $\tilde{V}^*_{1} \oplus  \dots \oplus \tilde{V}^*_{k+l}$ pair non-degenerately.
Furthermore, for each $i$ in the respective index set, $z_i$ pairs trivially with all elements from $\tilde{V}^*_{1} \oplus  \dots \oplus \tilde{V}^*_{k+l}$ and $x_i$ pairs non-degenerately with infinitely many elements from $\tilde{V}^*_{1} \oplus  \dots \oplus \tilde{V}^*_{k+l}$. Similarly, $t_i$ pairs trivially with all elements from  $\tilde{V}_1 \oplus \dots \oplus \tilde{V}_{k+l}$ and $y_i$ pairs non-degenerately with infinitely many elements from  $\tilde{V}_1 \oplus \dots \oplus \tilde{V}_{k+l}$.
\end{prop}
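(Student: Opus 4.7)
My plan rests on two key features of the pairing $\langle \cdot, \cdot \rangle \colon V \times V_* \to \CC$: it is $\g'$-invariant (inherited from $\g$-invariance via $\g' \subset \g$), and it is non-degenerate. I would combine these with Schur's lemma applied to the simple non-trivial $\g'$-modules $\tilde{V}_j, \tilde{V}^*_j$ and with the fact that $V/\soc_{\g'} V \cong N_b$ and $V_*/\soc_{\g'} V_* \cong N_d$ are trivial $\g'$-modules.

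First I would dispose of the Schur-type vanishings. For $z \in N_a$, the map $\tilde{V}^*_j \to \CC$, $w \mapsto \langle z, w\rangle$, satisfies $\langle z, \eta\cdot w\rangle = -\langle \eta\cdot z, w\rangle = 0$ for every $\eta\in\g'$ (since $N_a$ is trivial), hence it is a $\g'$-module homomorphism from a simple non-trivial module to the trivial module, and so vanishes by Schur. This gives the claim for each $z_i$, and the claim for $t_i$ is symmetric. I would then prove the non-degeneracy of the pairing between $U := \bigoplus_{j=1}^{k+l}\tilde{V}_j$ and $U^* := \bigoplus_{j=1}^{k+l}\tilde{V}^*_j$: if $u\in U$ satisfies $\langle u, U^*\rangle = 0$, combining with the Schur vanishing $\langle u, N_c\rangle = 0$ yields $\langle u, \soc_{\g'} V_*\rangle = 0$. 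Since $V_*/\soc_{\g'}V_*$ is trivial, $\g'\cdot V_* \subseteq \soc_{\g'}V_*$, so $\g'$-invariance gives $\langle \g'\cdot u, V_*\rangle = 0$, and non-degeneracy of the total pairing forces $\g'\cdot u = 0$. Decomposing $u$ into its components in the simple non-trivial $\tilde{V}_j$ shows each is a $\g'$-invariant vector in a simple non-trivial module, hence zero, so $u = 0$.

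The claims on $x_i$ and $y_i$ are more subtle. I would first show $\g'\cdot x_i$ has non-zero projection onto $U$: if instead $\g'\cdot x_i\subseteq N_a$, then $\psi\colon \eta\mapsto \eta\cdot x_i$ is a $1$-cocycle from $\g'$ to the trivial module $N_a$, vanishing on $[\g',\g']$. For $\g' \cong \sl(\infty),\sp(\infty),\so(\infty)$ one has $\g' = [\g',\g']$, so $\psi = 0$; then $\CC x_i$ is a trivial $\g'$-submodule and $x_i \in \soc_{\g'} V$, contradicting the choice of $x_i$ as a lift of a basis vector of $N_b$. Once $\g'\cdot x_i\cap U \neq 0$ is secured, pick $\eta\in\g'$ with $0\neq u := \eta\cdot x_i \in U$; the non-degeneracy of $U\times U^*$ produces $w\in U^*$ with $\langle u, w\rangle\neq 0$, and $\g'$-invariance yields $\langle x_i, \eta\cdot w\rangle\neq 0$ with $\eta\cdot w \in U^*$. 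Iterating over a basis of the countably infinite-dimensional simple $\g'$-submodule of $U$ generated by $u$ produces infinitely many basis vectors of $U^*$ paired non-trivially with $x_i$; the assertion for $y_i$ follows symmetrically by exchanging the roles of $V$ and $V_*$.

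The principal obstacle is the auxiliary claim $\g'\cdot x_i \not\subseteq N_a$ in the case $\g'\cong\gl(\infty)$, where perfectness is unavailable since $H^1(\gl(\infty),\CC)\neq 0$. A finer analysis using the explicit realization of $\phi$ as finitary operators in $\gl(V,V_*)$, combined with the compatibility constraints imposed by the condition of general tensor type, is required in that case; this is where the most care would be needed.
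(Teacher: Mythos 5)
Your treatment of the first two assertions is correct and is essentially the paper's own argument: the vanishing of $\langle N_a,\tilde V^*_j\rangle$ is exactly the invariance computation $\langle z,\phi(v'_i\tilde\otimes v'^*_n)\cdot w\rangle=-\langle\phi(v'_i\tilde\otimes v'^*_n)\cdot z,w\rangle=0$ phrased via Schur, and your proof of non-degeneracy of $U\times U^*$ (trivial pairing with $\soc_{\g'}V_*$ forces $\g'\cdot u=0$ by invariance, hence $u=0$ since $U$ has no trivial constituents) is the same contradiction the paper runs on basis vectors. For the third assertion the paper packages things differently — it subtracts a suitable $v\in U^*$ from $y_m$ and asserts that the resulting $\tilde y_m$, pairing trivially with $U$, satisfies $\phi(g')\cdot\tilde y_m=0$, hence lies in $\soc_{\g'}V_*$ — but both routes hinge on the identical key fact, namely that $\g'\cdot V\subseteq\tilde V_1\oplus\dots\oplus\tilde V_{k+l}$ (no component in $N_a$), equivalently your claim $\g'\cdot x_i\not\subseteq N_a$ together with its refinement that the $N_a$-component of $\g'\cdot x_i$ vanishes. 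Your cocycle argument establishes this cleanly when $\g'=[\g',\g']$. One smaller point: as the statement is used later (Proposition \ref{propDecompose} and the paper's own contradiction step), ``pairs non-degenerately with infinitely many elements'' must be read as ``is not of the form $\langle u_0,\cdot\rangle|_{U^*}$ for any $u_0\in U$''; your iteration produces infinitely many elements with non-zero pairing but does not directly rule out representability by a single $u_0$, whereas the paper's subtraction argument is tailored to exactly that reading. This is easily repaired by running your invariance argument on $x_i-u_0$.

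The gap you flag for $\g'\cong\gl(\infty)$ is genuine, and in fact it cannot be closed from condition (\ref{eqGenForm}) alone. Take $V=V'\oplus\CC z\oplus\CC x$, $V_*=V'_*\oplus\CC t\oplus\CC y$ with $\langle z,y\rangle=\langle x,t\rangle=1$ and all other new pairings zero, and set $\phi(A)=A+\tr(A)\,z\otimes t$ for $A\in V'\otimes V'_*$. This is an injective Lie algebra homomorphism $\gl(V',V'_*)\to\gl(V,V_*)$ (since $z\otimes t$ commutes with $V'\otimes V'_*$ and $\tr$ kills commutators), and one checks $\soc_{\g'}V=V'\oplus\CC z$, $\soc_{\g'}V_*=V'_*\oplus\CC t$, with trivial one-dimensional quotients, so the embedding is of general tensor type with $k=1$, $l=0$; yet $\g'\cdot x=\CC z\subseteq N_a$ and $x$ pairs trivially with all of $\tilde V^*_1=V'_*$. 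So for $\g'\cong\gl(\infty)$ the conclusion fails without a further hypothesis, and no ``finer analysis of the compatibility constraints'' will produce the missing step. You should be aware that the paper's proof conceals the same assumption: the assertion ``$\phi(g')\cdot\tilde y_m=0$'' uses that the left supports of the operators $\phi(g')$ lie in $U$, which is precisely $\g'\cdot V\subseteq U$. In the context of the paper this is harmless because the $\gl(\infty)$-subalgebras actually needed are the canonical extensions of $\sl(\infty)$-subalgebras constructed in Proposition \ref{propSLExclude}, for which $\g'\cdot V\subseteq U$ holds by the explicit formula for $\phi$; but as a standalone statement about arbitrary embeddings satisfying (\ref{eqGenForm}), the case $\g'\cong\gl(\infty)$ requires this as an additional assumption, and your proof should say so rather than defer it.
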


\begin{proof}
We fix decompositions of $\soc_{\g'}V$ and $\soc_{\g'}V_*$ as in (\ref{eqDecSoc}). Then for $i,j,m,n$ in the respective ranges we have
\begin{align*}
\left\langle v_i^j, t_m \right\rangle = \left\langle \phi(v'_i \tilde{\otimes} v'^*_n)\cdot v^j_n, t_m \right\rangle =  -\left\langle v^j_n, \phi(v'_i \tilde{\otimes} v'^*_n)\cdot t_m \right\rangle = 0.
\end{align*}

Suppose now that $v_i^j$ pairs trivially with all elements from $\soc_{\g'}V_*$. Then there exists $y_m$ such that $\left\langle v_i^j, y_m \right\rangle \neq 0$. But then
\begin{align*}
\left\langle v_i^j, y_m \right\rangle = \left\langle \phi(v'_i \tilde{\otimes} v'^*_i)\cdot v^j_i, y_m \right\rangle =  -\left\langle v^j_i, \phi(v'_i \tilde{\otimes} v'^*_i)\cdot y_m \right\rangle =
\left\langle v^j_i, v \right\rangle
\end{align*}
for some $v \in \soc_{\g'}V_*$. Hence, $\left\langle v^j_i, v \right\rangle \neq 0$, which contradicts our assumption. This implies that  $\tilde{V}_1 \oplus \dots \oplus \tilde{V}_{k+l}$ and $\tilde{V}^*_{1} \oplus  \dots \oplus \tilde{V}^*_{k+l}$ pair non-degenerately.

Suppose next that $y_m \in N_d$ pairs non-degenerately with finitely many elements from $\tilde{V}_1 \oplus \dots \oplus \tilde{V}_{k+l}$. Then, there exists $v \in \tilde{V}^*_1 \oplus \dots \oplus \tilde{V}^*_{k+l}$ such that $\tilde{y}_m = y_m - v$ pairs trivially with $\tilde{V}_1 \oplus \dots \oplus \tilde{V}_{k+l}$. But then $\phi(g') \cdot \tilde{y}_m = 0$ for all $g' \in \g'$ which contradicts with the definition of $N_d$. This proves the statement.
\end{proof}

We investigate further the properties of the decompositions of $\soc_{\g'}V$ and $\soc_{\g'} V_*$. Notice that
\begin{align*}
\left\langle v_i^j, v^{*n}_m \right\rangle = \left\langle \phi(v'_i \tilde{\otimes} v'^*_s)\cdot v^j_s, v^{*n}_m \right\rangle =  -\left\langle v^j_s, \phi(v'_i \tilde{\otimes} v'^*_s)\cdot v^{*n}_m \right\rangle = \delta_{im} \left\langle v^j_s, v^{*n}_s \right\rangle
\end{align*}
for all $i,j,m,n, s$. Hence, $\left\langle v_i^j, v^{*n}_m \right\rangle = 0$ for all $j,n$ and for $i \neq m$. In addition, 
\begin{align*} %\label{eqEqualTerms}
\left\langle v_i^j, v^{*n}_i  \right\rangle = \left\langle v^j_s, v^{*n}_s \right\rangle
\end{align*}
for all $i,j,n,s$.
In the same way, $\left\langle w_i^j, w^{*n}_m \right\rangle = 0$ for all $j,n$ and for $i \neq m$ and $\left\langle w_i^j, w^{*n}_i  \right\rangle = \left\langle v^j_s, v^{*n}_s \right\rangle$ for all $i,j, n, s$.

In a similar way we prove that  $\left\langle v_i^j, w^{*n}_m \right\rangle = 0$ and $\left\langle w_i^j, v^{*n}_m \right\rangle = 0$ for all $i,j,m,n$.

The above consderations and Proposition \ref{propBasis} imply that for any $j$ and $m$, the spaces $\tilde{V}_j$ and $\tilde{V}^*_m$ pair either trivially or non-degenerately. Furthermore, for each $j$ there exists $m$ such that $\tilde{V}_j$ and $\tilde{V}^*_m$ pair non-degenerately and $\{v_i^j\}$, $\{v^{*m}_i\}$ (resp., $\{w_i^j\}$, $\{w^{*m}_i\}$) is a pair of dual bases. %We can always choose the spaces $\tilde{V}_1, \dots,\tilde{V}_{k+l}$ and $\tilde{V}^*_{1},\dots, \tilde{V}^*_{k+l}$ such that for each $j$ there exists a unique $m$ with the property that $\tilde{V}_j$ and $\tilde{V}^*_m$ pair non-degenerately. 

The following proposition now holds.

\begin{prop} \label{propSLExclude} Let $\g$ be any classical locally finite Lie algebra and $M$ be a tensor $\g$-module.
\begin{itemize}
\item[(i)] Let $\g = \sl(V,V_*)$. Then any embedding $\sl(V', V'_*) \subset \g$ can be extended to an embedding $\gl(V', V'_*) \subset \gl(V, V_*)$ of general tensor type. Furthermore, the socle filtration of $M$ over $\sl(V', V'_*)$ is the same as the socle filtration of $M$ considered as a $\gl(V,V_*)$-module over $\gl(V',V'_*)$.
\item[(ii)] Let $\g \not\cong \sl(\infty)$. Then any embedding $\sl(V',V'_*) \subset \g$ can be extended to an embedding $\gl(V', V'_*) \subset \g$ of general tensor type. Furthermore,
$\soc_{\sl(V',V'_*)} ^{(r)} M = \soc_{\gl(V',V'_*)} ^{(r)} M$.
\end{itemize}
\end{prop}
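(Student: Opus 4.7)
The plan is to construct the extension $\tilde\phi$ by specifying the image of a single generator of $\gl(V',V'_*)/\sl(V',V'_*)$, verify that $\tilde\phi$ is of general tensor type, and then establish the equality of socle filtrations. Since $\gl(V',V'_*) = \sl(V',V'_*) \oplus \CC h$ with $h := v'_1 \otimes v'^*_1$, the extension is determined by $\tilde\phi(h)$. Using the decomposition (\ref{eqDecSoc}) and the dual-basis structure described after Proposition \ref{propBasis}, set
\[
\tilde\phi(h) := \sum_{j=1}^{k} v^j_1 \otimes v^{*j}_1 \;-\; \sum_{j=1}^{l} w^j_1 \otimes w^{*j}_1,
\]
viewed in $\gl(V,V_*)$ in part (i) or in $\g$ in part (ii); when $\g \cong \sp(\infty)$ or $\so(\infty)$, replace each summand by its symmetric or antisymmetric version so that $\tilde\phi(h)$ lands in $\sp(V) = S^2(V)$ or $\so(V) = \bigwedge^2(V)$. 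This is a rank-$(k+l)$ element. The commutation $[\tilde\phi(h), \phi(s)] = \phi([h,s])$ for $s \in \sl(V',V'_*)$ is checked directly on the basis of $V$, since by construction $\tilde\phi(h)$ implements the canonical $h$-action on every $\sl(V',V'_*)$-isotypic summand of $\soc_{\sl(V',V'_*)}V$ and acts trivially on $N_a$. For injectivity, $\ker \tilde\phi$ is an ideal of $\gl(V',V'_*)$; since the only ideals are $\{0\}$, $\sl(V',V'_*)$, $\gl(V',V'_*)$ and $\tilde\phi|_{\sl(V',V'_*)} = \phi$ is injective, one has $\ker\tilde\phi = \{0\}$.

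For the general tensor type property, each summand of $\soc_{\sl(V',V'_*)}V = kV' \oplus lV'_* \oplus N_a$ is $\tilde\phi(h)$-invariant by construction and remains simple (respectively trivial) as a $\gl(V',V'_*)$-module, so $\soc_{\gl(V',V'_*)}V = \soc_{\sl(V',V'_*)}V$. The quotient $V/\soc_{\gl(V',V'_*)}V \cong N_b$ is still trivial over $\gl(V',V'_*)$ because $\tilde\phi(h)$ sends lifts of $N_b$-basis elements into the socle. The analogous analysis for $V_*$ confirms that $\tilde\phi$ satisfies (\ref{eqGenForm}) with the same parameters $k, l, a, b, c, d$.

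For the socle filtration equality, the inclusion $\soc^{(r)}_{\gl(V',V'_*)}M \subseteq \soc^{(r)}_{\sl(V',V'_*)}M$ is immediate because a simple $\gl(V',V'_*)$-tensor module restricts to a simple $\sl(V',V'_*)$-tensor module. For the reverse inclusion, by induction on $r$ (passing to $M/\soc^{(r-1)}_{\sl(V',V'_*)}M$) it suffices to prove $\soc_{\sl(V',V'_*)}M \subseteq \soc_{\gl(V',V'_*)}M$; equivalently, $\soc_{\sl(V',V'_*)}M$ is both $\gl(V',V'_*)$-invariant and $\gl(V',V'_*)$-semisimple. Using property (\ref{eqPropSoc1}) and the fact that $M$ is a subquotient of some $V^{\otimes(p,q)}$, this reduces to the case $M = V^{\otimes(p,q)}$, whose socle filtration is described explicitly in \cite{PSt} with each layer a direct sum of simple tensor modules. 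The $\gl(V',V'_*)$-invariance is then the content of the verification that $\tilde\phi(h)$ preserves each layer of this explicit filtration, using the form of $\tilde\phi(h)$ constructed in the first step together with the preservation of the $\sl(V',V'_*)$-decomposition of each tensor factor $V, V_*$ established in step 2. Once invariance is granted, the $\gl(V',V'_*)$-semisimplicity follows by a Schur-type decomposition: on each $\sl(V',V'_*)$-isotypic component $V_{\lambda,\mu}^{\oplus n}$, the operator $\tilde\phi(h)$ has the form $h_0 \otimes \id + \id \otimes A$ with $A$ diagonalizable (since $\tilde\phi(h)$ acts semisimply on any tensor $\g$-module), yielding a decomposition into simple $\gl(V',V'_*)$-submodules. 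The main obstacle is precisely the explicit verification of $\gl(V',V'_*)$-invariance for the layers of $\soc^{(r)}_{\sl(V',V'_*)}V^{\otimes(p,q)}$; once this is established, all the remaining pieces assemble routinely.
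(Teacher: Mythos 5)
Your construction of the extension is exactly the paper's: the author also extends $\phi$ by sending $v'_1\otimes v'^*_1$ to $\sum_j v^j_1\otimes v^{*j}_1-\sum_j w^j_1\otimes w^{*j}_1$, after fixing a decomposition in which each $\tilde V_j$ pairs non-degenerately with a unique $\tilde V^*_m$ via dual bases. Up to that point the two arguments agree.

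Where you diverge is the equality of socle filtrations, and here your route has a genuine gap. First, the reduction to $M=V^{\otimes(p,q)}$ is not licensed by property (\ref{eqPropSoc1}): that property governs \emph{submodules}, whereas a tensor module is by definition a \emph{subquotient} of a finite direct sum of $V^{\otimes(p,q)}$'s, and the socle filtration of a quotient is not obtained by projecting the socle filtration of the ambient module. Second, and more seriously, the step you yourself flag as ``the main obstacle'' --- that $\tilde\phi(h)$ preserves each layer of $\soc^{(r)}_{\sl(V',V'_*)}V^{\otimes(p,q)}$ --- is precisely the content of the claim and is never carried out; note that the relevant filtration is the socle filtration over the \emph{small} algebra $\sl(V',V'_*)$ for an arbitrary embedding of general tensor type, i.e.\ the object the whole paper is devoted to computing, so ``assembles routinely'' is not a proof. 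The soft argument one might hope for (for $s\in\sl(V',V'_*)$ and a semisimple $\sl$-submodule $N$, the map $n\mapsto \tilde\phi(h)\cdot n + N$ is an $\sl$-homomorphism $N\to M/N$) only shows $\tilde\phi(h)\cdot N\subseteq \soc^{(2)}_{\sl}M$, not $\tilde\phi(h)\cdot N\subseteq N$, so invariance really does require input beyond the commutation relations. The final ``Schur-type'' semisimplicity step is likewise only sketched. The paper avoids all of this by a purely formal comparison of two chains of embeddings: from $\sl(V',V'_*)\subset\sl(V,V_*)\subset\gl(V,V_*)$ one gets $\soc^{(r)}_{\sl(V',V'_*)}M_{\sl}=\soc^{(r)}_{\sl(V',V'_*)}M_{\gl}$ (trivially, the restricted action is the same), and from $\sl(V',V'_*)\subset\gl(V',V'_*)\subset\gl(V,V_*)$ together with the fact from \cite{PSt} that the tensor modules (and simple tensor modules) of $\gl(V',V'_*)$ and $\sl(V',V'_*)$ coincide, one gets $\soc^{(r)}_{\sl(V',V'_*)}M_{\gl}=\soc^{(r)}_{\gl(V',V'_*)}M_{\gl}$. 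If you want to keep your computational route, you must actually prove the invariance of the layers (e.g.\ by showing $\tilde\phi(h)$ commutes with the contraction and projection maps defining them) and handle subquotients by a separate d\'evissage; otherwise, the citation-based argument is the efficient way out.
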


\begin{proof}
Part (i): We fix decompositions of the $\sl(V, V_*)$-modules $V$ and $V_*$ such that for each $j$ there exists a unique $m$ with the property that $\tilde{V}_j$ and $\tilde{V}^*_m$ pair non-degenerately. These exist due to the discussion above. Let $\phi$ denote the embedding $\sl(V', V'_*)\subset \g$. Then
\begin{align*}
\phi(v'_i\otimes v'^*_s) = v^1_i\otimes v^{1*}_s + \dots + v^k_i\otimes v^{k*}_s - w^1_s\otimes w^{1*}_i - \dots - w^l_s\otimes w^{l*}_i,
\end{align*}
\begin{align*}
\phi(v_i' \otimes v'^*_i - v_s' \otimes v'^*_s) = &v^1_i \otimes v^{1*}_i - v^1_s \otimes v^{1*}_s + \dots + v^k_i \otimes v^{k*}_i - v^k_s \otimes v^{k*}_s + \\
&w^1_s\otimes w^{1*}_s - w^1_i\otimes w^{1*}_i + \dots + w^l_s \otimes w^{l*}_s - w^l_i\otimes w^{l*}_i
\end{align*}
for all $i \neq s$. We can naturally extend $\phi$ to an embedding $\phi: \gl(V', V'_*) \rightarrow \gl(V, V_*)$ by setting
\begin{align*}
\phi(v'_1\otimes v'^*_1) = v^1_1\otimes v^{1*}_1 + \dots + v^k_1\otimes v^{k*}_1 - w^1_1\otimes w^{1*}_1 - \dots - w^l_1\otimes w^{l*}_1.
\end{align*}
 
Next, let $M$ be as above. From \cite{PSt} we know that the set of tensor $\gl(V, V_*)$-modules coincides with the set of tensor $\sl(V,V_*)$-modules. Then $M$ is both a tensor $\gl(V,V_*)$-module and tensor $\sl(V, V_*)$-module. We will use the notation $M_{\gl}$ (resp., $M_{\sl}$) to mark that we consider $M$ as a $\gl(V, V_*)$ (resp., $\sl(V, V_*)$) module. Then, on the one hand, we have the chain of embeddings
\begin{align*}
\sl(V',V'_*) \subset \sl(V,V_*) \subset \gl(V, V_*).
\end{align*}
This chain yields the following equality for every $r$:
\begin{align} \label{eqEq1}
\soc_{\sl(V',V'_*)}^{(r)} M_{\sl} = \soc_{\sl(V',V'_*)}^{(r)} M_{\gl}.
\end{align}

On the other hand, we have the chain of embeddings
\begin{align*}
\sl(V',V'_*) \subset \gl(V',V'_*) \subset \gl(V, V_*),
\end{align*}
which yields the equality
\begin{align} \label{eqEq2}
\soc_{\sl(V',V'_*)}^{(r)} M_{\gl} = \soc_{\gl(V',V'_*)}^{(r)} M_{\gl}.
\end{align}
From (\ref{eqEq1}) and (\ref{eqEq2}) the statement follows.

The proof of part (ii) is analogous to the proof of part (i). 
\end{proof}

As a result of Proposition \ref{propSLExclude} we can exclude from our considerations below all cases of embeddings which involve $\sl(\infty)$, since they are equivalent to the respective cases which involve $\gl(\infty)$.

\begin {prop} \label{propDecompose}
Let $\g' \subset \g$ be an embedding of general tensor type. Then there exist intermediate subalgebras $\g_1$ and $\g_2$ of the same type as $\g$
such that $\g' \subset \g_2 \subset \g_1 \subset \g $ and the following hold.
\begin{itemize}
\item[(1)] The embedding $\g_1 \subset \g$ has the properties:
\begin{align*}
&\soc_{\g_1} V \cong V_1 \oplus N_{a_1},  \quad &V/\soc_{\g_1} V \cong N_b, \\
&\soc_{\g_1} V_* \cong V_{1*} \oplus N_{c_1},  \quad &V_*/\soc_{\g_1} V_* \cong N_d,
\end{align*}
where 
\begin{align*}
&N_{a_1} = \{v \in N_a | \left\langle v, N_c \right\rangle = 0 \}  \text{ and }
N_{c_1} = \{w \in N_c | \left\langle N_a, w \right\rangle = 0 \}.
\end{align*}
%$N_{a_1}$ and $N_{c_1}$ are the largest submodules of $N_a$ and $N_c$ which pair trivially.
\item[(2)] The embedding $\g_2 \subset \g_1$ has the properties:
\begin{align*}
&V_1 \cong V_2 \oplus N_{a_2},  
&V_{1*} \cong V_{2*} \oplus N_{c_2}, 
\end{align*}
where $N_{a_2}$ and $N_{c_2}$ are such that $N_a = N_{a_1} \oplus N_{a_2}$ and $N_c = N_{c_1} \oplus N_{c_2}$.
\item[(3)] The embedding $\g' \subset \g_2$ has the properties:
\begin{align*}
&V_2 \cong kV' \oplus lV'_*,
&V_{2*} \cong lV' \oplus kV'_*. 
\end{align*}
\end{itemize}
\end{prop}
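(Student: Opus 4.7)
The plan is to construct $\g_1$ and $\g_2$ explicitly using the bases and the non-degeneracy analysis from Proposition \ref{propBasis} and the discussion following it. First I would set $V_2 := \tilde{V}_1 \oplus \cdots \oplus \tilde{V}_{k+l}$ and $V_{2*} := \tilde{V}^*_1 \oplus \cdots \oplus \tilde{V}^*_{k+l}$; by the analysis immediately before the proposition, these pair non-degenerately, with $\{v_i^j\}, \{v_i^{j*}\}$ and $\{w_i^j\}, \{w_i^{j*}\}$ as dual bases. Next I would define $N_{a_1}$ and $N_{c_1}$ as in the statement and pick vector-space complements $N_{a_2}, N_{c_2}$ with $N_a = N_{a_1} \oplus N_{a_2}$ and $N_c = N_{c_1} \oplus N_{c_2}$; I would arrange these complements so that the pairing restricts to a non-degenerate pairing between $N_{a_2}$ and $N_{c_2}$, while $N_{a_1}$ still pairs trivially with $N_{c_2}$ and $N_{c_1}$ with $N_{a_2}$ (this is possible since $N_{a_1}, N_{c_1}$ are by definition the left and right radicals of the pairing on $N_a \times N_c$). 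Setting $V_1 := V_2 \oplus N_{a_2}$ and $V_{1*} := V_{2*} \oplus N_{c_2}$, the pairing on $V_1 \times V_{1*}$ is then non-degenerate, since each summand pairs non-degenerately with its counterpart and the cross-pairings vanish by Proposition \ref{propBasis}.

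I would then define $\g_2$ and $\g_1$ as the Lie algebras of the same type as $\g$ associated with the pairs $(V_2, V_{2*})$ and $(V_1, V_{1*})$ via the construction of Section 2. The natural inclusions $V_2 \subset V_1 \subset V$ and $V_{2*} \subset V_{1*} \subset V_*$ yield Lie algebra embeddings $\g_2 \subset \g_1 \subset \g$. Using the explicit formula for $v'_i \tilde{\otimes} v'^*_j$, the image $\phi(\g')$ already lies inside $V_2 \otimes V_{2*}$, so that $\g' \subset \g_2$ under $\phi$.

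Property (3) will be immediate from the definition of $V_2, V_{2*}$ together with decomposition (\ref{eqDecSoc}). For property (2), I would observe that for $v \in N_{a_2}$ and any generator $u \otimes w \in \g_2$ with $w \in V_{2*}$, $(u \otimes w) \cdot v = \left\langle v, w \right\rangle u = 0$, since $N_a$ pairs trivially with $V_{2*}$ by Proposition \ref{propBasis}; hence $N_{a_2}$ is a trivial $\g_2$-module, $V_1 \cong V_2 \oplus N_{a_2}$ as $\g_2$-modules, and similarly for $V_{1*}$. For property (1), the same computation combined with the defining property of $N_{a_1}$ shows that $N_{a_1}$ pairs trivially with $V_{1*} = V_{2*} \oplus N_{c_2}$, so that $N_{a_1}$ is a trivial $\g_1$-submodule of $V$ and $V_1 \oplus N_{a_1}$ is a semisimple $\g_1$-submodule. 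Conversely, each $x_i \in B$ satisfies $(u \otimes w) \cdot x_i = \left\langle x_i, w \right\rangle u \in V_1$, so $\g_1 \cdot B \subset V_1$, and the image of $B$ in $V/(V_1 \oplus N_{a_1})$ is annihilated by $\g_1$ and equals $N_b$ as a vector space. This will force $\soc_{\g_1} V = V_1 \oplus N_{a_1}$ and $V/\soc_{\g_1} V \cong N_b$; the analogous argument handles $V_*$.

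The main obstacle is a uniform treatment of the $\sp(\infty)$ and $\so(\infty)$ cases, where $V = V_*$ and the constructions must respect a given antisymmetric or symmetric form. Here I would choose $N_{a_2}$ so that the form restricted to it is non-degenerate (always possible since the radical of a bilinear form on a countable-dimensional space splits off a non-degenerate complement of the correct parity), and then define $\g_2, \g_1$ as $\sp$ or $\so$ of the resulting non-degenerate forms on $V_2, V_1$. With these choices in place, the computation above transfers without change.
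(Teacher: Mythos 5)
Your construction is essentially identical to the paper's proof: the paper also sets $V_2 = \tilde{V}_1 \oplus \dots \oplus \tilde{V}_{k+l}$, splits the basis of $N_a$ (resp.\ $N_c$) into the part pairing trivially with $N_c$ (resp.\ $N_a$) and its complement, sets $V_1 = V_2 \oplus N_{a_2}$, and defines $\g_1, \g_2$ as the algebras of the same type attached to these non-degenerately paired spaces. Your write-up actually supplies more of the verification of properties (1)--(3) (and of the non-degeneracy of the restricted form in the $\sp/\so$ cases) than the paper does, so this is fine.
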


\begin{proof}
%We first consider the case when $\g'$ and $\g$ are isomorphic to $\gl(\infty)$. 
We take decompositions of $\soc_{\g'}V$ and $\soc_{\g'}V_*$ as in Proposition \ref{propBasis}. Set
\begin{align*}
&V_2 =  \tilde{V}_1 \oplus \dots \oplus \tilde{V}_{k+l}, \quad 
&V_{2*} = \tilde{V}^*_{1} \oplus  \dots \oplus \tilde{V}^*_{k+l}.
\end{align*}
Then Proposition \ref{propBasis} yields that $V_2$ and $V_{2*}$ pair non-degenerately. If $\g \cong \gl(\infty)$ we put $\g_2 = V_2 \otimes V_{2*}$. Similarly, %if $\g \cong \sl(\infty)$ then $\g_2 = [V_2 \otimes V_{2*}, V_2 \otimes V_{2*}]$, 
if $\g \cong \sp(\infty)$ then $\g_2 = S^2V$, and if $\g \cong \so(\infty)$ then $\g_2  = \bigwedge^2 V$. 

%Let $\{v_i\}_{i \in I}$ and $\{ v_i^*\}_{i \in I}$ be a pair of dual bases of $V_2$ and $V_{2*}$ indexed by a countable index set $I$. 
Next, let $A$ and $C$ be as above. % = \{z_j\}_{j \in I_a}$  and $C = \{t_j\}_{j \in I_c}$ be bases respectively of $N_a$ and $N_c$, where $I_a$  and $I_c$ are index sets with cardinalities $a = \dim N_a$ and $c = \dim N_c$. %Similarly let $B = \{x_j\}_{j \in I_b}$ and $D = \{y_j\}_{j \in I_d}$ be bases of $N_b$ and $N_d$ considered as vector subspaces of $V$ and $V_*$, where $I_b$ and $I_d$ are index sets with cardinalities $b = \dim N_b$ and $d = \dim N_d$. 
Let $A_1 = \{z'_i\}_{i \in I_{a_1}}$ consist of those elements in $A$ which pair trivially with all elements in $C$, and analogously let $C_1 = \{t'_i\}_{i \in I_{c_1}}$ consist of the elements in $C$ which pair trivially with all vectors in $A$. Let $A_2 = A \setminus A_1$ and $C_2 = C \setminus C_1$ and denote their elements respectively with $z''_i$ and $t''_i$ and their index sets with $I_{a_2}$ and $I_{c_2}$. 
Now set
\begin{align*}
&V_1 = V_2 \oplus \mathrm{span} \{z''_i\}_{i\in I_{a_2}}, %\oplus \mathrm{span} \{x_j\}_{j \in I_b} \\
&V_{1*} = V_{2*} \oplus \mathrm{span} \{t''_i\}_{i \in I_{c_2}}. %\oplus \mathrm{span} \{y_j\}_{j \in I_d}
\end{align*}
Then $V_1$ and $V_{1*}$ pair non-degenerately and if $\g \cong \gl(\infty)$ we put $\g_1 = V_1 \otimes V_{1*}$. We proceed analogously in the other cases.

The Lie algebras $\g_1$ and $\g_2$ satisfy the required properties. 
%In the case of $\sl(\infty)$ we follow the same construction, but take $\g_2= [V_2 \otimes V_{2*}, V_2 \otimes V_{2*}]$, and $\g_1= [V_1 \otimes V_{1*}, V_1 \otimes V_{1*}]$.
\end{proof}

We now state the main theorem for embeddings of general tensor type.

\begin{thm} \label{thmMainCC} Let $\g'\subset \g$ be an embedding of general tensor type. Let $M = V^{\{p,q\}}$ for $\g \cong \gl(\infty)$, $M = V^{\left\langle p \right\rangle}$ for $\g \cong \sp(\infty)$, and $M = V^{[p]}$ for $\g \cong \so(\infty)$. Let $\g_1$ and $\g_2$ be intermediate subalgebras of the same type as $\g$, such that the chain $\g'\subset \g_2 \subset \g_1 \subset \g$ satisfies the conditions of Proposition \ref{propDecompose}. Then, for any $N \subseteq M$, 
\begin{align*}
\overline{\soc}_{\g'}^{(r+1)} N \cong \bigoplus_{l+m+n=r} \overline{\soc}_{\g'}^{(l+1)} (\overline{\soc}_{\g_2}^{(m+1)}(\overline{\soc}_{\g_1}^{(n+1)} N)).
\end{align*}
\end{thm}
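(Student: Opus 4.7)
Proof proposal.

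The plan is to prove the formula by induction on $r$, tracking how the three socle filtrations (over $\g_1$, $\g_2$, and $\g'$) refine one another along the chain $\g' \subset \g_2 \subset \g_1 \subset \g$. The structural input comes from Proposition \ref{propDecompose}, which splits the embedding $\g' \subset \g$ into three pieces of particularly simple form, and from Proposition \ref{propBasis}, which provides compatible bases for $V$ and $V_*$.

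For the base case $r = 0$, the formula reduces to $\soc_{\g'} N \cong \soc_{\g'}(\soc_{\g_2}(\soc_{\g_1} N))$, and by property (\ref{eqPropSoc1}) this amounts to the inclusion $\soc_{\g'} N \subseteq \soc_{\g_2}(\soc_{\g_1} N)$. I would verify it at the level of individual simple $\g'$-submodules of $N$: each is a simple tensor $\g'$-module (by the finite Loewy length result of \cite{PSe}), and the Weyl-type construction of \cite{PSt} together with property (\ref{eqPropSoc2}) reduces the claim to the case of the natural module $V$ itself. For $V$, the three types of simple $\g'$-submodules ($V'$, $V'_*$, and trivial one-dimensional) all lie in $\soc_{\g_1} V = V_1 \oplus N_{a_1}$; further, using $V_1 = V_2 \oplus N_{a_2}$ and $V_2 \cong kV'\oplus lV'_*$ places them inside $\soc_{\g_2}(\soc_{\g_1} V)$.

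For the inductive step I would introduce the auxiliary filtration
\[
F_r := \sum_{\substack{l,m,n \geq 0 \\ l+m+n = r-1}} \soc^{(l+1)}_{\g'}\bigl(\soc^{(m+1)}_{\g_2}\bigl(\soc^{(n+1)}_{\g_1} N\bigr)\bigr),
\]
and first show $F_r \subseteq \soc^{(r)}_{\g'} N$: each summand is an iterated $\g'$-socle of $\g'$-Loewy length at most $r$, and by property (\ref{eqPropSoc}) the direct sum of such submodules has $\g'$-Loewy length at most $r$; since $F_r$ is a quotient of this direct sum under the addition map and $\g'$-Loewy length is non-increasing under quotients, it follows that $F_r$ itself has $\g'$-Loewy length at most $r$, whence $F_r \subseteq \soc^{(r)}_{\g'} N$. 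For the reverse inclusion $\soc^{(r)}_{\g'} N \subseteq F_r$ I proceed by induction: assuming $F_{r-1} = \soc^{(r-1)}_{\g'} N$, an argument parallel to the base case, applied to the successive quotient $N / \soc^{(r-1)}_{\g'} N$ (which by (\ref{eqPropSoc1}) inherits the required structure from $N$), identifies the $\g'$-socle of this quotient with $F_r / F_{r-1}$. The direct-sum decomposition of the associated graded layer stated in the theorem is then obtained by combining (\ref{eqPropSoc}) with the block separation of simple constituents ensured by Proposition \ref{propBasis}.

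The main obstacle is the base-case inclusion $\soc_{\g'} N \subseteq \soc_{\g_2}(\soc_{\g_1} N)$, which rests on the highly specific structure of the three embeddings given by Proposition \ref{propDecompose}. In particular, one must rule out trivial $\g'$-submodules of $N$ that might appear to originate from the non-trivial $\g_1$-cokernel part $N_b$, and Proposition \ref{propBasis}---through its determination of how the basis vectors of $N_a$, $N_b$, $N_c$, $N_d$ pair with the other components under the bilinear form---is the precise tool that enables this.
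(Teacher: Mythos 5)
There is a genuine gap, and it sits at the heart of your inductive step. Your filtration $F_r$ is built from \emph{iterated socles of submodules}, $\soc^{(l+1)}_{\g'}\bigl(\soc^{(m+1)}_{\g_2}\bigl(\soc^{(n+1)}_{\g_1} N\bigr)\bigr)$, whereas the quantity the theorem computes involves iterated socles of \emph{layers} (subquotients), $\overline{\soc}^{(m+1)}_{\g_2}\bigl(\overline{\soc}^{(n+1)}_{\g_1}N\bigr)$, and these do not match up for free. The easy inclusion $F_r\subseteq\soc^{(r)}_{\g'}N$ holds (it uses only the outermost $\g'$-socle), but the reverse inclusion requires a lifting statement: an element $u$ whose image in $\overline{\soc}^{(n+1)}_{\g_1}N$ lies in $\soc^{(m+1)}_{\g_2}$ of that quotient need not decompose as $v+w$ with $v\in\soc^{(m+1)}_{\g_2}(\soc^{(n+1)}_{\g_1}N)$ and $w$ of smaller depth, because the $\g_2$-module generated by $u$ inside $\soc^{(n+1)}_{\g_1}N$ may be a nonsplit extension of the semisimple module generated by its image. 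This is exactly why the paper does not use nested socles of submodules: it introduces the pullback modules $S^{(n+1,m+1,l+1)}$ (defined via the projections $\pi_n$, $\pi_{mn}$ in Lemma \ref{lemma3MainCC}) and, more importantly, realizes the whole filtration as $S^{(r)}=\bigcap\ker\bigl(\tilde{\Phi}\circ\Phi\circ(L\otimes M)\bigr)$ for explicit linear maps, then verifies by hand (Lemmas \ref{lemma1MainCC}--\ref{lemma3MainCC}) that this is the $\g'$-socle filtration and that the required lifts exist, using the concrete monomial structure of tensor modules and specific elements $\phi(v'_i\tilde{\otimes}v'^*_j)\in\g'$. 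Your proposal assumes the lifting implicitly and supplies no substitute for this work.

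Two further steps are also unjustified as stated. First, the reduction of the base case to the natural module $V$: the $\g'$-socle of $N\subseteq V^{\otimes(p,q)}$ is not determined by $\soc_{\g'}V$ alone, since for the type II and type III pieces the first socle layer is cut out by kernels of contractions (compare Propositions \ref{propEmb2Appr1} and \ref{propGenEmb3}), not by $(\soc_{\g'}V)^{\otimes p}\otimes(\soc_{\g'}V_*)^{\otimes q}$; property (\ref{eqPropSoc2}) does not bridge this. Second, in the inductive step you propose to ``apply the base case to $N/\soc^{(r-1)}_{\g'}N$'', but that quotient is only a $\g'$-module --- it carries no $\g_1$- or $\g_2$-action, since $\soc^{(r-1)}_{\g'}N$ is not $\g_1$- or $\g_2$-stable --- so the base-case statement, which compares socles over all three algebras, cannot be applied to it. Each of these points needs the explicit kernel description of the filtration to be repaired.
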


Motivated by Proposition \ref{propDecompose} and Theorem \ref{thmMainCC} we give the following definition.

\begin{df} \label{defEmbs} Let $\g',\g$ be a pair of classical locally finite Lie algebras.
\begin{itemize}
\item[(i)] An embedding $\g' \subset \g$ is said to be of type I if $\g' \cong \g$ and 
\begin{align*}
&\soc_{\g'} V \cong V' \oplus N_{a},  \quad &V/\soc_{\g'} V \cong N_b, \\
&\soc_{\g'} V_* \cong V'_{*} \oplus N_{c},  \quad &V_*/\soc_{\g'} V_* \cong N_d,
\end{align*}
where $N_{a}$ and $N_{c}$ pair trivially.
\item[(ii)] An embedding $\g' \subset \g$ is said to be of type II if $\g' \cong \g$ and 
\begin{align*}
&V \cong V' \oplus N_{a},  
&V_{*} \cong V'_{*} \oplus N_{c}.
\end{align*}
\item[(iii)] An embedding $\g' \subset \g$ is said to be of type III if 
\begin{align*}
&V \cong kV' \oplus lV'_*,
&V_{*} \cong lV' \oplus kV'_*. 
\end{align*}
\end{itemize}
\end{df}

Before proving Theorem \ref{thmMainCC} we need some preparational work. More precisely, in the following paragraphs we describe the socle filtrations over $\g'$ of the $\g$-modules $V^{\{p,q\}}$, $V^{\left\langle p \right\rangle}$, and $V^{[p]}$ when we have embeddings of type I, II, III. When $\g' \subset \g$ is of type I or II, to simplify notations we will consider that $V' \subset V$ and $V'_* \subset V_*$ and we will not make use of the notations $\tilde{V}_j, \tilde{V}^*_j$, introduced in (\ref{eqDecSoc}).

Let first $\g' \subset \g$ be an embedding of type I. We have the following short exact sequence of $\g'$-modules
\begin{align} \label{eqSES1}
0 \rightarrow V'\oplus N_a \stackrel{i}{\rightarrow} V \stackrel{f}{\rightarrow} N_b \rightarrow 0.
\end{align}
For each index $1 \leq i \leq p$ we define
\begin{align*}
&L_i : V^{\otimes p} \rightarrow V^{\otimes i-1}\otimes N_b \otimes V^{\otimes p-i} %\cong V^{\otimes p-1} \otimes N_b
,\\
&L_i = \id \otimes \id \otimes \dots \otimes \id \otimes f \otimes \id \otimes \dots \otimes \id,
\end{align*}
where $f$ appears at the $i$-th position in the tensor product. In what follows, without further reference we will use the standard isomorphism $V^{\otimes i-1}\otimes N_b \otimes V^{\otimes p-i} \cong V^{\otimes p-1} \otimes N_b$ and consider $L_i : V^{\otimes p} \rightarrow V^{\otimes p-1} \otimes N_b$.

Similarly, for each collection of indices $1 \leq i_1 < i_2 < \dots < i_k \leq p$ we define
\begin{align*}
&L_{i_1, \dots, i_k} : V^{\otimes p} \rightarrow %V^{\otimes i_1 - 1} \otimes N_b \otimes V^{\otimes i_2 - i_1 -1} \otimes N_b \otimes \dots \otimes V^{\otimes p-i_k} \cong 
V^{\otimes p-k}\otimes N_b^{\otimes k}, \\
&L_{i_1, \dots, i_k} = \id \otimes \dots \otimes \id \otimes f \otimes \id \otimes \dots \otimes \id \otimes f \otimes \id \dots \otimes \id,
\end{align*}
where the map $f$ appears at positions $i_1$ through $i_k$ in the tensor product. When $k = 0$, we set $L_0 = \id$.

In a similar way, when $\g',\g \cong \gl(\infty)$ we take also the short exact sequence
$$
0 \rightarrow V_*'\oplus N_c \stackrel{i}{\rightarrow} V_* \stackrel{g}{\rightarrow} N_d \rightarrow 0
$$
and define
\begin{align*}
&M_{i_1, \dots, i_k} : V_*^{\otimes q} \rightarrow %V_*^{\otimes i_1 - 1} \otimes N_d \otimes V_*^{\otimes i_2 - i_1 -1} \otimes N_d \otimes \dots \otimes V_*^{\otimes q-i_k} \cong 
V_*^{\otimes q-k}\otimes N_d^{\otimes k}, \\
&M_{i_1, \dots, i_k} = \id \otimes \dots \otimes \id \otimes g \otimes \id \otimes \dots \otimes \id \otimes g \otimes \id \otimes \dots \otimes \id,
\end{align*}
where the map $g$ appears at positions $i_1$ through $i_k$ in the tensor product. When $k = 0$, we set $M_0 = \id$.

Next, let $M = V^{\{p,q\}}$ (respectively, $V^{\left\langle p \right\rangle}$, $V^{[p]}$). Then we have the following proposition.

\begin{prop} \label{propEmb1}
Let $\g' \subset \g$ be an embedding of type I. Then
\begin{itemize}
\item [(i)] if $\g \cong \gl(\infty)$
\begin{align*}
\soc_{\g'}^{(r)} M = \bigcap_{\substack{n_1 +n_2 = r \\ i_1 < \dots < i_{n_1} \\ j_1 < \dots <j_{n_2} }} \ker ({L_{i_1, \dots, i_{n_1}} \otimes M_{j_1, \dots, j_{n_2}} }_{\vert M}) =  \bigcap_{\substack{n_1 +n_2 = r \\ i_1 < \dots < i_{n_1} \\ j_1 < \dots <j_{n_2} }} (\ker {L_{i_1, \dots, i_{n_1}} \otimes M_{j_1, \dots, j_{n_2}} }) \cap M;
\end{align*}
\item [(ii)] if $\g \cong \sp(\infty), \so(\infty)$
\begin{align*}
\soc_{\g'}^{(r)} M = \bigcap_{i_1 < \dots < i_{r}} \ker ({L_{i_1, \dots, i_{r}}}_{\vert M}) = \bigcap_{i_1 < \dots < i_{r}} (\ker {L_{i_1, \dots, i_{r}}}) \cap M.
\end{align*}
\end{itemize}
\end{prop}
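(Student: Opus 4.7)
The equality of the two right-hand expressions in each case is immediate from $\ker(\Phi|_M) = M \cap \ker\Phi$ for $\Phi = L_{i_1,\ldots,i_{n_1}} \otimes M_{j_1,\ldots,j_{n_2}}$ (respectively $\Phi = L_{i_1,\ldots,i_r}$ in case (ii)). I write $F_r^M$ for this right-hand side and aim to show $F_r^M = \soc^{(r)}_{\g'} M$. The argument proceeds by induction on $p+q$ (respectively on $p$ for $\g \cong \sp(\infty), \so(\infty)$); the base case $p+q=0$ is immediate.

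\textbf{Key preliminary.} The restricted image $\Phi(M)$ lies in the smaller traceless module $V^{\{p-n_1, q-n_2\}} \otimes N_b^{\otimes n_1} \otimes N_d^{\otimes n_2}$ (respectively $V^{\left\langle p-r\right\rangle} \otimes N_b^{\otimes r}$ or $V^{[p-r]} \otimes N_b^{\otimes r}$). Indeed, any $\g$-contraction of the first tensor factor of the target lifts to a $\g$-contraction of $V^{\otimes(p,q)}$ at positions disjoint from those touched by $\Phi$, and the two commute; since $M$ is the common kernel of all $\g$-contractions, $\Phi(M)$ is annihilated by all contractions of the target. By the inductive hypothesis, the $\g'$-socle filtration of each smaller module is given by the analogous intersection $F_\bullet$; in particular its $\g'$-socle $F_1$ is semisimple.

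\textbf{Induction on $r$ with $p,q$ fixed.} The case $r=0$ is trivial. Assume $F_r^M = \soc^{(r)}_{\g'} M$. Consider the $\g'$-equivariant injection
\begin{equation*}
\Psi_r : M / F_r^M \hookrightarrow \bigoplus_\Phi V^{\{p-n_1, q-n_2\}} \otimes N_b^{\otimes n_1} \otimes N_d^{\otimes n_2},\qquad T + F_r^M \mapsto \bigoplus_\Phi \Phi(T),
\end{equation*}
indexed by the tuples with $n_1+n_2=r$. Since $\Psi_r$ is injective, $\soc(M/F_r^M) = \Psi_r^{-1}(\soc(\text{target}))$. The key preliminary together with the inductive hypothesis identifies the socle of the target as $\bigoplus F_1^{V^{\{p-n_1,q-n_2\}}} \otimes N_b^{\otimes n_1}\otimes N_d^{\otimes n_2}$, namely the subspace cut out by all single projections $L_{i'}$, $M_{j'}$ applied to the first tensor factor of the target. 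Using the same commutation as in the preliminary, the condition that every such single projection annihilates $\Psi_r(T + F_r^M)$ is equivalent to the condition that $T$ be killed by every operator $L\otimes M$ on $M$ with $r+1$ total projections, that is, $T \in F_{r+1}^M$. Therefore $\soc(M/F_r^M) = F_{r+1}^M / F_r^M$, and combining with $F_r^M = \soc^{(r)}_{\g'}M$ yields $\soc^{(r+1)}_{\g'}M = F_{r+1}^M$, completing the induction.

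\textbf{Main obstacle.} Essentially the entire content of the proof is concentrated in the key preliminary: it is the only place that uses the hypothesis that $M$ is the $\g$-socle of $V^{\otimes(p,q)}$ (or $V^{\otimes p}$) rather than an arbitrary $\g'$-submodule; in fact if one replaces $M$ by the ambient $V^{\otimes(p,q)}$ the statement fails, since then $F_1$ already has a nontrivial $\g'$-socle filtration. Once the key preliminary is in place, the inductive bookkeeping -- tracking how adjoining one more single projection on the target corresponds to increasing the number of projections on the source by one -- is conceptually routine, but requires care with reindexing since positions are removed by earlier projections and the residual indexing on the target does not match the original indexing on $V^{\otimes(p,q)}$.
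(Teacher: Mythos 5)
Your reduction of the step $r\to r+1$ to the computation of the socle of the target of $\Psi_r$ is sound for $r\ge 1$, and it is in fact the same mechanism the paper uses to prove semisimplicity of the layers (the injection of $S^{(r+1)}\cap V^{\{p,q\}}/S^{(r)}\cap V^{\{p,q\}}$ into $\bigoplus N_b^{\otimes n_1}\otimes N_d^{\otimes n_2}\otimes(S^{(1)}_{p-n_1,q-n_2}\cap V^{\{p-n_1,q-n_2\}})$), upgraded via property (\ref{eqPropSoc1}) to compute the socle exactly. But the induction is circular precisely at the step $r=0\to r=1$: there the only tuple has $n_1=n_2=0$, so $\Psi_0$ is the identity map of $M$ onto a target with the \emph{same} $p+q$, and neither the induction on $r$ nor the induction on $p+q$ gives you the socle of that target. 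The identification $\soc_{\g'}M=F_1^M=M\cap(\soc_{\g'}V)^{\otimes p}\otimes(\soc_{\g'}V_*)^{\otimes q}$ is the actual content of the proposition and is nowhere established in your argument.

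The symptom that confirms the gap: your proof never uses the type I hypothesis that $N_a$ and $N_c$ pair trivially, yet the statement is false without it when $q>0$ (see the remark following the proposition). For instance, with $p=q=1$ and $\langle z,t\rangle=1$ for $z\in N_a$, $t\in N_c$, the element $z\otimes t-v'_1\otimes v'^*_1$ lies in $F_1^{V^{\{1,1\}}}$ but generates a non-semisimple $\g'$-module, so $F_1^M\ne\soc_{\g'}M$. Two separate things must be proved at this level, and the paper does both: first, that $F_1^M$ is semisimple, which follows from the decomposition (\ref{eqS1}) into summands $N_a^{\otimes k}\otimes N_c^{\otimes l}\otimes V'^{\{p-k,q-l\}}$ — this is exactly where $\langle N_a,N_c\rangle=0$ enters, since otherwise the traceless condition couples the $N_a$- and $N_c$-slots to the $V'$- and $V'_*$-slots; and second, that $\soc_{\g'}M$ is no larger than $F_1^M$, which the paper gets from its step (a), an explicit construction of elements $g=v'_{k}\otimes v'^*_{l}\in U(\g')$ that strictly lower the filtration degree of any $u\notin S^{(r+1)}$. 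Your "main obstacle" paragraph locates the difficulty in the commutation of contractions with the projections $L,M$; that lemma is needed (and correct), but it is not where the work is.
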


\begin{proof}
Part (i): Denote 
$$S^{(r)}_{p,q} = \bigcap_{\substack{n_1 +n_2 = r \\ i_1 < \dots < i_{n_1} \\ j_1 < \dots <j_{n_2} }} \ker {L_{i_1, \dots, i_{n_1}} \otimes M_{j_1, \dots, j_{n_2}} }.
$$
Then $S_{p,q}^{(r+1)}$ consists of all elements $u \in V^{\otimes (p,q)}$ such that each monomial in the expression of $u$ contains at most $n_1$ entries outside of $\soc_{\g'} V$ and at most $n_2$ entries outside of $\soc_{\g'}V_*$ for some $n_1 + n_2 = r$. In particular, $S_{p,q}^{(1)} = (\soc_{\g'}V)^{\otimes p} \otimes (\soc_{\g'}V_*)^{\otimes q}$. To show that $S^{(r)}_{p,q} = \soc_{\g'}^{(r)} V^{\{p,q\}}$ we need to check two properties:
\begin{itemize}
\item[(a)] for any $ u \in (S_{p,q}^{(r+2)} \cap V^{\{p,q\}}) \setminus (S_{p,q}^{(r+1)} \cap V^{\{p,q\}})$ there exists $g \in U(\g')$ (where $U(\g')$ is the universal enveloping algebra of $\g'$) such that $g \cdot u \in (S_{p,q}^{(r+1)} \cap V^{\{p,q\}}) \setminus (S_{p,q}^{(r)} \cap V^{\{p,q\}})$;
\item[(b)] the quotient $(S_{p,q}^{(r+1)} \cap V^{\{p,q\}}) / (S_{p,q}^{(r)} \cap V^{\{p,q\}})$ is semisimple for any $r$.
\end{itemize}

Proof of (b): Notice that
\begin{align*}
S_{p,q}^{(1)} \cap V^{\{p,q\}} = ((V' \oplus N_a)^{\otimes p} \otimes (V'_* \oplus N_c)^{\otimes q}) \cap V^{\{p,q\}}.
\end{align*}
Since $N_a$ and $N_c$ pair trivially, this implies that
\begin{align} \label{eqS1} 
S_{p,q}^{(1)} \cap V^{\{p,q\}} \cong \bigoplus_{k=0}^p \bigoplus_{l = 0}^q \binom{p}{k} \binom {q}{l} N_a^{\otimes k} \otimes N_c^{\otimes l} \otimes V'^{\{p-k,q-l\}},
\end{align}
which is a semisimple $\g'$-module. 

Next, for any fixed $n_1$ and $n_2$ and sequences $i_1 < \dots <i_{n_1}$ and $j_1 < \dots < j_{n_2}$
\begin{align*}
L_{i_1, \dots, i_{n_1}} \otimes M_{j_1, \dots, j_{n_2}} : V^{\otimes (p,q)} \rightarrow N_b^{\otimes n_1} \otimes N_d^{\otimes n_2} \otimes V^{\otimes (p-n_1, q-n_2)}.
\end{align*} 
%If we take the restriction of $L_{i_1, \dots, i_{n_1}} \otimes M_{j_1, \dots, j_{n_2}}$ to the submodule $S_{p,q}^{(r+1)}$ we obtain
%\begin{align*}
%L_{i_1, \dots, i_{n_1}} \otimes M_{j_1, \dots, j_{n_2}} : S_{p,q}^{(r+1)} \rightarrow N_b^{\otimes n_1} \otimes N_d^{\otimes n_2} \otimes S_{p-n_1,q-n_2}^{(1)}.
%\end{align*}
Moreover, if $n_1 + n_2 = r$ for some $r$ then 
\begin{align*}
L_{i_1, \dots, i_{n_1}} \otimes M_{j_1, \dots, j_{n_2}} (S_{p,q}^{(r+1)}) \subseteq N_b^{\otimes n_1} \otimes N_d^{\otimes n_2} \otimes S_{p-n_1,q-n_2}^{(1)}.
\end{align*}
Hence,
\begin{align*}
L_{i_1, \dots, i_{n_1}} \otimes M_{j_1, \dots, j_{n_2}} (S_{p,q}^{(r+1)} \cap V^{\{p,q\}}) \subseteq N_b^{\otimes n_1} \otimes N_d^{\otimes n_2} \otimes (S_{p-n_1,q-n_2}^{(1)} \cap V^{\{p-n_1,q-n_2\}}).
\end{align*}

Thus for any $r$ we have a well-defined injective homomorphism of $\g'$-modules
\begin{align*}
\bigoplus_{\substack{n_1 + n_2 = r \\ i_1 < \dots < i_{n_1} \\ j_1 < \dots < j_{n_2}}} L_{i_1, \dots, i_{n_1}} \otimes M_{j_1, \dots, j_{n_2}} : S_{p,q}^{(r+1)} \cap V^{\{p,q\}}/S_{p,q}^{(r)} \cap V^{\{p,q\}}  \rightarrow \\
\bigoplus_{\substack{n_1 + n_2 = r \\ i_1 < \dots < i_{n_1} \\ j_1 < \dots < j_{n_2}}} N_b^{\otimes n_1} \otimes N_d^{\otimes n_2} \otimes (S_{p-n_1,q-n_2}^{(1)} \cap V^{\{p-n_1,q-n_2\}}).
\end{align*}
%is a well-defined injective homomorphism of $\g'$-modules. 
This proves (b).

Proof of (a): As before we take bases of $V$ and $V_*$ of the form
\begin{align*}
& \{\{v'_i\}_{i \in I} \cup \{z_i\}_{i \in I_a} \cup \{x_i\}_{i \in I_b}\}, \quad & \{\{v'^*_i\}_{i \in I} \cup \{t_i\}_{i \in I_c} \cup \{y_i\}_{i \in I_d}\}.
\end{align*}
We take also $u \in S^{(r+2)}_{p,q} \cap V^{\{p,q\}}$ and we write it as $u = \sum_{i=1}^N a_{i} u_i \otimes u^*_i$, where each $u_i$ is a monomial in $V^{\otimes p}$ and each $u^*_i$ is a monomial in $V_*^{\otimes q}$. Then we take $g_1 = v'_{k_1} \otimes v'^*_{l_1}$ such that $v'_{k_1}$ satisfies: $\left\langle v'_{k_1}, y_{j}\right\rangle \neq 0$ for at least one $y_{j}$ that enters the monomial $u^*_1$; $ \left\langle v'_{k_1}, v'^*_{j}\right\rangle = 0$ for all $v'^*_{j}$ in $u^*_1$; and $v'_{k_1}$ does not appear in any $u_i$. Similarly, $v'^*_{l_1}$ satisfies:  $\left\langle x_{j}, v'^*_{l_1}\right\rangle \neq 0$ for at least one $x_{j}$ that enters the monomial $u_1$; $\left\langle v'_{j}, v'^*_{l_1}\right\rangle = 0$ for all $v'_{j}$ in $u_1$; and $v'^*_{l_1}$ does not appear in any $u_i^*$. Then 
$$
g_1 \cdot (u_1 \otimes u_1^*) \in (S_{p,q}^{(r+1)} \cap V^{\{p,q\}}) \setminus (S_{p,q}^{(r)} \cap V^{\{p,q\}}).
$$
After defining inductively $g_1, \dots g_{i-1}$, if $(g_{i-1} \circ \dots \circ g_{1}) \cdot u_i\otimes u_i^*$ is not in the desired space, we define $g_i$ in the same way as we defined $g_1$. Finally we set $g = g_N \circ \dots \circ g_1$ and then $g \cdot u$ satisfies (a). 

The proof of part (ii) is analogous (and actually simpler).
\end{proof}

{\bf Remark.} Notice that if $q=0$ then (\ref{eqS1}) can be rewritten as
\begin{align*}
S_{p,0}^{(1)} \cong \bigoplus_{k=0}^p \binom{p}{k} N_a^{\otimes k} \otimes V'^{\otimes (p-k)},
\end{align*}
which is semisimple without any conditions on the pairing between $N_a$ and $N_c$. Thus, for the $\gl(\infty)$-modules $V^{\otimes p}$ and $V^{\otimes q}$ Proposition \ref{propEmb1} holds even without the requirement that $N_a$ and $N_c$ pair trivially.

\begin{coro} \label{corSocEmb1} If $\g', \g \cong \gl(\infty)$ then 
\begin{align*}
\soc_{\g'}^{(r+1)} V^{\{p,q\}} =(\sum_{n_1 + n_2 = r} (\soc_{\g'}^{(n_1 + 1)} V^{\otimes p}) \otimes (\soc_{\g'}^{(n_2 + 1)} V_*^{\otimes q})) \cap V^{\{p,q\}}.
\end{align*}
\end{coro}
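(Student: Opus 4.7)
The plan is to apply the explicit kernel description of Proposition \ref{propEmb1}(i) to all three modules $V^{\{p,q\}}$, $V^{\otimes p}$, and $V_*^{\otimes q}$, and then to reduce the statement to a straightforward combinatorial identity in a chosen vector-space decomposition.

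First I would apply property (\ref{eqPropSoc1}) to the inclusion $V^{\{p,q\}} \subseteq V^{\otimes (p,q)}$, which reduces the corollary to proving the identity
$$\soc^{(r+1)}_{\g'} V^{\otimes (p,q)} = \sum_{n_1 + n_2 = r} \soc^{(n_1+1)}_{\g'} V^{\otimes p} \otimes \soc^{(n_2+1)}_{\g'} V_*^{\otimes q}$$
inside $V^{\otimes (p,q)}$, since intersecting both sides with $V^{\{p,q\}}$ then yields the claim. By the Remark following Proposition \ref{propEmb1}, the proposition applies to $V^{\otimes p}$ and $V_*^{\otimes q}$ (the cases $q=0$ and $p=0$) without any hypothesis on the pairing of $N_a$ with $N_c$, so all three socle subspaces above are described explicitly as intersections of kernels of the maps $L_{i_1,\dots,i_k}$ and $M_{j_1,\dots,j_k}$.

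Next I would fix vector-space complements $V = \soc_{\g'} V \oplus X$ with $X = \mathrm{span}\{x_i\}_{i \in I_b}$ and $V_* = \soc_{\g'} V_* \oplus Y$ with $Y = \mathrm{span}\{y_i\}_{i \in I_d}$, so that $f|_X$ and $g|_Y$ are isomorphisms onto $N_b$ and $N_d$ respectively. This yields a direct-sum decomposition
$$V^{\otimes (p,q)} = \bigoplus_{\substack{I \subseteq \{1,\dots,p\} \\ J \subseteq \{1,\dots,q\}}} W_{I,J},$$
where $W_{I,J}$ is spanned by simple tensors carrying an $X$-factor at each position in $I$, a $\soc_{\g'} V$-factor at the remaining positions of the $V^{\otimes p}$-part, a $Y$-factor at each position in $J$, and a $\soc_{\g'} V_*$-factor elsewhere in the $V_*^{\otimes q}$-part. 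A direct inspection shows that $L_{I'} \otimes M_{J'}$ vanishes on $W_{I,J}$ unless $I' \subseteq I$ and $J' \subseteq J$, and is injective on $W_{I,J}$ when those inclusions hold. Combining with the kernel description of the socle filtration, it follows that $\soc^{(k)}_{\g'} V^{\otimes (p,q)} = \bigoplus_{|I|+|J| < k} W_{I,J}$, and that the $\soc^{(n_i+1)}_{\g'}$ of the pure tensor powers admit the analogous one-sided descriptions.

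Finally, the tensor factorization yields $\soc^{(n_1+1)}_{\g'} V^{\otimes p} \otimes \soc^{(n_2+1)}_{\g'} V_*^{\otimes q} = \bigoplus_{|I| \leq n_1,\, |J| \leq n_2} W_{I,J}$, and summing over all $(n_1, n_2)$ with $n_1 + n_2 = r$ produces precisely $\bigoplus_{|I|+|J| \leq r} W_{I,J}$, which coincides with $\soc^{(r+1)}_{\g'} V^{\otimes (p,q)}$. The main bookkeeping point, and the one requiring a bit of care, is tracking the off-by-one between the Loewy superscripts ($r+1$, $n_i+1$) and the number of kernel factors used in Proposition \ref{propEmb1}; once this is sorted out, the result reduces to the elementary identity $\{(I,J) : |I|+|J|\leq r\} = \bigcup_{n_1+n_2=r} \{(I,J) : |I|\leq n_1,\, |J|\leq n_2\}$, and no real obstacle remains.
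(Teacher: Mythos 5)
Your computational core (the decomposition into the subspaces $W_{I,J}$ and the closing combinatorial identity) is exactly the content of the paper's one-line proof, but the reduction in your first step contains a genuine error. The intermediate identity
\begin{align*}
\soc^{(r+1)}_{\g'} V^{\otimes (p,q)} = \sum_{n_1 + n_2 = r} \soc^{(n_1+1)}_{\g'} V^{\otimes p} \otimes \soc^{(n_2+1)}_{\g'} V_*^{\otimes q}
\end{align*}
is false for $p,q\geq 1$: already for $r=0$ the right-hand side equals $(\soc_{\g'}V)^{\otimes p}\otimes(\soc_{\g'}V_*)^{\otimes q}$, which contains $V'^{\otimes p}\otimes V_*'^{\otimes q}$ as a direct summand and is therefore not a semisimple $\g'$-module, so it cannot equal $\soc_{\g'}V^{\otimes(p,q)}$. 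For the same reason your assertion that $\soc^{(k)}_{\g'}V^{\otimes(p,q)}=\bigoplus_{|I|+|J|<k}W_{I,J}$ fails: Proposition \ref{propEmb1} and the Remark after it give kernel descriptions of the socle filtration of $V^{\{p,q\}}$ (only after intersecting with $V^{\{p,q\}}$, and only under the type I hypothesis that $N_a$ and $N_c$ pair trivially) and of the one-sided powers $V^{\otimes p}$ and $V_*^{\otimes q}$; there is no such description for the full $V^{\otimes(p,q)}$, whose socle filtration is genuinely finer than the kernel filtration.

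The repair is small but conceptually necessary: work with the kernel filtration $S^{(k)}_{p,q}=\bigcap\ker(L_{i_1,\dots,i_{n_1}}\otimes M_{j_1,\dots,j_{n_2}})$ from the proof of Proposition \ref{propEmb1} instead of with $\soc^{(k)}_{\g'}V^{\otimes(p,q)}$. Your $W_{I,J}$ bookkeeping correctly shows $S^{(k)}_{p,q}=\bigoplus_{|I|+|J|<k}W_{I,J}$, the one-sided socle filtrations really are $\bigoplus_{|I|\leq n_1}W_{I}$ and $\bigoplus_{|J|\leq n_2}W_{J}$ by the Remark, and your final combinatorial identity then yields exactly the kernel identity recorded in the paper's proof, namely that $S^{(r+1)}_{p,q}$ equals the sum of tensor products of the one-sided socle filtrations. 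Intersecting with $V^{\{p,q\}}$ and invoking Proposition \ref{propEmb1}(i) (rather than property (\ref{eqPropSoc1}) applied to $V^{\{p,q\}}\subseteq V^{\otimes(p,q)}$) then gives the corollary; this also makes visible where the trivial pairing of $N_a$ and $N_c$ is actually used.
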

\begin{proof}
 Notice that
\begin{align*}
\bigcap_{\substack{n_1 +n_2 = r+1 \\ i_1 < \dots < i_{n_1} \\ j_1 < \dots <j_{n_2} }} \ker (L_{i_1, \dots, i_{n_1}} \otimes M_{j_1, \dots, j_{n_2}}) = \sum_{n_1 + n_2 = r}(\bigcap_{i_1 < \dots < i_{n_1 + 1}} \ker L_{i_1, \dots, i_{n_1 + 1}}) \otimes (\bigcap_{j_1 < \dots < j_{n_2 + 1}} M_{j_1, \dots, j_{n_2 +1}}).
\end{align*}
This proves the statement.
\end{proof}

Next, let $\g' \subset \g$ be an embedding of type II. Then $N_a$ and $N_c$ have the same dimension and there is a non-degenerate bilinear pairing between them, which is the restriction of the bilinear pairing between $V$ and $V_*$. Let $\{z_i\}_{i \in I_a}$ and $\{t_i\}_{i \in I_a}$ be a pair of dual bases for $N_a$ and $N_c$. We define a new bilinear form:
\begin{align*}
\left\langle \cdot, \cdot \right\rangle_t : V \times V_* \rightarrow \CC 
\end{align*}
such that $\left\langle z_i, t_j\right\rangle_t = \delta_{ij}$ and for all other pairs of basis elements from $V\times V_*$ the bilinear form is trivial. If $\g \cong \gl(\infty)$, for any pair of indices $I = (i,j)$ with $i \in \{1,2, \dots, p\}$ and $j \in \{1,2, \dots, q\}$, we define the contraction 
\begin{align*}
\Phi_I: V^{\otimes (p,q)} \rightarrow V^{\otimes (p-1, q-1)},
\end{align*}
\begin{align*}
&u_1\otimes \dots \otimes u_p \otimes u_1^* \otimes \dots \otimes u_q^* \mapsto
\left\langle u_i, u_j^* \right\rangle_t u_1\otimes \dots \otimes \hat{u_i}\otimes \dots \otimes u_p \otimes u_1^* \otimes \dots \otimes \hat{u^*_j}\otimes \dots \otimes u_q^*
\end{align*}
for any $u_i \in V$, $u^*_i \in V_*$. Similarly, for any collection of pairwise disjoint index pairs $I_1, \dots, I_r$ respectively from the sets $\{1,2, \dots, p\}$ and $\{1,2, \dots, q\}$, we define the $r$-fold contraction
\begin{align*}
\Phi_{I_1, \dots, I_r}: V^{\otimes (p,q)} \rightarrow V^{\otimes (p-r, q-r)}
\end{align*}
in the obvious way.

If $\g \cong \sp(\infty), \so(\infty)$, for any pair of indices $I = (i,j)$ with $i < j \in \{1,2, \dots, p\}$ we define the contraction 
\begin{align*}
\Phi_I: V^{\otimes p} \rightarrow V^{\otimes (p-2)},
\end{align*}
\begin{align*}
&u_1\otimes \dots \otimes u_p \mapsto \left\langle u_i, u_j \right\rangle_t u_1\otimes \dots \otimes \hat{u_i}\otimes \dots \otimes \hat{u_j} \otimes \dots \otimes u_p
\end{align*}
for $u_i \in V$. In a similar way we define the $r$-fold contraction $\Phi_{I_1, \dots, I_r}: V^{\otimes p} \rightarrow V^{\otimes (p-2r)}$.
%\begin{align*}
%\Phi_{I_1, \dots, I_r}: V^{\otimes p} \rightarrow V^{\otimes (p-2r)}.
%\end{align*}
%In what follows if $\g \cong \sp(\infty), \so(\infty)$ then $q=0$. We set $N_a^{\otimes (p,q)} = N_a^{\otimes p} \otimes N_c^{\otimes q}$. Let $\Phi^{a}_I$ denote the restriction of $\Phi_I$ to the submodule $N_a^{\otimes (p,q)}$. Then we define $N_a^{\{p,q\}} = \bigcap_{I} \ker \Phi^{a}_I$, for $\g \cong \gl(\infty)$ (respectively, $N_a^{\left\langle p \right\rangle} = \bigcap_{I} \ker \Phi^{a}_I$, for $\g \cong \sp(\infty)$, and $N_a^{[p]} = \bigcap_{I} \ker \Phi^{a}_I$, for $\g \cong \so(\infty)$). 

Next, let $M = V^{\{p,q\}}$ (repsectively, $V^{\left\langle p \right\rangle}$, $V^{[p]}$) and let the restriction of $\Phi_{I_1, \dots, I_r}$ to $M$ be denoted by $\Phi_{I_1, \dots, I_r}$ again. Then we have the following proposition.

\begin{prop} \label{propEmb2Appr1}Let $\g' \subset \g$ be an embedding of type II. Then
\begin{align*}
\soc^{(r)}_{\g'} M = \bigcap_{I_1, \dots, I_r} \ker \Phi_{I_1, \dots, I_r}.
\end{align*}
\end{prop}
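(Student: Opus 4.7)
The plan is to follow the strategy of the proof of Proposition \ref{propEmb1} closely, replacing the maps $L_i$, $M_j$ by the $r$-fold contractions $\Phi_{I_1,\dots,I_r}$. Set $S^{(r)}_{p,q} = \bigcap_{I_1,\dots,I_r} \ker \Phi_{I_1,\dots,I_r}$. The goal is to verify: (a) $(S^{(r+1)}_{p,q} \cap M)/(S^{(r)}_{p,q} \cap M)$ is a semisimple $\g'$-module, and (b) for every $u \in (S^{(r+2)}_{p,q} \cap M) \setminus (S^{(r+1)}_{p,q} \cap M)$ there exists $g \in U(\g')$ with $g \cdot u \in (S^{(r+1)}_{p,q} \cap M) \setminus (S^{(r)}_{p,q} \cap M)$. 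Together (a) and (b) identify $S^{(r)}_{p,q} \cap M$ with $\soc_{\g'}^{(r)} M$.

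For (a), I would exploit that $V = V' \oplus N_a$ and $V_* = V'_* \oplus N_c$ are semisimple over $\g'$ to write $V^{\otimes(p,q)} = \bigoplus_{A,C} W_{A,C}$ as $\g'$-modules, where $W_{A,C}$ is the span of monomials whose $N_a$-factors occur exactly at positions $A \subseteq \{1,\dots,p\}$ and whose $N_c$-factors occur at $C \subseteq \{1,\dots,q\}$. Then $W_{A,C} \cong N_a^{\otimes|A|} \otimes V'^{\otimes(p-|A|,q-|C|)} \otimes N_c^{\otimes|C|}$ as a $\g'$-module. The contraction $\Phi_{I_1,\dots,I_r}$ vanishes on $W_{A,C}$ unless $\{i_1,\dots,i_r\} \subseteq A$ and $\{j_1,\dots,j_r\} \subseteq C$, in which case it performs the corresponding $N_a \otimes N_c$ contractions with image in $W_{A \setminus \{i_1,\dots,i_r\},\, C \setminus \{j_1,\dots,j_r\}}$. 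Since each $\Phi_{I_1,\dots,I_r}$ commutes with every full contraction $\Psi_J$ defining $V^{\{p,q\}}$ (their index positions and pairings are independent), it carries $V^{\{p,q\}}$ into $V^{\{p-r,q-r\}}$. Hence $\bigoplus_{I_1,\dots,I_r} \Phi_{I_1,\dots,I_r}$ induces an injective $\g'$-homomorphism $(S^{(r+1)}_{p,q} \cap M)/(S^{(r)}_{p,q} \cap M) \hookrightarrow \bigoplus_{I_1,\dots,I_r} (S^{(1)}_{p-r,q-r} \cap V^{\{p-r,q-r\}})$, reducing (a) to showing that $S^{(1)}_{p,q} \cap V^{\{p,q\}}$ is semisimple. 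Decomposing every full contraction as $\Psi_I = \Psi'_I + \Phi_I$, where $\Psi'_I$ is the $V' \times V'_*$ contraction, a direct check shows $S^{(1)}_{p,q} \cap V^{\{p,q\}} \cap W_{A,C} \cong N_a^{\otimes|A|} \otimes V'^{\{p-|A|,q-|C|\}} \otimes N_c^{\otimes|C|}$, which is manifestly semisimple.

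For (b), I would mimic the argument of Proposition \ref{propEmb1}. Take bases $\{v'_i\}_{i \in I} \cup \{z_i\}_{i \in I_a}$ of $V$ and $\{v'^*_i\}_{i \in I} \cup \{t_i\}_{i \in I_c}$ of $V_*$ adapted to the type II decomposition, and write $u = \sum_i a_i\, u_i \otimes u_i^*$ in monomials. For some monomial $u_1 \otimes u_1^*$, some $(r+1)$-tuple of index pairs produces a non-zero contribution to $\Phi_{I_1,\dots,I_{r+1}}(u)$. I would then construct $g_1 = \phi(v'_{k_1} \tilde{\otimes} v'^*_{l_1})$ with $k_1, l_1$ chosen so that its action on $u_1 \otimes u_1^*$ replaces one $N_a$-entry (or, symmetrically, one $N_c$-entry) by a $V'$-entry (respectively a $V'_*$-entry), reducing by exactly one the number of $(N_a, N_c)$-pairings available for contraction in that monomial, and so that it does not interfere destructively with the remaining monomials. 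Iterating over monomials as in Proposition \ref{propEmb1} yields $g = g_N \circ \cdots \circ g_1 \in U(\g')$ with the required property; the cases $\g \cong \sp(\infty), \so(\infty)$ are analogous and slightly simpler.

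The principal technical obstacle lies in the semisimplicity computation inside (a). In contrast to the type I case, both $V$ and $V_*$ are already semisimple over $\g'$ in type II, so the non-semisimple behaviour of $V^{\{p,q\}}$ over $\g'$ arises entirely from the coupling between $W_{A,C}$ and $W_{A \cup \{i\},\, C \cup \{j\}}$ imposed by the full-trace constraint: a single full contraction $\Psi_J$ simultaneously extracts a $V'$-trace from the $(A,C)$-summand and an $N_a \otimes N_c$-trace from the higher-depth $(A \cup \{i\},\, C \cup \{j\})$-summand. Disentangling these two traces via the decomposition $\Psi_I = \Psi'_I + \Phi_I$ and verifying that $S^{(1)}_{p,q} \cap V^{\{p,q\}}$ nevertheless splits cleanly across the $(A,C)$-grading is the most delicate bookkeeping step.
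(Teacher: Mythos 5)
Your setup and part (a) track the paper's own argument closely: the paper likewise sets $S^{(r)}_{p,q}=\bigcap\ker\Phi_{I_1,\dots,I_r}$, shows the consecutive quotients embed via $\bigoplus_{\{I_1,\dots,I_r\}}\Phi_{I_1,\dots,I_r}$ into copies of $S^{(1)}_{p-r,q-r}$, and identifies the bottom layer explicitly. One bookkeeping slip in your identification of that layer: since elements of $S^{(1)}_{p,q}$ are by definition annihilated by every single contraction $\Phi_I$, the $(A,C)$-summand is $N_a^{\{|A|,|C|\}}\otimes V'^{\{p-|A|,q-|C|\}}$ with $N_a^{\{k,l\}}=(\bigcap_I\ker\Phi_I)\cap(N_a^{\otimes k}\otimes N_c^{\otimes l})$, i.e.\ the \emph{traceless} part of the $N$-factors, not all of $N_a^{\otimes|A|}\otimes N_c^{\otimes|C|}$. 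This does not affect semisimplicity (everything in sight is trivial or simple over $\g'$), but it matters for the multiplicity computations built on this proposition later.

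The genuine gap is in part (b). You propose to choose $g_1=\phi(v'_{k_1}\tilde{\otimes} v'^*_{l_1})$ whose action on a monomial "replaces one $N_a$-entry by a $V'$-entry." In a type II embedding $N_a$ and $N_c$ are \emph{trivial $\g'$-submodules} of $V$ and $V_*$ (unlike the $N_b,N_d$ of type I, which are quotients), so every element of $\g'$ acts by zero on every $N_a$- and $N_c$-entry; no element of $U(\g')$ can convert such an entry into a $V'$-entry, and your proposed $g$ would simply annihilate the offending tensor factors. The mechanism that actually lowers the $\Phi$-depth is one you half-identify in your closing paragraph but do not use: because $u$ lies in $M=V^{\{p,q\}}$, the vanishing of the full contractions $\Psi_I=\Psi'_I+\Phi_I$ forces any element of $S^{(r+2)}_{p,q}\setminus S^{(r+1)}_{p,q}$ to carry $V'\times V'_*$-trace corrections of the form $v'_k\otimes\cdots\otimes v'^*_k$ compensating each diagonal block $z_{i}\otimes\cdots\otimes t_{i}$. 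The paper's proof acts with elements $g_j=v'_{i_j}\otimes v'^*_k$ on precisely these $V'$- and $V'_*$-entries of the correction terms (iterating $m+1$ times, where $m$ bounds the number of occurrences of $v'_k$ in the remaining factors), which destroys the trace structure and yields an element of depth exactly $r+1$. Without replacing your step (b) by an argument of this kind, the proof does not go through.
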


\begin{proof}
We prove it for the case $\g' \cong \g \cong \gl(\infty)$. For the other two cases the proof is analogous.
Denote similarly as before $ S_{p,q}^{(r)} = \bigcap_{I_1, \dots, I_r} \ker \Phi_{I_1, \dots, I_r}$. Then, 
\begin{align} \label{eqEmb2Soc}
&S_{p,q}^{(1)} \cong \bigoplus_{k=0}^p \bigoplus_{l=0}^q \binom{p}{k} \binom{q}{l} N_a^{\{k,l\}}\otimes V'^{\{p-k, q-l\}},
%&S_{p,0}^{(1)} \cong \bigoplus_{k=0}^p \binom{p}{k} N_a^{\left\langle k \right\rangle }\otimes V'^{\left\langle p-k \right\rangle}, \text{ for } \g \cong \sp(\infty), \\
%&S_{p,0}^{(1)} \cong \bigoplus_{k=0}^p \binom{p}{k}N_a^{[k]}\otimes V'^{[p-k]}, \text{ for } \g \cong \so(\infty).
\end{align}
where for any $k,l$
\begin{align*}
N_a^{\{k,l\}} = (\bigcap_{I} \ker \Phi_{I}) \cap (N_a^{\otimes k} \otimes N_c^{\otimes l}).
\end{align*}
This proves that $S_{p,q}^{(1)}$ is a semisimple $\g'$-module.
Then for each disjoint collection of index pairs $I_1, \dots, I_r$
\begin{align*}
\Phi_{I_1, \dots, I_r} (S_{p,q}^{(r+1)}) \subseteq S_{p-r,q-r}^{(1)}.
\end{align*}
In this way the following is a well-defined and injective homomoprhism of $\g'$-modules
\begin{align}\label{eqEmb2SS}
\bigoplus_{\{I_1, \dots, I_r\}}\Phi_{I_1, \dots, I_r} : S_{p,q}^{(r+1)} / S_{p,q}^{(r)} \rightarrow \bigoplus_{\{I_1, \dots, I_r\}} S_{p-r,q-r}^{(1)}.
\end{align}
Here $\{I_1, \dots, I_r\}$ denotes the set of index pairs $I_1, \dots, I_r$ without considering their order. Then from (\ref{eqEmb2SS}) the semisimplicity of the consecutive quotients follows.

To show that this filtration is indeed the socle filtration of $V^{\{p,q\}}$, we take $u \in S_{p,q}^{(r+2)} \setminus S_{p,q}^{(r+1)}$. Then without loss of generality, $u = u_1 + \dots + u_s$ for some $s$ such that
\begin{align*}
u_1 = &z_{i_1}\otimes \dots \otimes z_{i_{r+1}}\otimes u_{11}\otimes t_{i_1} \otimes \dots \otimes t_{i_{r+1}} - \\
 &v'_k\otimes z_{i_2}\otimes \dots \otimes z_{i_{r+1}}\otimes u_{11}\otimes v'^*_k\otimes t_{i_2}\otimes \dots\otimes t_{i_{r+1}} - \\
&z_{i_1}\otimes v'_k\otimes z_{i_3} \otimes \dots \otimes z_{i_{r+1}}\otimes u_{11}\otimes t_{i_1}\otimes v'^*_k \otimes t_{i_3} \otimes \dots\otimes t_{i_{r+1}} - \dots - \\
&z_{i_1}\otimes \dots \otimes z_{i_{r}}\otimes v'_k\otimes u_{11}\otimes t_{i_1}\otimes \dots\otimes t_{i_{r}} \otimes v'^*_k + u_{12}
\end{align*}
where $v'_k$, $v'^*_k$ is as before a pair of dual basis elements from $V'\otimes V'_*$, $u_{11} \in V'^{\{p-r-1, q-r-1\}}$, and $u_{12} \in S_{p,q}^{(r+1)}$. The elements $u_2, \dots, u_s$ have a similar form. Notice that if $v'_k$ appears at most $m$ times in any monomial in the expression of $u_{11}$, then it appears at most $m+1$ times in any monomial in $u_1$. Hence, for $j = 1, \dots, m+1$, if we take
\begin{align*}
g_j = v'_{i_j} \otimes v'^*_k
\end{align*}
such that for all $j$, $v'_{i_j} \in V'$ does not appear in the expression of $u$ and pairs trivially with all entries in $u_{11}$, then 
$(g_1 \circ \dots \circ g_{m+1}) (u_1) \in S_{p,q}^{(r+1)} \setminus S_{p,q}^{(r)}$. We can do the same procedure for $u_2, \dots, u_s$ and this completes the proof.
\end{proof}

Next, let $\g' \subset \g$ be an embedding of type III. Let
\begin{align*}
&V = \tilde{V}_1 \oplus \dots \oplus \tilde{V}_k \oplus \tilde{V}_{k+1} \oplus \dots \oplus \tilde{V}_{k+l}, \\
&V_* = \tilde{V}^*_1 \oplus \dots \oplus \tilde{V}^*_k \oplus \tilde{V}^*_{k+1} \oplus \dots \oplus \tilde{V}^*_{k+l}
\end{align*}
be decompositions of $V$ and $V_*$ as in Proposition \ref{propBasis} with their respective bases. Then
\begin{align*}
V^{\otimes (p,q)} = \bigoplus_{\substack{i_1, \dots, i_p \\ j_1, \dots, j_q}} \tilde{V}_{i_1}\otimes \dots \otimes \tilde{V}_{i_p}\otimes \tilde{V}^*_{j_1}\otimes \dots \otimes \tilde{V}^*_{j_q},
\end{align*}
where $i_1, \dots, i_p, j_1, \dots, j_q = 1, \dots, k+l$ are not necessarily distinct indices. Notice that each $\tilde{V}_{i_1}\otimes \dots \otimes \tilde{V}_{i_p}\otimes \tilde{V}^*_{j_1}\otimes \dots \otimes \tilde{V}^*_{j_q}$ is isomorphic to $V'^{\otimes (p',q')}$ for some $p'$ and $q'$ with $p'+q' = p+q$. 

Let $J_1, \dots, J_s$ be a collection of disjoint index pairs $(i,j)$, where $i \neq j = 1, \dots p+q$. We want to define a map $\tilde{\Phi}_{J_1, \dots, J_s}$ on $V^{\otimes (p,q)}$ such that on each subspace $\tilde{V}_{i_1}\otimes \dots \otimes \tilde{V}_{i_p}\otimes \tilde{V}^*_{j_1}\otimes \dots \otimes \tilde{V}^*_{j_q}$, $\tilde{\Phi}_{J_1, \dots, J_s}$ is the contraction with respect to the bilinear form on $V' \times V'_*$. 

We proceed in the following way. We define a bilinear form $\left\langle \cdot, \cdot\right\rangle_d : (V \oplus V_*)\times (V \oplus V_*) \rightarrow \CC$ %for $\g \cong \gl(\infty)$ and $\left\langle \cdot, \cdot\right\rangle_d : V\times V \rightarrow \CC$ for $\g \cong \sp(\infty), \so(\infty)$ 
by setting
\begin{equation} \label{eqFormD}
\begin{aligned}
&\left\langle v^{s_1}_i, v^{*s_2}_j \right\rangle_d = \delta_{ij}, \qquad
\left\langle w^{*s_3}_i, w^{s_4}_j \right\rangle_d = \delta_{ij}, \\
&\left\langle v^{s_1}_i, w^{s_3}_j \right\rangle_d = \delta_{ij}, \qquad
\left\langle w^{*s_3}_i, v^{*s_1}_j \right\rangle_d = \delta_{ij}
\end{aligned}
\end{equation}
for all $s_1, s_2 = 1, \dots, k$ and all $s_3, s_4 = 1, \dots, l$. %In addition, all other basis elements pair trivially.
In addition, all other pairings of basis elements are trivial.
Let $J_1, \dots, J_s$ be as above. Let
\begin{align*} 
\Phi'_{J_1, \dots, J_s} : V^{\otimes (p,q)} \rightarrow V^{\otimes (p-s',q-s'')}
\end{align*}
be the contraction with respect to the bilinear form $\left\langle \cdot, \cdot\right\rangle_d$, where $s' + s'' = 2s$. In the case $\g \cong \sp(\infty)$ or $\g \cong \so(\infty)$ the above can be rewritten as
\begin{align*} 
\Phi'_{J_1, \dots, J_s} : V^{\otimes p} \rightarrow V^{\otimes (p-2s)}.
\end{align*}
It is not difficult to prove that $\Phi'_{J_1, \dots, J_s}$ is a $\g'$-module homomorphism. Notice that $\Phi'_{J_1, \dots, J_s}$ acts on $\tilde{V}_{i_1}\otimes \dots \otimes \tilde{V}_{i_p}\otimes \tilde{V}^*_{j_1}\otimes \dots \otimes \tilde{V}^*_{j_q}$ either as the usual contraction with respect to the bilinear form on $V' \times V'_*$ or as the zero map.

Next, let $\pi_{\substack{i_1, \dots, i_p \\ j_1, \dots, j_q}}$ denote the projection of $V^{\otimes (p,q)}$ onto the subspace $\tilde{V}_{i_1}\otimes \dots \otimes \tilde{V}_{i_p}\otimes \tilde{V}^*_{j_1}\otimes \dots \otimes \tilde{V}^*_{j_q}$. Then we set
\begin{align*}
\tilde{\Phi}_{J_1, \dots, J_s} = \bigoplus_{\substack{i_1, \dots, i_p \\ j_1, \dots, j_q}} \Phi'_{J_1, \dots, J_s} \circ \pi_{\substack{i_1, \dots, i_p \\ j_1, \dots, j_q}}.
\end{align*}

Now the following proposition holds.

\begin{prop} \label{propGenEmb3} Let $\g' \subset \g$ be an embedding of type III.
Then
\begin{align*}
\soc_{\g'}^{(r)} V^{\otimes (p,q)} = \bigcap_{J_1, \dots, J_r} \ker \tilde{\Phi}_{J_1, \dots, J_r}.
\end{align*}
\end{prop}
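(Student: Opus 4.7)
The approach is to exploit the canonical $\g'$-module decomposition
\begin{align*}
V^{\otimes(p,q)} = \bigoplus_{\vec i, \vec j} U_{\vec i, \vec j}, \qquad U_{\vec i, \vec j} := \tilde V_{i_1} \otimes \cdots \otimes \tilde V_{i_p} \otimes \tilde V^*_{j_1} \otimes \cdots \otimes \tilde V^*_{j_q},
\end{align*}
in which each summand is $\g'$-isomorphic to some $V'^{\otimes (p',q')}$ with $p'+q' = p+q$, since every $\tilde V_s$ and every $\tilde V^*_s$ is a copy of either $V'$ or $V'_*$. By property (\ref{eqPropSoc}), the socle filtration is additive over direct sums; and by \cite{PSt}, the socle filtration of $V'^{\otimes(p',q')}$ as a $\g'$-module is given by the intersection of the kernels of all iterated contractions via the natural $V' \times V'_*$ pairing. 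So the proof should reduce to identifying the restriction of $\tilde\Phi_{J_1,\ldots,J_r}$ to each $U_{\vec i, \vec j}$ with a standard $r$-fold contraction on $V'^{\otimes(p',q')}$.

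The central step is to unpack the form $\langle \cdot, \cdot \rangle_d$ defined in (\ref{eqFormD}): two basis vectors of $V \oplus V_*$ pair non-trivially under $\langle \cdot, \cdot \rangle_d$ exactly when one lies in a subspace isomorphic to $V'$ and the other in a subspace isomorphic to $V'_*$, regardless of whether those subspaces sit inside $V$ or inside $V_*$. Consequently, for a single index pair $J = (a,b)$, the restriction $\Phi'_J|_{U_{\vec i, \vec j}}$ vanishes unless the $a$-th and $b$-th tensor factors of $U_{\vec i, \vec j}$ are of opposite $V'/V'_*$ type, and when non-vanishing it coincides, under the isomorphism $U_{\vec i, \vec j} \cong V'^{\otimes(p',q')}$, with the usual contraction of those two factors through the natural $V' \times V'_*$ pairing. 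Iterating pair-by-pair, $\tilde\Phi_{J_1,\ldots,J_r}|_{U_{\vec i, \vec j}}$ is either zero or an $r$-fold standard contraction on $V'^{\otimes(p',q')}$.

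Conversely, every $r$-fold standard contraction on $V'^{\otimes(p',q')}$ arises this way: any choice of $r$ disjoint pairs of opposite-type factors in $U_{\vec i, \vec j}$ lifts to an admissible tuple $(J_1,\ldots,J_r)$ of index pairs in $\{1,\ldots,p+q\}$. Combining these observations,
\begin{align*}
\Big( \bigcap_{J_1, \ldots, J_r} \ker \tilde\Phi_{J_1, \ldots, J_r} \Big) \cap U_{\vec i, \vec j} = \soc_{\g'}^{(r)} U_{\vec i, \vec j},
\end{align*}
and summing over $(\vec i, \vec j)$ yields the proposition. The main potential obstacle is the combinatorial bookkeeping in the matching step; if one wishes to avoid invoking the socle filtration of $V'^{\otimes(p',q')}$ as a black box from \cite{PSt}, an alternative is to mimic the two-step template of Propositions \ref{propEmb1} and \ref{propEmb2Appr1}, by first embedding each layer $S^{(r+1)}/S^{(r)}$ injectively into a semisimple module via the direct sum of the $\tilde\Phi$'s of multi-degree $r$, and then, for any $u \notin S^{(r+1)}$, explicitly constructing an element of $U(\g')$ built from the $\phi(v'_i \tilde\otimes v'^*_j)$ that shifts $u$ into $S^{(r+1)} \setminus S^{(r)}$.
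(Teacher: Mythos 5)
Your proposal is correct and follows essentially the same route as the paper: decompose $V^{\otimes(p,q)}$ into the summands $\tilde V_{i_1}\otimes\cdots\otimes\tilde V^*_{j_q}$, apply the additivity (\ref{eqPropSoc}) of socle filtrations, invoke Theorem 2.2 of \cite{PSt} on each summand, and observe that $\tilde\Phi_{J_1,\dots,J_r}$ restricts to each summand either as a standard $r$-fold contraction or as zero. The paper records this last identification as a remark just before the proposition, whereas you verify it from the definition of $\left\langle\cdot,\cdot\right\rangle_d$; the content is the same.
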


\begin{proof}
Property (\ref{eqPropSoc}) of socle filtration implies 
\begin{align*}
\soc_{\g'}^{(r)} V^{\otimes (p,q)} = \bigoplus_{\substack{i_1, \dots, i_p \\ j_1, \dots, j_q}} \soc_{\g'}^{(r)} \tilde{V}_{i_1}\otimes \dots \otimes \tilde{V}_{i_p}\otimes \tilde{V}^*_{j_1}\otimes \dots \otimes \tilde{V}^*_{j_q}.
\end{align*}

Moreover, from Theorem 2.2 in \cite{PSt} it follows that
\begin{align*}
\soc_{\g'}^{(r)} \tilde{V}_{i_1}\otimes \dots \otimes \tilde{V}_{i_p}\otimes \tilde{V}^*_{j_1}\otimes \dots \otimes \tilde{V}^*_{j_q} = \bigcap_{J_1,\dots, J_r} \ker \Phi'_{J_1, \dots, J_r} | _{\tilde{V}_{i_1}\otimes \dots \otimes \tilde{V}_{i_p}\otimes \tilde{V}^*_{j_1}\otimes \dots \otimes \tilde{V}^*_{j_q}}.
\end{align*}

Then,
\begin{align*}
&\soc_{\g'}^{(r)} V^{\otimes (p,q)} = \bigoplus_{\substack{i_1, \dots, i_p \\ j_1, \dots, j_q}}\bigcap_{J_1,\dots, J_r} \ker \Phi'_{J_1, \dots, J_r} | _{\tilde{V}_{i_1}\otimes \dots \otimes \tilde{V}_{i_p}\otimes \tilde{V}^*_{j_1}\otimes \dots \otimes \tilde{V}^*_{j_q}} = \\
& \bigcap_{\substack{i_1, \dots, i_p \\ j_1, \dots, j_q}}\bigcap_{J_1,\dots, J_r} \ker (\Phi'_{J_1, \dots, J_r} \circ \pi_{\substack{i_1, \dots, i_p \\ j_1, \dots, j_q}})= \bigcap_{J_1,\dots, J_r} \ker \tilde{\Phi}_{J_1, \dots, J_r}.
\end{align*}
\end{proof}

We are now ready to prove Theorem \ref{thmMainCC}.

\begin{proof}[Proof of Theorem \ref{thmMainCC}]

We fix bases of $V$ and $V_*$ as in Proposition \ref{propBasis}. We take the short exact sequences:
\begin{align*}
&0 \rightarrow kV'\oplus lV'_* \oplus N_{a_1} \oplus N_{a_2} \stackrel{i}{\rightarrow} V \stackrel{f}{\rightarrow} N_b \rightarrow 0,\\
&0 \rightarrow lV'\oplus kV'_* \oplus N_{c_1} \oplus N_{c_2} \stackrel{i}{\rightarrow} V_* \stackrel{g}{\rightarrow} N_d \rightarrow 0
\end{align*}
and define the homomorphisms $L_{i_1,\dots, i_{n_1}}$ and $M_{j_1, \dots, j_{n_2}}$ as in the settings for embeddings of type I. %If $n_1 = 0$ (resp. $n_2 = 0$) we set $L_0 = \id$ (resp. $M_0 = \id$). 
For $\g \cong \sp(\infty), \so(\infty)$ we always take $n_2 = 0$. Recall that if $M = V^{\{p,q\}}$ then
\begin{align*}
L_{i_1,\dots, i_{n_1}}\otimes M_{j_1, \dots, j_{n_2}} : V^{\{p,q\}} \rightarrow N_b^{\otimes n_1} \otimes N_d^{\otimes n_2} \otimes V^{\{p-n_1, q-n_2\}}
\end{align*}
and similarly for the other cases.

Next, we define a pairing $\left\langle \cdot, \cdot \right\rangle_t$ on $V \times V_{*}$ as in the settings for embeddings of type II. More precisely, we set
\begin{align*}
\left\langle z_i, t_j \right\rangle_t = \left\langle z_i, t_j \right\rangle = \delta_{ij}
\end{align*}
if $z_i \in N_{a_2}$ and $t_j \in N_{c_2}$. Moreover, we take all other couples of basis elements from $V\times V_*$ to pair trivially. Then we define the contraction maps $
\Phi_{I_1, \dots, I_m} : M \rightarrow M'$ with respect to the pairing $\left\langle \cdot,\cdot \right\rangle_t$. Here $M' = V^{\{p-m, q-m\}}$ for $M = V^{\{p,q\}}$, and similarly for the other cases. If $m=0$ we set $\Phi_0 = \id$. It is not difficult to see that for any choice of $m$ and of disjoint index pairs $I_1, \dots, I_m$, the maps $\Phi_{I_1, \dots, I_m}$ are homomorphisms of $\g_2$-modules and hence also of $\g'$-modules.

Similarly, we define $\Phi'_{J_1, \dots, J_l} : M \rightarrow M''$ to be the contraction with respect to the bilinear form $\left\langle \cdot,\cdot \right\rangle_d$ defined by (\ref{eqFormD})
and such that all other basis elements from $(V\oplus V_*)\times (V\oplus V_*)$ %(respectively, $V\times V$) 
pair trivially. As in the settings for Proposition \ref{propGenEmb3} we set 
$$
\pi_{\substack{i_1, \dots, i_p \\ j_1, \dots, j_q}} : M \rightarrow \tilde{V}_{i_1}\otimes \dots \otimes \tilde{V}_{i_p}\otimes \tilde{V}^*_{j_1}\otimes \dots \otimes \tilde{V}^*_{j_q}.
$$
%to be the projection onto the ${i_1, \dots, i_p \\ j_1, \dots, j_q}$-th copy of $V'^{\{p,q\}}$ inside $M$. 
Note that the map $\pi_{\substack{i_1, \dots, i_p \\ j_1, \dots, j_q}}$ is not a $\g'$-module homomorphism, it is just a linear map. Then as before we define
\begin{align*}
\tilde{\Phi}_{J_1, \dots, J_l} = \bigoplus_{\substack{i_1, \dots, i_p \\ j_1, \dots, j_q}} \Phi'_{J_1, \dots, J_l} \circ \pi_{\substack{i_1, \dots, i_p \\ j_1, \dots, j_q}},
\end{align*}
which again is just a linear map. If $l=0$ we set $\tilde{\Phi}_0 = \id$.

Note that we defined the maps $\tilde{\Phi}_{J_1, \dots, J_l}$ and $\Phi_{I_1, \dots, I_m}$ for $M$ being equal to $V^{\{p,q\}}$, $V^{\left\langle p \right\rangle}$, and $V^{[p]}$. To simplify notations, when we have modules of the form $N_b^{\otimes n_1}\otimes N_d^{\otimes n_2}\otimes M$ we will denote the maps $\id \otimes \id \otimes \tilde{\Phi}_{J_1, \dots, J_l}$ and $\id \otimes \id \otimes \Phi_{I_1, \dots, I_m}$ again respectively by $\tilde{\Phi}_{J_1, \dots, J_l}$ and $\Phi_{I_1, \dots, I_m}$.

Now, for any $r \geq 0$ we define
\begin{align*}
S^{(r)}(M) = \bigcap_{\substack{l+m+n = r \\ n_1 + n_2 = n \\ i_1,\dots, i_{n_1}, j_1, \dots, j_{n_2} \\ I_1, \dots, I_m \\ J_1,\dots, J_l} } \ker \tilde{\Phi}_{J_1, \dots, J_l} \circ \Phi_{I_1, \dots, I_m} \circ (L_{i_1, \dots, i_{n_1}} \otimes M_{j_1, \dots, j_{n_2}}).
\end{align*}
If $\g \cong \sp(\infty), \so(\infty)$ the above can be rewritten as
\begin{align*}
S^{(r)}(M) = \bigcap_{\substack{l+m+n = r\\ i_1,\dots, i_{n}\\ I_1, \dots, I_m \\ J_1,\dots, J_l} } \ker \tilde{\Phi}_{J_1, \dots, J_l} \circ \Phi_{I_1, \dots, I_m} \circ L_{i_1, \dots, i_{n}}.
\end{align*}
For shortness, we write $S^{(r)}$ instead of $S^{(r)}(M)$ when $M$ is clear from the context.
%Note that we defined the maps $\tilde{\Phi}_{J_1, \dots, J_l}$ and $\Phi_{I_1, \dots, I_m}$ for $M$ being equal to $V^{\{p,q\}}$, $V^{\left\langle p \right\rangle}$, and $V^{[p]}$, but the definitions can be easily extended to modules of the form $\bigoplus_i V^{\{p_i,q_i\}}$, $\bigoplus_i V^{\left\langle p_i \right\rangle}$, and $\bigoplus_i V^{[p_i]}$.

Now the following three properties hold:
\begin{itemize}
\item [(1)] $S^{(r)}$ is a $\g'$-submodule of $M$ for every $r$ (see Lemma \ref{lemma1MainCC});
\item[(2)] for any $u \in S^{(r+2)} \setminus S^{(r+1)}$ there exists $g \in U(\g')$ such that $g(u) \in S^{(r+1)} \setminus S^{(r)}$ (see Lemma \ref{lemma2MainCC});
\item [(3)] $S^{(r+1)} / S^{(r)}$ is a semisimple $\g'$-module and 
$$S^{(r+1)} / S^{(r)} \cong \bigoplus_{l+m+n=r} \overline{\soc}_{\g'}^{(l+1)} (\overline{\soc}_{\g_2}^{(m+1)}(\overline{\soc}_{\g_1}^{(n+1)} M))
$$ (see Lemma \ref{lemma3MainCC}).
\end{itemize}

Furthermore, if $N$ is a submodule of $M$, then Lemma \ref{lemma3MainCC} yields
\begin{align*}
(S^{(r+1)}  \cap N)/ (S^{(r)} \cap N) \cong \bigoplus_{l+m+n=r} \overline{\soc}_{\g'}^{(l+1)} (\overline{\soc}_{\g_2}^{(m+1)}(\overline{\soc}_{\g_1}^{(n+1)} N)).
\end{align*}
Thus the statement of the theorem follows.
\end{proof}

Below we give proofs of the key lemmas used in the proof of Theorem \ref{thmMainCC}. 

\begin{lemma} \label{lemma1MainCC} Let $\g'\subset \g_2 \subset \g_1 \subset \g$, $M$, and $S^{(r)}$ be as in Theorem \ref{thmMainCC}. Then $S^{(r)}$ is a $\g'$-submodule of $M$ for every $r$.
\end{lemma}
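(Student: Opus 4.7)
The plan is to realize $S^{(r)}$ as an intersection of preimages of $\g'$-submodules under $\g'$-module homomorphisms. First, I would sort the three families of maps appearing in the definition of $S^{(r)}$ by their equivariance properties. The maps $L_{i_1,\ldots,i_{n_1}}$ and $M_{j_1,\ldots,j_{n_2}}$ are tensor products of identities with the quotient maps $f$ and $g$ from the short exact sequences fixed at the start of the proof of the theorem; they are therefore $\g$-module homomorphisms, and a fortiori $\g'$-equivariant. The contractions $\Phi_{I_1,\ldots,I_m}$ are $\g_2$-equivariant by the same calculation as in the proof of Proposition \ref{propEmb2Appr1}, since the pairing $\left\langle \cdot, \cdot \right\rangle_t$ is supported on the trivial $\g_2$-summands $N_{a_2}$ and $N_{c_2}$; in particular they are $\g'$-equivariant. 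Only the maps $\tilde{\Phi}_{J_1,\ldots,J_l}$ fail to be $\g'$-equivariant, being built from the purely linear projections $\pi_{i_1,\ldots,i_p,j_1,\ldots,j_q}$.

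Next, for each fixed $l, m, n, n_1, n_2$ with $l+m+n=r$ and each choice of index data $i_\ast, j_\ast, I_\ast$, I would introduce the $\g'$-module homomorphism
\begin{align*}
F := \Phi_{I_1,\ldots,I_m} \circ (L_{i_1,\ldots,i_{n_1}} \otimes M_{j_1,\ldots,j_{n_2}}) : M \to T,
\end{align*}
whose target $T$ has the form $N_b^{\otimes n_1} \otimes N_d^{\otimes n_2} \otimes W$, where $W$ is a smaller tensor power of $V$ and $V_*$ (and $n_2 = 0$ when $\g \cong \sp(\infty), \so(\infty)$). Regrouping the big intersection that defines $S^{(r)}$ so that only $J_1,\ldots,J_l$ varies at the innermost level yields
\begin{align*}
\bigcap_{J_1,\ldots,J_l} \ker\bigl(\tilde{\Phi}_{J_1,\ldots,J_l} \circ F\bigr) = F^{-1}\Bigl(\bigcap_{J_1,\ldots,J_l} \ker \tilde{\Phi}_{J_1,\ldots,J_l}\Bigr),
\end{align*}
so the task reduces to showing that the inner intersection is $\g'$-stable inside $T$. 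Since $N_b$ and $N_d$ are trivial $\g'$-modules and $\tilde{\Phi}_{J_1,\ldots,J_l}$ acts nontrivially only on the $W$-factor of $T$, this further reduces to the $\g'$-stability of $\bigcap_J \ker \tilde{\Phi}_J$ inside $W$, which is precisely the content of Proposition \ref{propGenEmb3} applied to the type III embedding $\g' \subset \g_2$, together with property (\ref{eqPropSoc1}) to pass to the relevant $\g'$-submodules of $W$.

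Once this is in place, $F^{-1}\bigl(\bigcap_J \ker \tilde{\Phi}_J\bigr)$ is a $\g'$-submodule of $M$, and $S^{(r)}$ is the intersection over all admissible $l, m, n, n_1, n_2, i_\ast, j_\ast, I_\ast$ with $l+m+n=r$ of such preimages, hence itself a $\g'$-submodule. The step I expect to require the most care is the identification of $\bigcap_J \ker \tilde{\Phi}_J$ as a $\g'$-stable subspace of $W$: individual maps $\tilde{\Phi}_J$ are only linear and not $\g'$-equivariant, and the fact that their joint kernel nevertheless carries a $\g'$-submodule structure rests on recognizing it as a socle filtration term via Proposition \ref{propGenEmb3}. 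Everything else in the argument is just preimage-under-$\g'$-hom and intersection-of-$\g'$-submodules, neither of which requires further calculation.
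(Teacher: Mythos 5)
Your reduction breaks down at the step you yourself flag as the delicate one. The set-theoretic regrouping $\bigcap_{J_1,\dots,J_l}\ker(\tilde{\Phi}_{J_1,\dots,J_l}\circ F)=F^{-1}\bigl(\bigcap_{J_1,\dots,J_l}\ker\tilde{\Phi}_{J_1,\dots,J_l}\bigr)$ is fine, but the inner intersection is \emph{not} a $\g'$-stable subspace of the target $T=N_b^{\otimes n_1}\otimes N_d^{\otimes n_2}\otimes W$, and Proposition \ref{propGenEmb3} does not say that it is. That proposition applies to an embedding of type III, where the natural module is exactly $kV'\oplus lV'_*$; here $W$ is a tensor power of the full $V$ and $V_*$, which still contain the vectors $z_i\in N_a$ and the representatives $x_i, y_i$ of $N_b, N_d$. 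A monomial of $W$ with an $x_i$ in a $J$-position is killed by every projection $\pi_{i_1,\dots,i_p,j_1,\dots,j_q}$ and hence lies in $\bigcap_J\ker\tilde{\Phi}_J$ for trivial reasons; but by Proposition \ref{propBasis} the vector $x_i$ pairs non-degenerately with infinitely many elements of $V_{2*}$, so $g\cdot x_i$ for suitable $g\in\g'$ has a nonzero component in $V_2$, and the resulting monomial can have a nonvanishing contraction. So the individual preimages $F^{-1}(\bigcap_J\ker\tilde{\Phi}_J)$ are not $\g'$-submodules, and your "intersection of preimages of submodules" strategy cannot close as stated.

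The paper's proof handles exactly this failure mode by exploiting the interplay between \emph{different} triples $(n,m,l)$ with $n+m+l=r$: if $\tilde{\Phi}_{J_1,\dots,J_l}(u')=0$ but $\tilde{\Phi}_{J_1,\dots,J_l}(g\cdot u')\neq 0$, this forces some $x_i$ or $y_i$ to occupy a $J$-position in a monomial of $u'=\Phi_{I_1,\dots,I_m}\circ(L_{i_1,\dots,i_{n_1}}\otimes M_{j_1,\dots,j_{n_2}})(u)$, and then one shows that $\tilde{\Phi}_{J_2,\dots,J_l}\circ\Phi_{I_1,\dots,I_m}\circ(L_{i_1,\dots,i_{n_1},i_{n_1+1}}\otimes M_{j_1,\dots,j_{n_2}})(u)\neq 0$ for some extra index $i_{n_1+1}$ — a composition with parameters $(n+1,m,l-1)$, still summing to $r$ — contradicting $u\in S^{(r)}$. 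In other words, $S^{(r)}$ is $\g'$-stable only because the failure of equivariance of one $\tilde{\Phi}_J$-term is detected by an $L$-term elsewhere in the same intersection. To repair your argument you would need to replace the term-by-term stability claim with this compensation mechanism, which is essentially the paper's proof.
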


\begin{proof}
Let $u \in S^{(r)}$. Suppose that there exists $g \in \g'$ such that $g\cdot u\notin S^{(r)}$. In other words, there exist integers $l,m,n, n_1, n_2$ with $l+m+n = r$ and $n_1 + n_2 = n$ and index sets $i_1, \dots, i_{n_1}$, $j_1, \dots, j_{n_2}$, $I_1, \dots, I_m$, and $J_1, \dots, J_l$ such that
\begin{align} \label{eqNotin}
\tilde{\Phi}_{J_1, \dots, J_l} \circ \Phi_{I_1, \dots, I_m} \circ (L_{i_1, \dots, i_{n_1}} \otimes M_{j_1, \dots, j_{n_2}}) (g\cdot u) \neq 0.
\end{align}

We set $ u'= \Phi_{I_1, \dots, I_m} \circ (L_{i_1, \dots, i_{n_1}} \otimes M_{j_1, \dots, j_{n_2}}) (u)$. Since $L_{i_1, \dots, i_{n_1}} \otimes M_{j_1, \dots, j_{n_2}}$ and $\Phi_{I_1, \dots, I_m}$ are $\g'$-module homomorphisms, (\ref{eqNotin}) implies that  $u'\neq 0$. Furthermore, 
$\tilde{\Phi}_{J_1, \dots, J_l} (u') = 0$,  whereas $\tilde{\Phi}_{J_1, \dots, J_l} (g\cdot u') \neq 0$.
This can only happen if an element $x_{i}$ or $y_{i}$ appears in a monomial in $u'$ in one of the positions specified by $J_1, \dots, J_l$. Let without loss of generality $J_1 = (1,2)$ and let $u'$ have the form 
\begin{align} \label{eqFormU}
u'= x_{i} \otimes u'_2 \otimes u'_3 \otimes \dots \otimes u'_{s_1} \otimes u'^*_1 \otimes \dots \otimes u'^*_{s_2} + u'',
\end{align}
where $u'_2,\dots, u'_{s_1} \in V$ and $u'^*_{1}, \dots, u'^*_{s_2} \in V_*$. If $\g \cong \gl(\infty)$ then $s_1 =p-n_1-m$, $s_2 = q-n_2-m$. Otherwise, $s_1 = p-n-2m$, $s_2 = 0$. 
Then, $\tilde{\Phi}_{J_1, \dots, J_l} (u'') = 0$ and 
$$g\cdot u' = (g\cdot x_{i}) \otimes u'_2 \otimes \dots \otimes u'_{s_1} \otimes u'^*_1 \otimes \dots \otimes u'^*_{s_2} + x_{i}\otimes u''' + g\cdot u''.
$$ 
Moreover, $\tilde{\Phi}_{J_1, \dots, J_l} (x_{i}\otimes u''')= 0$. Assume at first that $\tilde{\Phi}_{J_1, \dots, J_l} (g\cdot u'') = 0$ as well. Then
\begin{align*}
\tilde{\Phi}_{J_1, \dots, J_l} (g\cdot u') = \tilde{\Phi}_{J_1} ((g\cdot x_{i})\otimes u'_2) \circ \tilde{\Phi}_{J'_2, \dots, J'_{l}}(u'_3\otimes \dots \otimes u'_{s_1} \otimes u'^*_1\otimes \dots \otimes u'^*_{s_2}) \neq 0.
\end{align*} 
Here, if $J_2 = (i,j)$ then $J'_2 = (i-2, j-2)$ and similarly for $J'_3, \dots, J'_l$.

Therefore, $\tilde{\Phi}_{J_2, \dots, J_{l}} (u') \neq 0$. The last inequality and (\ref{eqFormU}) imply that there exists an index $i_{n_1 +1}$ such that
\begin{align*}
\tilde{\Phi}_{J_2, \dots, J_l} \circ \Phi_{I_1, \dots, I_m} \circ (L_{i_1, \dots, i_{n_1}, i_{n_1 + 1}} \otimes M_{j_1, \dots, j_{n_2}}) ( u) \neq 0.
\end{align*}
This is a contradiction with $u \in S^{(r)}$.

If $\tilde{\Phi}_{J_1, \dots, J_l} (g\cdot u'') \neq 0$ we can replace $u'$ in the above discussion with $u''$, which has a strictly smaller number of monomials that $u'$. Thus in finitely many steps we will reach a contradiction with the choice of $u$, and this proves the statement.
\end{proof}

\begin{lemma} \label{lemma2MainCC} Let $\g'\subset \g_2 \subset \g_1 \subset \g$, $M$, and $S^{(r)}$ be as in Theorem \ref{thmMainCC}. Then for any $u \in S^{(r+2)} \setminus S^{(r+1)}$ there exists $g \in U(\g')$ such that $g \cdot u \in S^{(r+1)} \setminus S^{(r)}$.
\end{lemma}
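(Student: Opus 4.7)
The strategy is to mimic and combine the techniques already developed in the proofs of property (a) in Propositions \ref{propEmb1}, \ref{propEmb2Appr1}, and \ref{propGenEmb3}, for embeddings of types I, II, and III separately. In each of those proofs $g$ is built as a product $g = g_N \circ \cdots \circ g_1$ of rank-one elements $g_i = v'_{a_i} \tilde{\otimes} v'^*_{b_i} \in \g'$ chosen so as to reduce by exactly one the ``depth'' of each bad monomial appearing in the expansion of $u$.

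First I would decompose $u = \sum_{i=1}^N \alpha_i u_i$ into basis monomials using the bases of $V, V_*$ fixed in Proposition \ref{propBasis}. Since $u \in S^{(r+2)} \setminus S^{(r+1)}$, there is a length-$(r+1)$ composition $F = \tilde{\Phi}_{J_1,\dots,J_{l_0}} \circ \Phi_{I_1,\dots,I_{m_0}} \circ (L_{i_1,\dots,i_{n_1^0}} \otimes M_{j_1,\dots,j_{n_2^0}})$, with $l_0+m_0+n_0 = r+1$, on which some monomial $u_i$ contributes non-trivially. I then classify the ``bad'' entries of $u_i$ detected by $F$ into three mutually exclusive types: (i) entries lying in $N_b$ or $N_d$, detected by $L$ or $M$; (ii) entries in $N_{a_2}$ or $N_{c_2}$ contributing to a matched pair under $\Phi$ (as in the proof of Proposition \ref{propEmb2Appr1}, typically accompanied by compensating summands involving $v'_k \otimes v'^*_k$); (iii) factors lying in off-diagonal blocks $\tilde{V}_j \otimes \tilde{V}^*_m$ with $\tilde{V}_j$ and $\tilde{V}^*_m$ non-paired, contributing to $\tilde{\Phi}$.

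For each monomial $u_i$ not already in $S^{(r+1)}$ I would then construct $g_i = v'_{a_i} \tilde{\otimes} v'^*_{b_i}$ with $v'_{a_i}, v'^*_{b_i}$ \emph{fresh}, meaning that they do not appear in any monomial of $u$ nor in any previously chosen $g_j$. The indices $a_i, b_i$ are tailored to the type of bad entry being eliminated: for type (i) I choose $v'^*_{b_i}$ so that it pairs non-trivially with the distinguished $N_b$-entry in $u_i$ (so that $\phi(g_i)$ replaces it by the socle vector $v'_{a_i}$), following the argument of Proposition \ref{propEmb1}; for types (ii) and (iii) I apply the compensation and projection tricks from Propositions \ref{propEmb2Appr1} and \ref{propGenEmb3} respectively. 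Freshness guarantees that $g_i$ does not introduce new bad entries in any other monomial. Setting $g = g_N \circ \cdots \circ g_1$, two properties have to be verified: that $g \cdot u \in S^{(r+1)}$, which follows because each $g_i$ lowers by at most one the depth of every monomial it acts on, combined with $u \in S^{(r+2)}$ and Lemma \ref{lemma1MainCC}; and that $g \cdot u \notin S^{(r)}$, since for the specific $F$ one obtains a non-zero image on $g \cdot u$ by dropping the single contraction corresponding to the eliminated bad entry.

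The main obstacle is controlling the interaction between the three kinds of maps. In particular, $\tilde{\Phi}_{J_1,\dots,J_l}$ involves the non-equivariant projectors $\pi$, so one cannot freely commute the $\g'$-action past this part of $F$. This forces the argument to proceed monomial-by-monomial and relies crucially on the freshness of the basis vectors used in each $g_i$ to prevent cross-contamination between different summands or between contractions of different types.
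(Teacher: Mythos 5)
Your overall strategy (rank-one elements $\phi(v'_a\tilde{\otimes}v'^*_b)$ with fresh indices, eliminating one bad entry at a time, case analysis by the type of bad entry) is in the same family as the paper's argument, but two essential ingredients are missing, and one step as stated would fail.

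First, the paper does not treat the three types of bad entries simultaneously monomial-by-monomial. It orders the triples $(n,m,l)$ lexicographically and always attacks the \emph{largest} triple for which some composition of length $r+1$ is non-zero on $u$. Concretely: as long as $n>0$, a \emph{single} rank-one $g$ (not a product over monomials) is chosen to hit one entry $x_t\in N_b$ (or $y_t\in N_d$), and the maximality of the triple is what guarantees both that the witness composition now vanishes and that no other length-$(r+1)$ composition survives on $g\cdot u$ — because any surviving one could be extended by one more index to a length-$(r+2)$ composition non-vanishing on $u$, contradicting $u\in S^{(r+2)}$. Only after $n$ has been reduced to $0$, i.e.\ $u\in\soc_{\g_1}M$, does the paper invoke the type-II and type-III mechanisms (Lemmas \ref{lemmaAux2C} and \ref{lemmaAux1C}). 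This ordering is not cosmetic: you correctly note that $\tilde{\Phi}_{J_1,\dots,J_l}$ involves the non-equivariant projectors $\pi$, but your proposal does not resolve that difficulty — the resolution is precisely that $\Phi_{I_1,\dots,I_m}$ and $\tilde{\Phi}_{J_1,\dots,J_l}$ \emph{become} $\g'$-module homomorphisms when restricted to $\soc_{\g_1}M$, so the arguments of Propositions \ref{propEmb2Appr1} and \ref{propGenEmb3} can only legitimately be imported after all $N_b$- and $N_d$-entries have been removed. Fixing a type-(iii) defect with a rank-one $g_i$ while type-(i) entries are still present can move $g_i\cdot x_t$ into $\soc_{\g'}V$ and change the block decomposition seen by $\tilde{\Phi}$, so "freshness" alone does not prevent cross-contamination between types.

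Second, your justification that $g\cdot u\in S^{(r+1)}$ — "each $g_i$ lowers by at most one the depth of every monomial it acts on, combined with $u\in S^{(r+2)}$ and Lemma \ref{lemma1MainCC}" — does not give the needed conclusion. Lemma \ref{lemma1MainCC} only yields $g\cdot u\in S^{(r+2)}$. What must be shown is that \emph{every} composition of total length $r+1$ annihilates $g\cdot u$, and a bound of the form "depth drops by at most one per factor" points in the wrong direction (it controls overshooting into $S^{(r)}$, not membership in $S^{(r+1)}$; and with $N$ factors it does not even control overshooting). The paper's lexicographic-maximality contradiction is the mechanism that closes this gap, and it should appear explicitly in your argument.
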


\begin{proof}
We order the triples of numbers $(n,m,l)$ lexicographically. Let $u \in S^{(r+2)} \setminus S^{(r+1)}$ and let $(n,m,l)$ with $l+m+n = r+1$ be the largest triple for which there exist index sets $i_1, \dots, i_{n_1}$, $j_1, \dots, j_{n_2}$, $I_1, \dots, I_m$, and $J_1, \dots, J_l$ such that
\begin{align*}
\tilde{\Phi}_{J_1, \dots, J_l} \circ \Phi_{I_1, \dots, I_m} \circ (L_{i_1, \dots, i_{n_1}} \otimes M_{j_1, \dots, j_{n_2}}) (u) \neq 0.
\end{align*}

Suppose first that $n > 0$. Then at least one monomial in $u$ has an entry $x_t$ or $y_t$. Without loss of generality we may assume that $x_t$ appears in $u$. Then we take $g \in \g'$ of the form
%\begin{align*}
%g= v^1_i\otimes v^{*1}_j + \dots + v^k_i\otimes v^{*k}_j - w^1_j\otimes w^{*1}_i - \dots - w^k_j\otimes w^{*k}_i
%\end{align*}
\begin{align*}
g = \phi(v'_i \tilde{\otimes} v'^*_j),
\end{align*}
where $\phi$ denotes as before the embedding $\g' \hookrightarrow \g$.
%for $\g' \cong \gl(\infty)$. If $\g' \cong \sp(\infty)$, we symmetrize the above expression for $g$, and if $\g' \cong \so(\infty)$, we antisymmetrize the above expression for $g$. %and the symmetrizer or the antisymmetrizer of the above if respectively $\g'\cong \sp(\infty)$ or $\g'\cong \so(\infty)$. 
We choose $i$ and $j$ such that $v^{*s}_{j}$ and $w^{*s}_i$ do not appear in $u$ for any $s$, at least one of them pairs non-degenerately with $x_t$, and they pair trivially with all basis elements from $V_2$ which appear in $u$. Furthermore, $v^s_i$ and $w^s_j$ do not appear in $u$ for any $s$ and pair trivially with all basis elements from $V_{2*}$ which appear in $u$. Then
\begin{align*}
\tilde{\Phi}_{J_1, \dots, J_l} \circ \Phi_{I_1, \dots, I_m} \circ (L_{i_1, \dots, i_{n_1}} \otimes M_{j_1, \dots, j_{n_2}}) (g\cdot u) = 0
\end{align*}
and there exist indices $i''_1, \dots, i''_{n_1 - 1}$ such that 
\begin{align*}
\tilde{\Phi}_{J_1, \dots, J_l} \circ \Phi_{I_1, \dots, I_m} \circ (L_{i''_1, \dots, i''_{n_1-1}} \otimes M_{j_1, \dots, j_{n_2}}) (g\cdot u) \neq 0.
\end{align*}

Now suppose that there exists another triple $(n',m',l')$ with $l'+m'+n' = r+1$ for which there are index sets $i'_1, \dots, i'_{n'_1}$, $j'_1, \dots, j'_{n'_2}$, $I'_1, \dots, I'_{m'}$, and $J'_1, \dots, J'_{l'}$ such that
\begin{align*}
\tilde{\Phi}_{J'_1, \dots, J'_{l'}} \circ \Phi_{I'_1, \dots, I'_{m'}} \circ (L_{i'_1, \dots, i'_{n'_1}} \otimes M_{j'_1, \dots, j'_{n'_2}}) (g\cdot u) \neq 0.
\end{align*}
But then there exists an index $i'_{n'+1}$ such that
\begin{align*}
\tilde{\Phi}_{J'_1, \dots, J'_{l'}} \circ \Phi_{I'_1, \dots, I'_{m'}} \circ (L_{i'_1, \dots, i'_{n'_1}, i'_{n'_1+1}} \otimes M_{j'_1, \dots, j'_{n'_2}}) (u) \neq 0,
\end{align*}
which contradicts with the choice of $u$. Hence, $g\cdot u \in S^{(r+1)} \setminus S^{(r)}$.

Now, suppose that $n = 0$. This means that $u \in \soc_{\g_1} M$ and $u$ consists only of elements from $V_1$ and $V_{1*}$. Notice that, when restricted to $\soc_{\g_1} M$, both maps $\tilde{\Phi}_{J_1, \dots, J_{l}}$ and $\Phi_{I_1, \dots, I_{m}}$ are $\g'$-module homomorphisms. Therefore, in this case the statement of the lemma reduces to the following claim. Let $l+m = r+1$, and $I_1, \dots, I_m$, and $J_1, \dots, J_l$ be such that
\begin{align*}
\tilde{\Phi}_{J_1, \dots, J_l} \circ \Phi_{I_1, \dots, I_m} (u) \neq 0.
\end{align*}
Then there exists $g \in U(\g')$ such that 
%\begin{align*}
$\tilde{\Phi}_{J_1, \dots, J_l} \circ \Phi_{I_1, \dots, I_m} (g\cdot u) = 0$
%\end{align*}
and 
%\begin{align*}
$\tilde{\Phi}_{J'_1, \dots, J'_{l'}} \circ \Phi_{I'_1, \dots, I'_{m'}} (g\cdot u) \neq 0$
%\end{align*}
for some $I'_1, \dots, I'_{m'}$, and $J'_1, \dots, J'_{l'}$ with $l'+m'= r$.
 
To prove this claim we consider two cases.
\begin{itemize}
\item[(1)] Let $
\tilde{\Phi}_{J_1, \dots, J_l} \circ \Phi_{I_1, \dots, I_m} (u) \neq 0$ with $l \geq 1$. Then we can apply Lemma \ref{lemmaAux1C} to the element $\Phi_{I_1, \dots, I_m} (u)$. This yields an element $g \in U(\g')$ and disjoint index pairs $J'_1, \dots, J'_{l-1}$ such that 
\begin{align*}
\tilde{\Phi}_{J'_1, \dots, J'_{l-1}} \circ \Phi_{I_1, \dots, I_m}(g\cdot u) \neq 0.
\end{align*}
\item[(2)]  Let $\tilde{\Phi}_0 \circ \Phi_{I_1, \dots, I_m} (u) \neq 0$. Then the statement follows from Lemma \ref{lemmaAux2C}.
\end{itemize}
\end{proof}

\begin{lemma} \label{lemmaAux1C} Let $u \in \soc_{\g_1} M$ be such that $\tilde{\Phi}_{J_1,\dots, J_l}(u) \neq 0$ for some $l$ and some $J_1, \dots, J_l$. %and $\Phi'_{J_1,\dots, J_{l+1}}(u) = 0$ for all collections of disjoint index sets $J_1, \dots, J_{l+1}$. 
Then there exists $g \in U(\g')$ such that $\tilde{\Phi}_{J_1,\dots, J_l}(g\cdot u) = 0$ and there exists a collection $J'_1, \dots, J'_{l-1}$ such that $\tilde{\Phi}_{J'_1,\dots, J'_{l-1}}(g\cdot u) \neq 0$.
\end{lemma}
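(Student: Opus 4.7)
The intended argument is a type-III analogue of the lowering construction used at the end of the proof of Proposition~\ref{propEmb2Appr1}. Expand $u=\sum_k c_k u_k$ as a linear combination of pure tensor monomials in the basis of $V_1\oplus V_{1*}$ supplied by Proposition~\ref{propBasis}, and single out a monomial $u_0$ on which $\tilde\Phi_{J_1,\ldots,J_l}$ evaluates nontrivially. Write $J_1=(a,b)$; the entries of $u_0$ at positions $a$ and $b$ must pair nontrivially under $\langle\cdot,\cdot\rangle_d$, a form that acts by $\delta$ on the $V'$- and $V'^*$-indices of basis vectors and is insensitive to which copy $\tilde V_s$ or $\tilde V^*_t$ a basis vector sits in.

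I would build $g = g_N\cdots g_1\in U(\g')$ one factor at a time, each of the form $g_r=\phi(v'_{i_r}\tilde\otimes v'^*_{j_r})$, with one of the indices $i_r, j_r$ equal to the index at position $a$ of the currently treated monomial and the other chosen \emph{fresh}, i.e.\ not appearing as an index in any basis vector of $u$ nor as an index used in any previously chosen $g_{r'}$. The decisive consequence of freshness is that every basis vector introduced into $g_r\cdot u_k$ carries an index which cannot participate in any nontrivial $\langle\cdot,\cdot\rangle_d$-pairing with a basis vector already present in $u$. Consequently, in the Leibniz expansion of $g_r\cdot u_k$ the summand where the modification lies at position $a$ (or symmetrically at position $b$) has $J_1$-pairing zero, and iterating over all monomials gives $\tilde\Phi_{J_1,\ldots,J_l}(g\cdot u)=0$. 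For the second claim, the summand of $g_1\cdot u_0$ where the modification happens at position $a$ keeps the entries at positions in $J_2,\ldots,J_l$ unchanged, so these still pair nontrivially under $\langle\cdot,\cdot\rangle_d$; freshness prevents any other summand from cancelling it. Therefore $\tilde\Phi_{J_2,\ldots,J_l}(g\cdot u)\neq 0$, and one takes $J'_i:=J_{i+1}$.

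The main obstacle is the bookkeeping of the Leibniz side-effects of each $g_r$: as a derivation, $g_r$ acts on every position of every monomial where its matching index occurs, producing a large number of parasitic summands. The freshness hypothesis is the lever that contains this: since the freshly-introduced indices can never feed back into a relevant $\langle\cdot,\cdot\rangle_d$-pairing, every parasitic summand arising from a modification at a position outside $\{a,b\}$ either has $J_1$-contraction equal to that of the original $u_k$ (and can therefore be handled by subsequent generators $g_{r+1},\ldots,g_N$) or already lies in $\ker\tilde\Phi_{J_1,\ldots,J_l}$. A secondary technicality is the sign convention in the $\sp(\infty),\so(\infty)$ cases, where $v'^*_i=\pm v'_{-i}$, but this does not affect the structure of the argument.
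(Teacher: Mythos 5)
Your proof takes a genuinely different route from the paper's, and as written it has a gap. The paper's own argument is a short reduction: since $u\in\soc_{\g_1}M$, every monomial of $u$ factors as $u'\otimes u''$ with $u'$ built from $V_2\oplus V_{2*}$ and $u''$ built from the trivial modules $N_{a_2},N_{c_2}$; because $\left\langle\cdot,\cdot\right\rangle_d$ vanishes on anything involving the $z_i$'s and $t_i$'s, the map $\tilde{\Phi}_{J_1,\dots,J_l}$ only contracts the $u'$ factor, so the claim reduces to the type III situation of Proposition \ref{propGenEmb3}. That proposition identifies $\bigcap\ker\tilde{\Phi}_{J_1,\dots,J_r}$ with the socle filtration of the relevant $\g'$-module, and once one knows the filtration is a socle filtration, the existence of a lowering element $g\in U(\g')$ is automatic (otherwise the socle filtration of the submodule $U(\g')\cdot u$ would stabilize too early). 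You instead try to re-derive the lowering property by an explicit fresh-index construction, essentially unfolding the proof of \cite[Theorem 2.2]{PSt}; that is legitimate in principle, but it forfeits the shortcut and forces you to control all the Leibniz side-terms by hand.

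The gap is in that control, and it affects both halves of the conclusion. For the vanishing of $\tilde{\Phi}_{J_1,\dots,J_l}(g\cdot u)$: if the matching index $m$ occurs at $j>1$ positions of a monomial, a single generator $\phi(v'_f\tilde{\otimes}v'^*_m)$ produces $j-1$ parasitic summands whose entry at position $a$ is untouched and whose $J_1$-contraction is therefore still nonzero. Saying these are ``handled by subsequent generators'' is not an argument; the actual mechanism is that one must compose $j$ such generators with $j$ distinct fresh indices, so that in every surviving term of the $j$-fold product all occurrences of $m$, in particular the one at position $a$, have been replaced --- this is the $(g_1\circ\dots\circ g_{m+1})$ device of Proposition \ref{propEmb2Appr1} and Lemma \ref{lemmaAux2C}, and it has to be spelled out. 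More seriously, this forced iteration undermines your choice $J'_i:=J_{i+1}$: if another occurrence of $m$ sits at a position belonging to some $J_s$ with $s\geq 2$, then after the iteration every surviving term has that entry replaced by a fresh index, which kills the $J_s$-pairing, so $\tilde{\Phi}_{J_2,\dots,J_l}(g\cdot u)$ may well vanish and a \emph{different} collection $J'_1,\dots,J'_{l-1}$ must be produced. Finally, ``freshness prevents any other summand from cancelling it'' shows only that the good monomial $T_1$ survives in $g\cdot u$ with nonzero coefficient; what must be shown nonzero is its image under $\tilde{\Phi}_{J_2,\dots,J_l}$ inside the full sum, and distinct monomials of $u$ can have equal contractions, so cancellation at the level of images is not excluded by distinctness of monomials. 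All of these difficulties evaporate under the paper's reduction to Proposition \ref{propGenEmb3}, which is the step your plan is missing.
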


\begin{proof}
Let $u = u_1 + \dots +u_t$ where without loss of generality
\begin{align*}
u_1 = u_1'\otimes u_1'' = u_1'\otimes z_{i_1}\otimes \dots \otimes z_{i_{s_1}} \otimes t_{j_1}\otimes \dots \otimes t_{j_{s_2}},
\end{align*}
where $u_1'$ contains only elements from $V_2$ and $V_{2*}$. Then either $\tilde{\Phi}_{J_1, \dots , J_l} (u_1) = 0$ or $\tilde{\Phi}_{J_1, \dots , J_l} (u_1) = \tilde{\Phi}_{J_1, \dots , J_l} (u_1') \otimes u_1''$. The statement for $\tilde{\Phi}_{J_1, \dots , J_l} (u_1')$ follows from Proposition \ref{propGenEmb3}.
\end{proof}

\begin{lemma} \label{lemmaAux2C}
Let $u \in \soc_{\g_1} M$ be such that $\Phi_{I_1, \dots, I_{m}}(u) \neq 0$ for some $m > 0$ and some $I_1, \dots, I_m$. Then there exists $g \in U(\g')$ such that $\Phi_{I_1, \dots, I_{m}}(g\cdot u) = 0$ and $\Phi_{I'_1, \dots, I'_{m-1}}(g\cdot u) \neq 0$ for some collection $I'_1, \dots, I'_{m-1}$.
\end{lemma}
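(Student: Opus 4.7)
The plan is to mirror the argument of Proposition \ref{propEmb2Appr1} and, in the end, reduce to it. Since $u\in\soc_{\g_1}M$, every monomial of $u$ has entries in $V_1\oplus N_{a_1}$ and $V_{1*}\oplus N_{c_1}$ by Proposition \ref{propEmb1}. The bilinear form $\left\langle \cdot,\cdot\right\rangle_t$ used by $\Phi_{I_1,\dots,I_m}$ is trivial outside $N_{a_2}\times N_{c_2}$, so the $N_{a_1}, N_{c_1}$ factors play the role of spectators, exactly as the $z_{i_s}, t_{j_s}$ factors did in the proof of Lemma \ref{lemmaAux1C}. First I would decompose each monomial of $u$ as a tensor product of a ``core'' part with entries in $V_1, V_{1*}$ and a ``tail'' of $N_{a_1}, N_{c_1}$-entries, split off the tails, and reduce to the case where every monomial of $u$ has all of its entries in $V_1$ and $V_{1*}$.

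After this reduction we are in the setting of Proposition \ref{propEmb2Appr1} applied to the type II embedding $\g_2\subset\g_1$, with $N_{a_2}, N_{c_2}$ playing the role of $N_a, N_c$. The hypothesis $\Phi_{I_1,\dots,I_m}(u)\neq 0$ then places $u$ outside the $m$-th socle layer of the ambient simple $\g_1$-module considered as a $\g_2$-module. The argument of Proposition \ref{propEmb2Appr1} produces an operator $\tilde g\in U(\g_2)$, built as a composition of factors of the form $v^{s_0}_{i_j}\otimes v^{s_0,*}_k$ with $v^{s_0}_{i_j}\in\tilde V_{s_0}$ and $v^{s_0,*}_k\in\tilde V^*_{s_0}$ acting on a single copy of $V'$ in $V_2=kV'\oplus lV'_*$, whose effect on $u$ is to lower the contraction-order by exactly one.

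To obtain an operator in $U(\g')$ with the same effect, I would replace each single-copy factor $v^{s_0}_{i_j}\otimes v^{s_0,*}_k$ by the diagonal element $\phi(v'_{i_j}\tilde\otimes v'^*_k)\in\g'$. By the explicit formula recorded in the proof of Proposition \ref{propSLExclude}, this element equals $\sum_s v^s_{i_j}\otimes v^{s,*}_k-\sum_{s'}w^{s'}_k\otimes w^{s',*}_{i_j}$; equivalently, its action on $V$ sends every $v^t_k\mapsto v^t_{i_j}$, on $V_*$ sends every $w^{t,*}_k\mapsto w^{t,*}_{i_j}$, and annihilates both $N_{a_2}$ and $N_{c_2}$. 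Choosing $v'_{i_j}$ and $v'^*_k$ whose indices lie outside the (finite) index-support of $u$ in every copy, the extra ``cross-copy'' contributions of the diagonal action produce only fresh monomials whose positions $I_1,\dots,I_m$ still carry the original $z$-$t$ entries, so the contraction-reduction mechanism of Proposition \ref{propEmb2Appr1} applies on each copy separately. I would then set $g$ to be the composition of these $\g'$-liftings.

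The main obstacle is precisely this copy-by-copy bookkeeping: one must verify that the off-target parts of $\phi(v'_{i_j}\tilde\otimes v'^*_k)$, i.e.\ the replacements happening in copies other than the one targeted by the corresponding step in the proof of Proposition \ref{propEmb2Appr1}, neither spoil the vanishing $\Phi_{I_1,\dots,I_m}(g\cdot u)=0$ nor cancel the non-trivial contribution to $\Phi_{I'_1,\dots,I'_{m-1}}(g\cdot u)$ for some $I'_1,\dots,I'_{m-1}$. The critical simplification is that $\g'$ acts trivially on $N_{a_2}$ and $N_{c_2}$, so the positions and identities of the $z$'s and $t$'s in every monomial of $u$ are preserved throughout the action; the entire analysis then reduces, one copy at a time, to the verification already carried out in Proposition \ref{propEmb2Appr1}.
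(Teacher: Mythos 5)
Your proposal follows essentially the same route as the paper: the paper likewise reduces to the second half of the proof of Proposition \ref{propEmb2Appr1}, writes each summand of $u$ in the ``traceless'' form $z_{i_1}\otimes\cdots\otimes u_1'\otimes t_{i_1}\otimes\cdots-\sum_n v^n_j\otimes\cdots\otimes v^{n*}_j\otimes\cdots$, and applies compositions of the diagonal elements $\phi(v'_{i}\tilde\otimes v'^{*}_j)$ with fresh indices, using exactly the two facts you isolate ($\g'$ annihilates $N_{a_2}$ and $N_{c_2}$, and the replacement $j\mapsto i$ happens in every copy at once). The cross-copy bookkeeping you flag as the main obstacle is resolved in the paper precisely by counting the occurrences of $v^s_j$ and $w^{s*}_j$ summed over all copies $s=1,\dots,k$ and applying one more operator than that total, so your plan is correct and complete in the intended sense.
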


\begin{proof}
The proof is analogous to the second part of the proof of Proposition \ref{propEmb2Appr1}. Let $u$ be as in the statement of the lemma. Then $u = u_1 + \dots +u_h$ and without loss of generality we can fix $I_1, \dots, I_m$ such that $u_1$ has the form
\begin{align*}
u_1 = &z_{i_1}\otimes z_{i_2}\otimes \dots \otimes z_{i_{m}} \otimes u'_1\otimes  t_{i_1}\otimes t_{i_2}\otimes \dots \otimes t_{i_{m}} - \\
&v^n_j\otimes z_{i_2}\otimes \dots \otimes z_{i_{m}} \otimes u'_1\otimes  v^{n*}_j\otimes t_{i_2}\otimes \dots \otimes t_{i_{m}} - \dots -\\
&z_{i_1}\otimes z_{i_2}\otimes \dots \otimes z_{i_{m}}\otimes v^{n}_j \otimes u'_1\otimes  t_{i_1}\otimes t_{i_2}\otimes \dots \otimes t_{i_{m}}\otimes v^{n*}_j + u''_1,
\end{align*}
for some $j \in I$ and some $n = 1, \dots, k$. Moreover, $\Phi_{I_1, \dots, I_{m}}(u''_1) = 0$. The elements $u_2, \dots, u_h$ have similar form.
Notice that if the basis vectors $v^{s}_j$ and $w^{s*}_j$, $s = 1, \dots, k$, appear in total at most $t$ times in any monomial in $u'_1$ then they appear at most $t+1$ times in any monomial in $v^n_j\otimes z_{i_2}\otimes \dots \otimes z_{i_{m}} \otimes u'_1\otimes  v^{n*}_j\otimes t_{i_2}\otimes \dots \otimes t_{i_{m}}$. Let us take $g_i \in \g'$ of the form
%$$
%g_i = v^1_i \otimes v^{1*}_1 + \dots v^k_i \otimes v^{k*}_{1} - w^{1}_1\otimes w^{1*}_i - \dots - w^{l}_1\otimes w^{l*}_i
%$$
$$
g_i = \phi(v'_i \tilde{\otimes} v'^*_j),
$$
where $\phi$ denotes again the embedding $\g' \subset \g$. 
%if $\g' \cong \gl(\infty)$, or respectively its symmetrization of antisymmetrization if $\g'\cong \sp(\infty)$ or $\so(\infty)$. 
Then, if $v^{s}_{i_1}, \dots, v^{s}_{i_{t+1}}$ and $w^{s*}_{i_1}, \dots, w^{s*}_{i_{t+1}}$, $s = 1, \dots, k$, are vectors that do not appear at all in the expression of $u$, then $
(g_{i_1}\circ \dots \circ g_{i_{t+1}})(u_1)
$
has the desired properties.
We proceed in the same way with $u_2, \dots, u_h$ to prove the statement.
\end{proof}

\begin{lemma} \label{lemma3MainCC} Let $\g' \subset \g_2 \subset \g_1 \subset \g$, $M$, and $S^{(r)}$ be as in Theorem \ref{thmMainCC}. Then $S^{(r+1)} / S^{(r)}$ is a semisimple $\g'$-module and 
$$S^{(r+1)} / S^{(r)} \cong \bigoplus_{l+m+n=r} \overline{\soc}_{\g'}^{(l+1)} (\overline{\soc}_{\g_2}^{(m+1)}(\overline{\soc}_{\g_1}^{(n+1)} M)).
$$

Furthermore, if $N$ is a submodule of $M$ then 
\begin{align*}
(S^{(r+1)}  \cap N)/ (S^{(r)} \cap N) \cong \bigoplus_{l+m+n=r} \overline{\soc}_{\g'}^{(l+1)} (\overline{\soc}_{\g_2}^{(m+1)}(\overline{\soc}_{\g_1}^{(n+1)} N)).
\end{align*}
\end{lemma}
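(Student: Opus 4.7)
The plan is to realise $S^{(r+1)}/S^{(r)}$ as the image of a single ``total peeling map'' and identify this image with the claimed direct sum by a three-stage application of Propositions \ref{propEmb1}, \ref{propEmb2Appr1}, and \ref{propGenEmb3}. First I would assemble
\[
\Psi^{(r)} := \bigoplus_{\substack{l+m+n=r \\ n_1+n_2=n \\ i_1,\ldots,i_{n_1} \\ j_1,\ldots,j_{n_2} \\ I_1,\ldots,I_m \\ J_1,\ldots,J_l}} \tilde{\Phi}_{J_1,\ldots,J_l} \circ \Phi_{I_1,\ldots,I_m} \circ \bigl(L_{i_1,\ldots,i_{n_1}} \otimes M_{j_1,\ldots,j_{n_2}}\bigr)
\]
into a single $\g'$-module homomorphism from $M$ to a direct sum of spaces of the form $N_b^{\otimes n_1} \otimes N_d^{\otimes n_2} \otimes V^{\otimes(p',q')}$ (with the obvious analogues in the symplectic and orthogonal cases, where the $M_{j_1,\ldots,j_{n_2}}$ factor is absent). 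By the very definition of $S^{(r)}$ one has $\ker \Psi^{(r)} = S^{(r)}$; since $S^{(r)} \subseteq S^{(r+1)}$, the restriction $\Psi^{(r)}|_{S^{(r+1)}}$ descends to an injective $\g'$-module homomorphism
\[
\overline{\Psi}^{(r)} \colon S^{(r+1)}/S^{(r)} \hookrightarrow \Im\bigl(\Psi^{(r)}|_{S^{(r+1)}}\bigr).
\]

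Next I would identify this image summand by summand with the iterated socle layers corresponding to the chain $\g' \subset \g_2 \subset \g_1 \subset \g$. For a fixed $(l,m,n)$ and fixed index collections, the composite factors into three stages corresponding to the three embeddings of types I, II, III guaranteed by Proposition \ref{propDecompose}. The assembled $L \otimes M$-maps are $\g_1$-module homomorphisms, and by Proposition \ref{propEmb1} they identify $\overline{\soc}_{\g_1}^{(n+1)} M$ with a direct sum of spaces of the form $N_b^{\otimes n_1} \otimes N_d^{\otimes n_2} \otimes W_{n_1,n_2}$ in which $W_{n_1,n_2}$ is a semisimple $\g_1$-module, hence a $\g_2$-module. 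On each $W_{n_1,n_2}$ the $\Phi_I$-maps are $\g_2$-module homomorphisms, and by Proposition \ref{propEmb2Appr1} applied to the type-II embedding $\g_2 \subset \g_1$ they pick out $\overline{\soc}_{\g_2}^{(m+1)}$ of that layer. Finally, the $\tilde{\Phi}_J$-maps are $\g'$-module homomorphisms, and by Proposition \ref{propGenEmb3} applied to the type-III embedding $\g' \subset \g_2$ they pick out $\overline{\soc}_{\g'}^{(l+1)}$ of the resulting $\g_2$-sub-layer. Composing the three bijective identifications and summing over triples $(l,m,n)$ with $l+m+n = r$ yields the desired isomorphism and in particular shows that $S^{(r+1)}/S^{(r)}$ is semisimple.

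For the submodule statement, I would observe that $S^{(r)}(N) = S^{(r)}(M) \cap N$ is immediate from the definition of $S^{(r)}$ as an intersection of kernels. Combined with property (\ref{eqPropSoc1}) of socle filtrations (which passes between iterated socles of $M$ and of $N$ at each of the three stages), this implies that the same peeling construction, applied to $N$ in place of $M$, yields $(S^{(r+1)} \cap N)/(S^{(r)} \cap N) \cong \bigoplus_{l+m+n=r} \overline{\soc}_{\g'}^{(l+1)}(\overline{\soc}_{\g_2}^{(m+1)}(\overline{\soc}_{\g_1}^{(n+1)} N))$.

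The hard part will be the identification in the middle paragraph: one has to verify that for $u \in S^{(r+1)}$, every additional degree-raising operator applied to $\tilde{\Phi}_J \circ \Phi_I \circ (L \otimes M)(u)$ vanishes, so that the image genuinely lands in the iterated socle and not merely in some larger subspace of the target. This should reduce to the commutation, up to reindexing on disjoint tensor positions, of the four operator types, together with the observation that any such further-composed operator has total degree $r+1$ and therefore kills $u$ by the very definition of $S^{(r+1)}$. Careful bookkeeping of the surviving disjoint index labels, in conjunction with the three propositions cited above, should deliver the identification and complete the proof.
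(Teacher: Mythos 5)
Your overall strategy --- peel with the composed maps $\tilde{\Phi}_{J}\circ\Phi_{I}\circ(L\otimes M)$, note that the common kernel of the degree-$r$ compositions is $S^{(r)}$, and then invoke Propositions \ref{propEmb1}, \ref{propEmb2Appr1}, and \ref{propGenEmb3} stage by stage --- is the same as the paper's. But there are two genuine gaps. First, your very first step asserts that $\Psi^{(r)}$ is a single $\g'$-module homomorphism from $M$. It is not: the maps $\tilde{\Phi}_{J_1,\dots,J_l}$ are built from the projections $\pi_{i_1,\dots,i_p,j_1,\dots,j_q}$, which are not $\g'$-equivariant (the paper states explicitly that $K''_l$ for $l>0$ ``is just a linear map''). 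They become $\g'$-module homomorphisms only after restriction to suitable submodules, namely to $K'_m\circ K_n(S^{(n+1,m+1,l+1)})$ in the paper's notation. So you cannot conclude that $S^{(r+1)}/S^{(r)}$ embeds $\g'$-equivariantly into a semisimple target, which is what your argument for semisimplicity rests on.

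Second, the step you defer as ``careful bookkeeping'' is precisely where the content of the proof lies. The paper handles it by introducing the auxiliary submodules $S^{(n+1,m+1,l+1)}$ (elements lying in $\soc^{(n+1)}_{\g_1}M$ whose images lie in $\soc^{(m+1)}_{\g_2}$ and then $\soc^{(l+1)}_{\g'}$ of the successive layers), ordering the triples lexicographically, and inducting along the resulting filtration of $S^{(r+1)}/S^{(r)}$: at each step one proves that $K''_l\circ K'_m\circ K_n$ is well defined and surjective onto $\overline{\soc}_{\g'}^{(l+1)}(\overline{\soc}_{\g_2}^{(m+1)}(\overline{\soc}_{\g_1}^{(n+1)}M))$ and computes its kernel to be exactly the preceding filtration step, and one exhibits an explicit complement $U$ so that the filtration splits into a direct sum. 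Without the surjectivity argument, the kernel identification, and the splitting, you get at best a filtration of $S^{(r+1)}/S^{(r)}$ whose layers inject into the claimed summands --- neither the isomorphism nor semisimplicity follows. The treatment of a submodule $N$ at the end of your proposal is fine.
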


\begin{proof}
Let $S^{(n+1, m+1, l+1)}$ denote the $\g'$-submodule of $M$ of elements $v$ with the following properties: 
\begin{itemize}
\item [(1)] $v \in \soc^{(n+1)}_{\g_1} M$;
\item [(2)] $\pi_n(v) \in \soc^{(m+1)}_{\g_2}(\overline{\soc}^{(n+1)}_{\g_1} M)$, where $\pi_n : \soc^{(n+1)}_{\g_1} M \rightarrow \overline{\soc}^{(n+1)}_{\g_1} M$;
\item [(3)] $\pi_{mn}\circ \pi_n(v) \in \soc_{\g'}^{(l+1)} (\overline{\soc}_{\g_2}^{(m+1)}(\overline{\soc}_{\g_1}^{(n+1)} M))$, where $$\pi_{mn} : \soc^{(m+1)}_{\g_2}(\overline{\soc}^{(n+1)}_{\g_1} M) \rightarrow \overline{\soc}^{(m+1)}_{\g_2}(\overline{\soc}^{(n+1)}_{\g_1} M).$$
\end{itemize}
Notice that $S^{(n'+1, m'+1, n'+1)} \subset S^{(n+1, m+1, l+1)}$ if and only if $(n',m',l') < (n,m,l)$ in the lexicographic order.
Thus, we obtain a filtration on M
\begin{align*}
&0 \subset S^{(1,1,1)} \subset S^{(1,1,2)} \subset \dots \subset S^{(1,1,L_{31})} \subset S^{(1,2,1)} \subset \dots \subset S^{(1, L_{21}, L_{32})} \subset \\
&S^{(2,1, 1)} \subset \dots \subset S^{(L_1, L_2, L_3)},
\end{align*}
where $L_1$ is the Loewy length of $M$ as a $\g_1$-module and the other $L$'s denote the Loewy lengths of the respective modules. %Next, we intersect this filtration with $S^{(r+1)}$. Moreover, we take a coarser filtration in which only elements $S^{(n+1, m+1, l+1)}$ with $l+m+n = r$ appear:
Next, we take a coarser filtration in which only elements $S^{(n+1, m+1, l+1)}$ with $l+m+n = r$ appear and we intersect this filtration with $S^{(r+1)}$:
\begin{equation*} %\label{eqFiltr1C}
\begin{aligned}
&0 \subset S^{(1,1, r+1)} \cap S^{(r+1)} \subset S^{(1,2, r)} \cap S^{(r+1)} \subset \dots \subset S^{(1, r+1, 1)} \cap S^{(r+1)} \subset \\ 
&S^{(2,1,r)} \cap S^{(r+1)} \subset \dots \subset S^{(2, r, 1)} \cap S^{(r+1)} \subset \dots \subset S^{(r+1,1, 1)} \cap S^{(r+1)}.
\end{aligned}
\end{equation*}

We build now the corresponding filtration on the quotient $S^{(r+1)} / S^{(r)}$.
\begin{align} \label{eqFiltrC}
0 \subset (S^{(1,1,r+1)} \cap S^{(r+1)} + S^{(r)}) / S^{(r)} \subset \dots \subset (S^{(r+1,1,1)} \cap S^{(r+1)} + S^{(r)}) / S^{(r)}.
\end{align}

For any $l$, $m$, and $n$, we define the following maps:
\begin{align*}
K_n = \bigoplus_{n_1 + n_2 = n} \bigoplus_{\substack{i_1 <\dots < i_{n_1} \\ j_1 < \dots < j_{n_2}}} L_{i_1,\dots, i_{n_1}}\otimes M_{j_1, \dots, j_{n_2}};
\end{align*}
\begin{align*}
K'_m = \bigoplus_{\{I_1, \dots, I_m\}} \Phi_{I_1, \dots, I_m};
\end{align*}
\begin{align*}
K''_l = \bigoplus_{\{J_1, \dots, J_l\}} \tilde{\Phi}_{J_1, \dots, J_l}.
\end{align*}
Then $K_0 = K'_0 = K''_0 = \id$. Moreover, $K_n$ and $K'_m$ are $\g'$-module homomorphisms for any $n$, whereas $K''_l$ for $l > 0$ is just a linear map.

Proposition \ref{propEmb1} implies that $K_n(\soc_{\g_1}^{(n+1)} M) \cong \overline{\soc}_{\g_1}^{(n+1)} M$. %Thus, $K_n = \pi_n$ when restricted to $\soc_{\g_1}^{(n+1)} M$. 
Furthermore, Propositions \ref{propEmb2Appr1} and \ref{propGenEmb3} imply the following $\g'$-module isomorphism
\begin{align*} %\label{eqMap}
K''_l \circ K'_m \circ K_n(S^{(n+1, m+1, l+1)}) \cong  \overline{\soc}_{\g'}^{(l+1)} (\overline{\soc}_{\g_2}^{(m+1)}(\overline{\soc}_{\g_1}^{(n+1)} M)).
\end{align*}
Thus, in what follows for $n+m+l = r$ we can consider $K''_l \circ K'_m \circ K_n$ as a map
\begin{align*} %\label{eqMap}
K''_l \circ K'_m \circ K_n :S^{(n+1, m+1, l+1)} \cap S^{(r+1)} \rightarrow  \overline{\soc}_{\g'}^{(l+1)} (\overline{\soc}_{\g_2}^{(m+1)}(\overline{\soc}_{\g_1}^{(n+1)} M)).
\end{align*}

To prove that $S^{(r+1)} / S^{(r)}$ is a semisimple $\g'$-module we proceed by induction on the elements of the filtration (\ref{eqFiltrC}). First, Proposition \ref{propGenEmb3} implies that
\begin{align*}
K''_{r} \circ K'_0 \circ K_0 : (S^{(1,1,r+1)} \cap S^{(r+1)} + S^{(r)}) / S^{(r)} \rightarrow \overline{\soc}_{\g'}^{(r+1)} ({\soc}_{\g_2}({\soc}_{\g_1}M))
\end{align*}
is an isomorphism of $\g'$-modules. %This follows from Proposition \ref{propGenEmb3} and from the observation that $K''_r$ restricted to $S^{(1,1,r+1)}$ is a $\g'$-module homomorphism.
Hence $(S^{(1,1,r+1)} \cap S^{(r+1)} + S^{(r)}) / S^{(r)}$ is semisimple.

Next, suppose that $S^{(n'+1,m'+1,l'+1)} \cap S^{(r+1)} + S^{(r)}) / S^{(r)}$ is semisimple for some $n'+m'+l'= r$ and that 
\begin{equation} \label{eqInd1}
\begin{aligned}
&(S^{(n'+1,m'+1,l'+1)} \cap S^{(r+1)} + S^{(r)}) / S^{(r)} \cong\\
&\bigoplus_{\substack{l''+m''+n'' = r \\ (n'', m'', l'') \leq (n', m', l')}} \overline{\soc}_{\g'}^{(l''+1)} (\overline{\soc}_{\g_2}^{(m''+1)}(\overline{\soc}_{\g_1}^{(n''+1)} M)).
\end{aligned}
\end{equation}

Let $(n,m,l)$ be the immediate successor of $(n',m',l')$ in the lexicographic order of triples of integers with sum equal to $r$. We prove next that $(S^{(n+1,m+1,l+1)} \cap S^{(r+1)} + S^{(r)}) / S^{(r)}$ is semisimple.

Take an element $u \in (S^{(n+1,m+1,l+1)} \cap S^{(r+1)} + S^{(r)}) / S^{(r)}$ of the form $u = u_1 + \dots + u_s + S^{(r)}$. Let without loss of generality
\begin{align*}
u_1 = x_{i_1}\otimes \dots\otimes x_{i_{n_1}} \otimes u_1'\otimes y_{j_1}\otimes \dots \otimes y_{j_{n_2}} + u_1'',
\end{align*}
where $n_1 + n_2 = n$ and $u''_1$ has less than $n$ elements $x_j$ and $y_j$. Furthermore,  
\begin{align*}
u_1'= z_{s_1}\otimes \dots \otimes z_{s_m} \otimes u_1''' \otimes t_{s_1}\otimes \dots \otimes t_{s_m} +u_1^{(iv)},
\end{align*}
where $u_1^{(iv)}$ has less that $m$ terms of the form $z_i \otimes t_i$.
Let the elements $u_2, \dots, u_s$ have a similar form. Then for any $g\in \g'$ either $g\cdot u = 0$ or $g\cdot u \notin S^{(n'+1,m'+1,l'+1)} \cap S^{(r+1)} + S^{(r)}) / S^{(r)}$. Let $U$ denote the submodule of $(S^{(n+1,m+1,l+1)} \cap S^{(r+1)} + S^{(r)}) / S^{(r)}$ generated by all such elements $u$. Then
\begin{align} \label{eqInd2}
(S^{(n+1,m+1,l+1)} \cap S^{(r+1)} + S^{(r)}) / S^{(r)} = (S^{(n'+1,m'+1,l'+1)} \cap S^{(r+1)} + S^{(r)}) / S^{(r)} \oplus U.
\end{align}

Moreover, we claim that
\begin{align} \label{eqInd3}
K''_l \circ K'_m \circ K_n : (S^{(n+1, m+1, l+1)} \cap S^{(r+1)} + S^{(r)}) / S^{(r)} \rightarrow \overline{\soc}_{\g'}^{(l+1)} (\overline{\soc}_{\g_2}^{(m+1)}(\overline{\soc}_{\g_1}^{(n+1)} M))
\end{align}
is a well-defined surjective homomorphism of $\g'$-modules with kernel $(S^{(n'+1, m'+1, l'+1)} \cap S^{(r+1)} + S^{(r)}) / S^{(r)}$.

It is clear that the above map is well-defined. Moreover, it is a $\g'$-module homomorphism since $K''_l$ is a $\g'$-module homomorphism when restricted to $K'_m \circ K_n((S^{(n+1, m+1, l+1)})$. %So we have to prove that the map is surjective and to compute its kernel.
To prove surjectivity we notice that
%Propositions \ref{propEmb1}, \ref{propEmb2Appr1} and \ref{propGenEmb3} imply that $K''_l \circ K'_m \circ K_n = \pi_{lmn}\circ \pi_{mn} \circ \pi_n$  when restricted to $S^{(n+1, m+1, l+1)}$. Here, $\pi_n$ and $\pi_{mn}$ are defined as before and 
%\begin{align*}
%\pi_{lmn} : \soc_{\g'}^{(l+1)} (\overline{\soc}_{\g_2}^{(m+1)}(\overline{\soc}_{\g_1}^{(n+1)} M)) \rightarrow \overline{\soc}_{\g'}^{(l+1)} (\overline{\soc}_{\g_2}^{(m+1)}(\overline{\soc}_{\g_1}^{(n+1)} M)).
%\end{align*} 
%This shows that the image lies in $\overline{\soc}_{\g'}^{(l+1)} (\overline{\soc}_{\g_2}^{(m+1)}(\overline{\soc}_{\g_1}^{(n+1)} M))$. Furthermore, 
for every $u \neq 0$ from the right-hand side in (\ref{eqInd3}) there exists $v \in S^{(n+1, m+1, l+1)}$ such that $K''_l \circ K'_m \circ K_n(v) = u$ and 
\begin{itemize}
\item $v \in \soc^{(n+1)}_{\g_1} M \setminus \soc^{(n)}_{\g_1} M$,
\item $\pi_n(v) \in \soc_{\g_2}^{(m+1)}(\overline{\soc}_{\g_1}^{(n+1)} M) \setminus \soc_{\g_2}^{(m)}(\overline{\soc}_{\g_1}^{(n+1)} M)$,
\item $\pi_{mn}(v) \in \soc_{\g'}^{(l+1)} (\overline{\soc}_{\g_2}^{(m+1)}(\overline{\soc}_{\g_1}^{(n+1)} M)) \setminus \soc_{\g'}^{(l)} (\overline{\soc}_{\g_2}^{(m+1)}(\overline{\soc}_{\g_1}^{(n+1)} M))$.
\end{itemize}
But then $v \in S^{(n+1, m+1, l+1)} \cap S^{(r+1)}$. This proves surjectivity. 

The last step is to compute the kernel of the map $K''_l \circ K'_m \circ K_n$ from (\ref{eqInd3}).
Suppose first that $m \neq 0$. %Then the immediate predecessor of $S^{(n+1, m+1, l+1)} \cap S^{(r+1)}$ in the filtration (\ref{eqFiltr1C}) is $S^{(n+1, m, l + 2)} \cap S^{(r+1)}$ and every element from it belongs to $ \ker K''_l \circ K'_m \circ K_n$. Hence, $\ker K''_l \circ K'_m \circ K_n \supseteq (S^{(n + 1, m, l + 2)} \cap S^{(r+1)} + S^{(r)}) / S^{(r)}$. We need to prove the opposite inclusion. 
Then the immediate predecessor of $(n,m,l)$ in the lexicographic order is $(n, m-1, l+1)$. Thus we need to prove that 
\begin{align} \label{eqKer1}
\ker K''_l \circ K'_m \circ K_n  = (S^{(n + 1, m, l + 2)} \cap S^{(r+1)} + S^{(r)}) / S^{(r)}.
\end{align}
Clearly, $\ker K''_l \circ K'_m \circ K_n \supseteq (S^{(n + 1, m, l + 2)} \cap S^{(r+1)} + S^{(r)}) / S^{(r)}$.
To prove the opposite inclusion, let  $[v] \in \ker K''_l \circ K'_m \circ K_n$. Suppose first that $K_n(v) = 0$. Then for some $m_1$ and $l_1$, $v \in S^{(n, m_1 +1, l_1 + 1)} \cap S^{(r+1)} \subset S^{(n + 1, m, l + 2)} \cap S^{(r+1)}$. Next, let $v$ be such that $K_n(v) \neq 0$ and $K'_m(K_n(v)) = 0$. But then $v \in S^{(n+1, m, l+2)} \cap S^{(r+1)}$. Finally, if $K'_m(K_n(v)) \neq 0$, %it follows that $K''_l\circ K'_m \circ K_n(v) = 0$ and hence $v \in S^{(r)}$. 
it follows that $v \in S^{(r)}$. This proves (\ref{eqKer1}).

Now, suppose that $m =  0$ and $n \neq 0$. %Then the immediate predecessor of $S^{(n+1, 1, l+1)} \cap S^{(r+1)}$ in the filtration (\ref{eqFiltr1C}) is $S^{(n, l+1, 1 )} \cap S^{(r+1)}$ and it clearly belongs to $\ker K''_l \circ K'_0 \circ K_n$. Hence, $(S^{(n, l+1, 1 )} \cap S^{(r+1)} + S^{(r)} )/ S^{(r)}\subseteq \ker K''_l \circ K'_0 \circ K_n$. Vice versa, if $[v] \in \ker K''_l \circ K'_0 \circ K_n$ is such that $K_n(v) = 0$ then $v \in S^{(n, m_1 +1, l_1 + 1)} \cap S^{(r+1)} \subset S^{(n, l+1, 1)} \cap S^{(r+1)}$. 
Then the immediate predecessor of $(n,0,l)$ is $(n-1, l+1, 0)$. Moreover, $(S^{(n, l+2, 1 )} \cap S^{(r+1)} + S^{(r)} )/ S^{(r)}\subseteq \ker K''_l \circ K'_0 \circ K_n$. And vice versa, if $[v] \in \ker K''_l \circ K'_0 \circ K_n$ is such that $K_n(v) = 0$ then $v \in S^{(n, m_1 +1, l_1 + 1)} \cap S^{(r+1)} \subset S^{(n, l+2, 1)} \cap S^{(r+1)}$. Hence,
$$\ker K''_l \circ K'_0 \circ K_n  = (S^{(n, l+2, 1)} \cap S^{(r+1)} + S^{(r)}) / S^{(r)}.$$

Thus, we proved (\ref{eqInd3}). Moreover, (\ref{eqInd1}), (\ref{eqInd2}), and (\ref{eqInd3}) imply
\begin{align*}
(S^{(n+1,m+1,l+1)} \cap S^{(r+1)} + S^{(r)}) / S^{(r)} \cong \bigoplus_{\substack{n'+m'+l'= r \\ (n',m', l') \leq (n,m,l) }} \overline{\soc}_{\g'}^{(l'+1)} (\overline{\soc}_{\g_2}^{(m'+1)}(\overline{\soc}_{\g_1}^{(n'+1)} M)).
\end{align*}
 
This holds for every element in the filtration (\ref{eqFiltrC}), so in particular $S^{(r+1)} / S^{(r)}$ is semisimple and 
\begin{align*}
S^{(r+1)} / S^{(r)} \cong \bigoplus_{n+m+l= r } \overline{\soc}_{\g'}^{(l+1)} (\overline{\soc}_{\g_2}^{(m+1)}(\overline{\soc}_{\g_1}^{(n+1)} M)).
\end{align*}

Note that if $N \subset M$ is any submodule of $M$, then we can interesect the above filtrations with $N$ and obtain 
\begin{align*}
(S^{(r+1)} \cap N)/ (S^{(r)} \cap N) \cong \bigoplus_{l+m+n = r} \overline{\soc}_{\g'}^{(l+1)} (\overline{\soc}_{\g_2}^{(m+1)}(\overline{\soc}_{\g_1}^{(n+1)} N)).
\end{align*}
\end{proof}

Theorem \ref{thmMainCC} shows that we can reduce the branching problem for embeddings of general tensor type to branching problems for embeddings of types I, II, and III. In the next section we will explicitly compute the branching laws for embeddings of $\gl(\infty) \subset \gl(\infty)$ of types I, II, and III and for any simple tensor $\gl(\infty)$-module $V_{\lambda, \mu}$. %Then in the appendix we will give the end results for all other cases.

\section{Branching laws for embeddings of $\gl(\infty)$ into $\gl(\infty)$} \label{secGL}

\subsection{Embeddings of type I} \label{secGLEmb1}

Let $\g', \g \cong \gl(\infty)$ and let $\g' \subset \g$ satisfy the conditions 
\begin{equation} \label{eqCond1}
\begin{aligned}
\soc_{\g'} V \cong V' \oplus N_a, && V/\soc_{\g'} V \cong N_b, \\
\soc_{\g'} V_* \cong V'_* \oplus N_c, && V_*/\soc_{\g'} V_* \cong N_d. 
\end{aligned}
\end{equation}
So far we make no assumptions about the pairing between $N_a$ and $N_c$.
First we extend the definition of the Gelfand-Tsetlin multiplicity $m^k_{\lambda, \sigma}$ to the case $k = \infty$. More precisely, we define
$$
m^{\infty}_{\lambda, \sigma} = \lim_{k \rightarrow \infty} m^{k}_{\lambda, \sigma}.
$$
In other words, 
\begin{align*}
m^{\infty}_{\lambda, \sigma} = \left\{ \begin{array}{ll}
         0 & \text{if } m^{k}_{\lambda, \sigma}=0 \text{ for all } k,\\
         1 & \text{if } \sigma = \lambda, \text{ i.e. } m^{k}_{\lambda, \sigma}=1 \text{ for all } k,\\
        \infty & \text{if }m^{k}_{\lambda, \sigma}>1 \text{ for at least one } k. \end{array} \right.
\end{align*}
Furthermore, for convenience we set $m^0_{\lambda, \sigma} = 0$ for $\lambda \neq \sigma$ and $m^0_{\lambda, \lambda} = 1$. We will refer to the values $m^k_{\lambda, \sigma}$ for $k \in \ZZ_{\geq 0} \sqcup \{\infty\}$ as the \textit{extended Gelfand-Tsetlin multiplicities}. 

We start with determining the socle filtration of any simple tensor $\g$-module $V_{\lambda, 0} \subset V^{\otimes p}$. Before stating the general result we need two lemmas.
\begin{lemma} \label{lemma1_embed} 
Let $\g' \subset \g$ satisfy (\ref{eqCond1}) with $a = 0$. Then for any $V_{\lambda, 0} \subset V^{\otimes p}$ we have
\begin{align*}
&\soc_{\g'} V_{\lambda, 0} = V'_{\lambda, 0},\\
&\overline{\soc}_{\g'}^{(r+1)} V_{\lambda, 0} \cong \bigoplus_{|\lambda'| = |\lambda| - r} m^{b}_{\lambda, \lambda'} V'_{\lambda', 0},
\end{align*}
where $m^b_{\lambda, \lambda'}$ are the extended Gelfand-Tsetlin multiplicities.
\end{lemma}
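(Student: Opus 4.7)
The plan is to combine the description of $\soc_{\g'}^{(r)} V^{\otimes p}$ from Proposition \ref{propEmb1} with Schur--Weyl duality and the iterated Gelfand--Tsetlin rule (Proposition \ref{propIterGT}). Since $a=0$ the pairing hypothesis of Proposition \ref{propEmb1} is vacuous (and the remark after that proposition removes it for $V^{\otimes p}$ unconditionally), so
$$\soc_{\g'}^{(r)} V^{\otimes p} \;=\; \bigcap_{i_1<\cdots<i_r}\ker L_{i_1,\ldots,i_r}.$$
The direct sum of contractions $\bigoplus_{|S|=r}L_S$ descends to an injective $\g'$-homomorphism from $\overline{\soc}_{\g'}^{(r+1)} V^{\otimes p}$ into $\bigoplus_{|S|=r} N_b^{\otimes r}\otimes (V')^{\otimes(p-r)}$, and I would verify that this map is in fact an isomorphism: given any element $(w_S)$ on the right, one chooses a set-theoretic section of $V\twoheadrightarrow N_b$, places the lifted $N_b$-factors of $w_S$ at the positions in $S$ and its $V'$-factors at the complementary positions, and sums over $S$, producing an element of $\soc_{\g'}^{(r+1)} V^{\otimes p}$ mapping to $(w_S)$.

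Next I would realize $V_{\lambda,0} = c_\lambda\cdot V^{\otimes p}$ via the Weyl construction (\cite{PSt},\cite{FH}) for an appropriate Young symmetrizer $c_\lambda\in\CC[S_p]$. Since $S_p$ permutes the family $\{L_{i_1,\ldots,i_r}\}$ among itself, the socle filtration of $V^{\otimes p}$ is $S_p$-stable, and hence preserved by $c_\lambda$; property (\ref{eqPropSoc1}) then gives $\soc_{\g'}^{(r)} V_{\lambda,0} = c_\lambda\cdot\soc_{\g'}^{(r)} V^{\otimes p}$. The case $r=1$ immediately yields $\soc_{\g'} V_{\lambda,0} = c_\lambda(V')^{\otimes p} = V'_{\lambda,0}$, proving the first assertion, and for $r\geq 1$ the previous paragraph yields
$$\overline{\soc}_{\g'}^{(r+1)} V_{\lambda,0} \;\cong\; c_\lambda\cdot\bigoplus_{|S|=r} N_b^{\otimes r}\otimes (V')^{\otimes(p-r)}.$$

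Finally, to compute the multiplicity of each $V'_{\sigma,0}$ with $|\sigma|=p-r$ on the right-hand side, I would apply Schur--Weyl duality $(V')^{\otimes(p-r)} \cong \bigoplus_\sigma V'_{\sigma,0}\otimes S^\sigma$ over $\g'\times S_{p-r}$ and view the direct sum over $|S|=r$ as the $S_p$-representation induced from $(V')^{\otimes(p-r)}\otimes N_b^{\otimes r}$ via $S_{p-r}\times S_r\subset S_p$. Frobenius reciprocity combined with the Littlewood--Richardson rule $S^\lambda|_{S_{p-r}\times S_r} = \bigoplus_{\sigma,\mu}c^\lambda_{\sigma\mu}S^\sigma\otimes S^\mu$ then identifies the multiplicity of $V'_{\sigma,0}$ as $\sum_\mu c^\lambda_{\sigma\mu}\,[S^\mu:N_b^{\otimes r}]$; for finite $b$ the bracket equals $\dim V^b_\mu$ by Schur--Weyl for $\gl(b)\times S_r$, and the identity $\sum_\mu c^\lambda_{\sigma\mu}\dim V^b_\mu = m^b_{\lambda,\sigma}$ follows from Proposition \ref{propIterGT} by iterating the block decomposition $\gl(n)\oplus\gl(b)\subset\gl(n+b)$. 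The main obstacle I anticipate is verifying this multiplicity identity uniformly in $b$, particularly for $b=\infty$, where the infinite-dimensional multiplicity spaces must be matched to the extended Gelfand--Tsetlin multiplicities $m^\infty_{\lambda,\sigma}\in\{0,1,\infty\}$ via the stabilization definition.
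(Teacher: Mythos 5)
Your proposal is correct in outline, and its first two steps (the identification of $\overline{\soc}_{\g'}^{(r+1)}V^{\otimes p}$ with $\bigoplus_{|S|=r}N_b^{\otimes r}\otimes (V')^{\otimes(p-r)}$ via the contractions $L_S$, and the passage to $V_{\lambda,0}=c_\lambda V^{\otimes p}$ using property (\ref{eqPropSoc1}) and the $\SS_p$-stability of the filtration) coincide with what the paper does, which simply quotes Proposition \ref{propEmb1}, the remark following it, and Theorem 2.1 of \cite{PSt} to conclude that each layer is a direct sum of modules $V'_{\lambda',0}$ with $|\lambda'|=p-r$ and unknown multiplicities. Where you genuinely diverge is in computing those multiplicities. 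The paper builds compatible exhaustions $\g'_k\subset\g_{m_k}$ by finite-dimensional subalgebras so that each vertical embedding is diagonal of signature $(1,0,n_k)$ with $n_k\rightarrow b$, applies the iterated Gelfand--Tsetlin rule (Proposition \ref{propIterGT}) to $V_{\lambda,0}\cap V_{m_k}^{\otimes p}$ at each finite level, and passes to the direct limit; the cases $b$ finite and $b=\infty$ are then handled uniformly because the stabilized limit of $m^{n_k}_{\lambda,\lambda'}$ is the extended multiplicity by definition. You instead work entirely at the infinite level, recognizing the $r$-th layer as the $\SS_p$-module induced from $(V')^{\otimes(p-r)}\boxtimes N_b^{\otimes r}$ along $\SS_{p-r}\times\SS_r\subset\SS_p$, and extract the multiplicity of $V'_{\sigma,0}$ by Frobenius reciprocity, the Littlewood--Richardson decomposition of $\mathrm{Res}\,H_\lambda$, and Schur--Weyl duality for $N_b^{\otimes r}$, arriving at $\sum_\mu c^\lambda_{\sigma\mu}\dim V^b_\mu$; you then must prove separately that this equals $m^b_{\lambda,\sigma}$ (a classical identity, obtainable from the $\gl(n)\oplus\gl(b)\subset\gl(n+b)$ branching rule already quoted in Section \ref{secFinBran}) and check the $b=\infty$ case against the stabilization definition of $m^\infty_{\lambda,\sigma}$. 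Both routes are valid: yours is more self-contained at the level of the limit objects and makes the layer structure as an induced $\SS_p$-module explicit, at the cost of an extra combinatorial identity and a separate verification for $b=\infty$; the paper's exhaustion argument avoids both and is the template reused throughout the rest of the paper.
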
 
\begin{proof}

Proposition \ref{propEmb1} and the remark after it imply that 
\begin{align} \label{eqEmb1Vp}
\overline{\soc}_{\g'}^{(r+1)}V^{\otimes p} \cong \bigoplus_{i_1 < \dots < i_r} N_b^{\otimes r}\otimes (V' \oplus N_a)^{\otimes p-r}.
\end{align}
Thus in the case $a=0$ it follows from (\ref{eqEmb1Vp}) and Theorem 2.1 in \cite{PSt} that
$$
\overline{\soc}_{\g'}^{(r+1)}V^{\otimes p} \cong \binom{p}{r}N_b^{\otimes r}\otimes (V')^{\otimes p-r} \cong \bigoplus_{|\lambda'| = p-r} c_{\lambda'} V'_{\lambda', 0}
$$
for some multiplicities $c_{\lambda'}$. Moreover, property (\ref{eqPropSoc1}) of socle filtrations implies that 
$\overline{\soc}_{\g'}^{(r+1)} V_{\lambda, 0} \subset \overline{\soc}_{\g'}^{(r+1)}V^{\otimes p}$, hence 
\begin{align} \label{eqSecSubm2} 
\overline{\soc}_{\g'}^{(r+1)} V_{\lambda, 0} \cong \bigoplus_{|\lambda'| = p-r} c'_{\lambda'} V'_{\lambda', 0}
\end{align}
for some unknown $c'_{\lambda'}$. Thus, we only need to compute the multiplicity with which each $V'_{\lambda', 0}$ enters the decomposition of $V_{\lambda, 0}$. Note that on distinct layers of the socle filtration non-isomorphic simple constituents $V'_{\lambda', 0}$ appear.

Let $\{v'_i\}_{i \in \ZZ_{>0}}$ and $\{v'^*_i\}_{i \in \ZZ_{>0}}$ be as before a pair of dual bases in $V'$ and $V'_*$, and let $\{\xi_i\}_{i \in \ZZ_{>0}}$, $\{\xi^*_i\}_{i \in \ZZ_{>0}}$ be a pair of dual bases in $V$ and $V_*$. For each $n$, put $V_n = \mathrm{span} \{\xi_{1}, \dots, \xi_{n} \}$ and $V_n^* = \mathrm{span} \{\xi_{1}^*, \dots, \xi_{n}^* \}$. The pairing between $V$ and $V_*$ restricts to a non-degenerate pairing between $V_n$ and $V_n^*$. Therefore we can define the Lie algebra $\g_n = V_n \otimes V_n^*$. Furthermore, we set $\h_n = \h_{\g} \cap \g_n$ and $\b_n = \b_{\g} \cap \g_n$. It is clear that $\g_n \cong \gl(n)$ and that $\h_n$ (respectively, $\b_n$) is a Cartan (respectively, Borel) subalgebra of $\g_n$. Moreover, if we set $V_{\lambda, 0}^n = V_{\lambda, 0} \cap V_n^{\otimes p}$, then for $n \geq p$, $V_{\lambda, 0}^n $ is a highest weight $\g_n$-module with highest weight $(\lambda,0)$ with respect to $\b_n$. 

Now we apply the same procedure to $\g'$. We define $V'_n = \mathrm{span} \{v'_{1}, \dots, v'_{n} \}$ and $V_n'^{*} = \mathrm{span} \{v'^*_{1}, \dots, v'^*_{n} \}$. We set $\g'_n = V'_n \otimes V_n'^{*}$, $\h'_n = \h_{\g'} \cap \g'_n$, $\b'_n = \b_{\g'} \cap \g'_n$, and $V'^n_{\lambda, 0} = V'_{\lambda, 0} \cap V'^{\otimes p}_n$. In this way we obtain a commutative diagram of inclusions
\begin{equation*}
\xymatrix{
&\g'_1\ar[d]\ar[r]&\g'_{2}\ar[d]\ar[r]&\dots \ar[r] &\g'_k \ar[d]\ar[r] &\dots \ar[r]&\g' \ar[d]\\
&\g_{m_1}\ar[r]&\g_{m_2}\ar[r]&\dots \ar[r] &\g_{m_k} \ar[r] &\dots \ar[r]&\g\\
}
\end{equation*}
in which all horizontal arrows are standard inclusions. %We furthermore set $N'_k = V_{m_k} / V'_k$. Then clearly $N'_k$ is a trivial $\g'_k$-module and for each $k$ we have 
%\begin{align*}
%V_{m_k} \cong V'_k \oplus N'_k.
%\end{align*} 
Then for large $k$ each embedding $\g'_k \rightarrow \g_{m_k}$ is diagonal, i.e. in our case
\begin{align*}
V_{m_k} \cong V'_k \oplus N'_k,
\end{align*} 
where $N'_k$ is some finite-dimensional trivial $\g'_k$-module. Moreover, if we set $n_k = \dim N'_k = \codim_{V_{m_k}} V'_k$ then $b = \codim_{V}V' = \lim_{k\rightarrow \infty} n_k$. Thus, when $b$ is finite we obtain that for large $k$ all vertical embeddings in the above diagram are of signature $(1,0,b)$. In the case $b = \infty$, for large $k$ all vertical embeddings are of signature $(1,0,n_k)$ with $\lim_{k\rightarrow \infty} n_k = \infty$.

Let us consider first the case when $b$ is finite. Then for large $k$ we can use the Gelfand-Tsetlin rule for the embedding $\g'_k \rightarrow \g_{m_k}$ and the module $V^{m_k}_{\lambda, 0} = V_{\lambda, 0} \cap V_{m_k}^{\otimes p}$ to obtain the decomposition
\begin{align} \label{eqSecSubm1}
V^{m_k}_{\lambda, 0} \cong \bigoplus_{\lambda'} m^{b}_{\lambda, \lambda'} V'^{k}_{\lambda', 0}.
\end{align}

Then (\ref{eqSecSubm2}) and (\ref{eqSecSubm1}) imply 
\begin{align*}
({\soc}^{(r+1)}V_{\lambda, 0}) \cap V_{m_k}^{\otimes p}/({\soc}^{(r)}V_{\lambda, 0}) \cap V_{m_k}^{\otimes p} \cong \bigoplus_{|\lambda'| = p-r} m^{b}_{\lambda, \lambda'} V'^{k}_{\lambda', 0},
\end{align*}
and passing to the direct limit we obtain the statement.

Next, let $b = \infty$. Then from the Gelfand-Tsetlin rule we obtain
\begin{align*}
({\soc}^{(r+1)}V_{\lambda, 0}) \cap V_{m_k}^{\otimes p}/({\soc}^{(r)}V_{\lambda, 0} \cap V_{m_k}^{\otimes p}) \cong \bigoplus_{|\lambda'| = p-r} m^{n_k}_{\lambda, \lambda'} V'^{k}_{\lambda', 0},
\end{align*}
and passing to the direct limit we obtain the statement.
\end{proof}

\begin{lemma} \label{lemma2_embed} Let $\g' \subset \g$ satisfy (\ref{eqCond1}) with $b = 0$. Then any $V_{\lambda, 0} \subset V^{\otimes p}$ is a completely reducible $\g'$-module and 
$$
V_{\lambda, 0} \cong \bigoplus_{\lambda'} m^{a}_{\lambda, \lambda'} V'_{\lambda', 0},
$$
where $m^{a}_{\lambda, \lambda'}$ are again the extended Gelfand-Tsetlin multiplicities.
\end{lemma}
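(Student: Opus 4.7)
My plan splits into two pieces: semisimplicity, then the multiplicity computation. For semisimplicity, the hypothesis $b=0$ forces $N_b = 0$, so the maps $L_{i_1, \dots, i_r}$ from Proposition \ref{propEmb1} vanish identically for every $r \geq 1$; by the remark after that proposition, Proposition \ref{propEmb1} applies to $V^{\otimes p}$ without any pairing assumption on $N_a$ and $N_c$. Hence $\soc_{\g'} V^{\otimes p} = V^{\otimes p}$, so $V^{\otimes p}$ is semisimple over $\g'$, and $V_{\lambda, 0}$ inherits semisimplicity as a submodule.

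For the multiplicities, I would reuse the finite-dimensional exhaustion from the proof of Lemma \ref{lemma1_embed}. Fix dual bases $\{v'_i\}$, $\{v'^*_i\}$ of $V'$ and $V'_*$, and extend them to dual bases $\{\xi_i\}$, $\{\xi^*_i\}$ of $V$ and $V_*$ by adjoining basis vectors of $N_a$ and $N_c$ respectively. Set $V'_k = \mathrm{span}\{v'_1, \dots, v'_k\}$ and choose $m_k$ so that $V'_k \subset V_{m_k}$ and $V_{m_k} = V'_k \oplus N^a_k$ with $N^a_k = V_{m_k} \cap N_a$ a trivial $\g'_k$-summand of dimension $n_k := m_k - k$; note $\lim_k n_k = a$. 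Then the embedding $\g'_k = V'_k \otimes V'^*_k \subset \g_{m_k} = V_{m_k} \otimes V^*_{m_k}$ is diagonal of signature $(1, 0, n_k)$. The situation here is actually cleaner than in Lemma \ref{lemma1_embed}, because $N_a$ is already a $\g'$-submodule of $V$, so no choice of complement is needed.

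Applying Proposition \ref{propIterGT} to the $\g_{m_k}$-module $V^{m_k}_{\lambda, 0} := V_{\lambda, 0} \cap V_{m_k}^{\otimes p}$ yields the $\g'_k$-module decomposition
$$
V^{m_k}_{\lambda, 0} \cong \bigoplus_{\lambda'} m^{n_k}_{\lambda, \lambda'}\, V'^{k}_{\lambda', 0}.
$$
Passing to the direct limit over $k$ and invoking the definition of the extended Gelfand-Tsetlin multiplicity $m^{a}_{\lambda, \lambda'}$ (which equals $m^{n_k}_{\lambda, \lambda'}$ for all sufficiently large $k$ when $a < \infty$, and is $\lim_{k \to \infty} m^{n_k}_{\lambda, \lambda'}$ when $a = \infty$) produces the claimed decomposition of $V_{\lambda, 0}$. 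The only delicate point is the $a = \infty$ case: one must verify that a finite-level multiplicity exceeding $1$ propagates correctly to infinite multiplicity in the limit, in agreement with the convention $m^\infty_{\lambda, \lambda'} = \infty$ set earlier; this is immediate from the compatibility of the Gelfand-Tsetlin isomorphisms with the nested inclusions $V^{m_k}_{\lambda, 0} \subset V^{m_{k+1}}_{\lambda, 0}$.
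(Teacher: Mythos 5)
Your proposal is correct and follows essentially the same route as the paper: the paper also deduces complete reducibility from the vanishing of the quotient map onto $N_b$ (so that Proposition \ref{propEmb1} puts everything in the socle), and then computes the multiplicities exactly as in Lemma \ref{lemma1_embed}, with the diagonal finite-level embeddings now having signature $(1,0,n_k)$ where $n_k \to a$ instead of $b$. Your write-up just makes explicit the finite-dimensional exhaustion and limit argument that the paper compresses into "this is done in the same way as in Lemma \ref{lemma1_embed}".
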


\begin{proof}
When $b = 0$ the map $f$ from (\ref{eqSES1}) is just the zero homomorphism, hence Proposition \ref{propEmb1} implies that $V_{\lambda, 0}$ is completely reducible. Therefore, we only need to compute the multiplicity of each $V'_{\lambda', 0}$ in the expression of $V_{\lambda, 0}$, and this is done in the same way as in Lemma \ref{lemma1_embed}.
\end{proof}

Now we can state the branching rule for $V_{\lambda, 0}$ for arbitrary values of $a$ and $b$.

\begin{thm} \label{thmVlambda}
Let $\g' \subset \g$ be an embedding which satisfies (\ref{eqCond1}). Then for any $V_{\lambda, 0} \subset V^{\otimes p}$
$$
\overline{\soc}_{\g'}^{(r+1)} V_{\lambda, 0} \cong \bigoplus_{\lambda''} \bigoplus_{|\lambda'| = |\lambda''| - r}m^{a}_{\lambda, \lambda''} m^{b}_{\lambda'', \lambda'} V'_{\lambda', 0}.
$$
\end{thm}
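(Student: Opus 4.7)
The strategy is to factor the embedding $\g' \subset \g$ through an intermediate classical locally finite Lie algebra $\g''' \cong \gl(\infty)$ with $\g' \subset \g''' \subset \g$, chosen so that Lemmas \ref{lemma1_embed} and \ref{lemma2_embed} apply one after the other to the two links of the chain. Concretely, I would build $\g'''$ so that the embedding $\g''' \subset \g$ satisfies (\ref{eqCond1}) with ``$b$'' equal to zero and trivial summand of dimension $a$ (the setting of Lemma \ref{lemma2_embed}), while the embedding $\g' \subset \g'''$ satisfies (\ref{eqCond1}) with ``$a$'' equal to zero and trivial quotient of dimension $b$ (the setting of Lemma \ref{lemma1_embed}).

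To construct $\g'''$, I use Proposition \ref{propBasis} and the bases $\{v'_i\}, \{v'^*_i\}, \{z_i\}, \{t_i\}, \{x_i\}, \{y_j\}$ of $V$ and $V_*$ introduced there. After adjusting the lifts $x_i, y_j$ by suitable elements of $V'$ and $V'_*$, one obtains modified lifts $\tilde x_i, \tilde y_j$ of $N_b$ and $N_d$ with the property that $\tilde x_i$ pairs trivially with $V'_*$ and $\tilde y_j$ pairs trivially with $V'$. Setting $V''' := V' \oplus \mathrm{span}\{\tilde x_i\}$ and $V'''_* := V'_* \oplus \mathrm{span}\{\tilde y_j\}$, these subspaces pair non-degenerately with each other and trivially with $N_a$ and $N_c$ respectively. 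The Lie algebra $\g''' := V''' \otimes V'''_*$ is then a $\gl(\infty)$-subalgebra of $\g$ containing $\g'$. A direct check then confirms that $V \cong V''' \oplus N_a$ semisimply as a $\g'''$-module (so the hypothesis of Lemma \ref{lemma2_embed} is satisfied for $\g''' \subset \g$), and that $V'''$ has $\g'$-socle $V'$ with quotient $N_b$ (so the hypothesis of Lemma \ref{lemma1_embed} is satisfied for $\g' \subset \g'''$).

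With $\g'''$ in hand, Lemma \ref{lemma2_embed} applied to $\g''' \subset \g$ gives the completely reducible decomposition
\[
V_{\lambda, 0}\big|_{\g'''} \cong \bigoplus_{\lambda''} m^a_{\lambda, \lambda''}\, V'''_{\lambda'', 0},
\]
and Lemma \ref{lemma1_embed} applied to $\g' \subset \g'''$ and each simple tensor $\g'''$-module $V'''_{\lambda'', 0}$ yields
\[
\overline{\soc}^{(r+1)}_{\g'} V'''_{\lambda'', 0} \cong \bigoplus_{|\lambda'| = |\lambda''| - r} m^b_{\lambda'', \lambda'}\, V'_{\lambda', 0}.
\]
Combining these two via property (\ref{eqPropSoc}) (commutation of the socle filtration with direct sums) yields
\[
\overline{\soc}^{(r+1)}_{\g'} V_{\lambda, 0} \cong \bigoplus_{\lambda''} m^a_{\lambda, \lambda''}\, \overline{\soc}^{(r+1)}_{\g'} V'''_{\lambda'', 0} \cong \bigoplus_{\lambda''} \bigoplus_{|\lambda'| = |\lambda''| - r} m^a_{\lambda, \lambda''} m^b_{\lambda'', \lambda'}\, V'_{\lambda', 0},
\]
which is the claimed formula.

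The main obstacle is the construction of the intermediate algebra $\g'''$. In generic configurations this is straightforward using Proposition \ref{propBasis}, but in configurations where the pairings between $N_a$ and $N_c$ (or between $N_b$ and $N_d$) are degenerate, additional care is needed to ensure the simultaneous satisfaction of all the pairing, invariance, and containment conditions required for $\g'''$ to be a well-defined $\gl(\infty)$-subalgebra of $\g$ containing $\g'$.
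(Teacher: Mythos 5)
Your overall strategy coincides with the paper's: factor $\g'\subset\g$ through an intermediate copy of $\gl(\infty)$ so that the outer link restricts semisimply to $V$ (Lemma \ref{lemma2_embed}, producing the factor $m^a$) and the inner link is a pure non-split extension (Lemma \ref{lemma1_embed}, producing the factor $m^b$), then combine the two via additivity of socle layers over direct sums. However, your construction of the intermediate algebra has a genuine gap: modified lifts $\tilde x_i$ of $N_b$ that ``pair trivially with $V'_*$'' (and likewise lifts $\tilde y_j$ of $N_d$ pairing trivially with $V'$) do not exist whenever $b>0$. By Proposition \ref{propBasis}, each $x_i$ pairs non-trivially with infinitely many basis vectors of $V'_*$, while subtracting an element of $V'$ (a finite linear combination of the $v'_j$) changes only finitely many of the values $\left\langle x_i, v'^*_j\right\rangle$; hence every lift of a nonzero class of $N_b$ still pairs non-trivially with infinitely many elements of $V'_*$. (The final paragraph of the proof of Proposition \ref{propBasis} is precisely the statement that such a correction is impossible.) This is not the degenerate corner case you flag at the end: it occurs exactly when $N_b\neq 0$, i.e., in exactly the situations where the theorem asserts more than Lemma \ref{lemma2_embed} does.

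The repair is to build the intermediate algebra asymmetrically, as the paper does. Keep the original lifts and set $V''=V'\oplus\mathrm{span}\{x_i\}_{i\in I_b}$, and on the conatural side take $V''_*=V'_*\oplus\mathrm{span}\{f''_i\}$, where the $f''_i$ are those basis vectors of $V_*$ outside $V'_*$ (drawn from both $N_c$ and the lifts of $N_d$) that pair trivially with $N_a=\mathrm{span}\{z_i\}$. Then $V''$ and $V''_*$ pair non-degenerately, $N_a$ pairs trivially with $V''_*$ so that $V\cong V''\oplus N_a$ is semisimple over $\g''=V''\otimes V''_*$ and Lemma \ref{lemma2_embed} applies to $\g''\subset\g$, and $\soc_{\g'}V''=V''\cap\soc_{\g'}V=V'$ with $V''/V'\cong N_b$ so that Lemma \ref{lemma1_embed} applies to $\g'\subset\g''$. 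With this corrected intermediate algebra, your two-step computation and the final assembly are exactly the paper's proof.
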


\begin{proof}
Let $\{v'_i\}$ and $\{v'^*_i\}$ be as before a pair of dual bases in $V'$ and $V'_*$. Let
\begin{align*}
&V = \mathrm{span} \{x_i\}_{i \in I_b} \cup \{z_i\}_{i \in I_a} \cup \{v'_i\}_{i \in I}, \\
&V_* = \mathrm{span} \{f_i\}_{i \in I_c \cup I_d} \cup \{v'^*_i\}_{i \in I},
\end{align*}
where $x_i$ and $z_i$ are as in Proposition \ref{propBasis} and $f_i$ is a collective notation for $y_i$ and $t_i$. We divide the $f_i$'s into two groups: those which pair non-degenerately with $\{z_i\}_{i \in I_a}$ we denote by $f'_i$, and the remaining ones we denote by $f''_i$. Next we define the subspaces
\begin{align*}
&V'' = \mathrm{span} \{x_i\}_{i \in I_b} \cup \{v'_i\}_{i \in I}, \\
&V''_* = \mathrm{span} \{f''_i\} \cup \{v'^*_i\}_{i \in I}.
\end{align*}
Then $V''$ and $V''_*$ pair non-degenerately and we set $\g'' = V'' \otimes V''_*$. %The four values which characterize the embedding $\g'' \subset \g$ are $b'' = 0$, $a'' = a$, $d'' = k$, $c'' = l$ for some integers $k$ and $l$. Therefore, this embedding satisfies the conditions of Lemma \ref{lemma2_embed}, hence %every simple $\g$-module $V_{\lambda,0}$ is completely reducible over $\g'' = V''\otimes V''_*$ and
The embedding $\g'' \subset \g$ satisfies the conditions of Lemma \ref{lemma2_embed}, hence
\begin{align} \label{eqEmbB}
V_{\lambda, 0} \cong \bigoplus_{\lambda''} m^{a}_{\lambda, \lambda''} V''_{\lambda'', 0},
\end{align}
where by $V''_{\lambda'', 0}$ we denote the simple tensor $\g''$-modules.

To see now how $V_{\lambda,0}$ decomposes over $\g'$ it is enough to see how each $V''_{\lambda'', 0}$ decomposes over $\g'$. %The four values which characterize the embedding $\g' \subset \g''$ are $b' =b$, $a' = 0$, $d' = k$, $c' = l$, for some $k$ and $l$. Therefore, this embedding satisfies the conditions of Lemma \ref{lemma1_embed}, hence %and for every simple $\g''$-module $V''_{\lambda'',0}$  we have
The embedding $\g' \subset \g''$ satisfies the conditions of Lemma \ref{lemma1_embed}, hence
\begin{align} \label{eqEmbA}
\overline{\soc}_{\g'}^{(r+1)} V''_{\lambda'', 0} \cong \bigoplus_{|\lambda'| = |\lambda''| - r} m^{b}_{\lambda'', \lambda'} V'_{\lambda', 0}.
\end{align}
Then (\ref{eqEmbB}) and (\ref{eqEmbA}) imply the statement of the theorem.
\end{proof}

An analogous statement holds for submodules of $V_*^{\otimes q}$. Here is the result.

\begin{thm} \label{thmVmu} Let $\g' \subset \g$ be an embedding which satisfies (\ref{eqCond1}). Then for any $V_{0, \mu} \subset V_*^{\otimes q}$
$$
\overline{\soc}_{\g'}^{(r+1)} V_{0, \mu} \cong \bigoplus_{\mu''} \bigoplus_{|\mu'| = |\mu''| - r}m^{c}_{\mu, \mu''} m^{d}_{\mu'', \mu'} V'_{0, \mu'}.
$$
\end{thm}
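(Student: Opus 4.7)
The plan is to mirror the proof of Theorem \ref{thmVlambda} verbatim, with the roles of $V$ and $V_*$ interchanged, and correspondingly the roles of $(N_a, N_b)$ and $(N_c, N_d)$, and of $V'$ and $V'_*$, interchanged as well. The statement to prove is entirely parallel: it concerns submodules of $V_*^{\otimes q}$ instead of $V^{\otimes p}$, and the two Gelfand-Tsetlin factors $m^c$ and $m^d$ take the place of $m^a$ and $m^b$.

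First I would establish a conatural analog of Lemma \ref{lemma1_embed}: if $\g' \subset \g$ satisfies (\ref{eqCond1}) with $c=0$, then
\[
\soc_{\g'} V_{0,\mu} = V'_{0,\mu}, \qquad \overline{\soc}_{\g'}^{(r+1)} V_{0,\mu} \cong \bigoplus_{|\mu'| = |\mu|-r} m^{d}_{\mu,\mu'}\, V'_{0,\mu'}.
\]
The argument is identical to that of Lemma \ref{lemma1_embed} applied to the short exact sequence $0 \to V'_* \oplus N_c \to V_* \to N_d \to 0$: Proposition \ref{propEmb1} (together with the remark following it, which removes the need for any assumption on the pairing between $N_a$ and $N_c$ when one of $p$ or $q$ is zero) gives the shape of the socle layers, and exhausting $\g'$ and $\g$ by nested finite-dimensional subalgebras $\g'_n \subset \g_{m_n}$ of signature $(1,0,d)$ (or of signature $(1,0,n_k)$ with $n_k \to \infty$ when $d=\infty$) lets us read off the multiplicities from the Gelfand-Tsetlin rule. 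The only small item to verify is that Proposition \ref{propIterGT} applies to the module $V^{n+k}_{0,\mu}$ with the same multiplicities $m^{k}_{\mu,\mu'}$; this is immediate since $V^{n+k}_{0,\mu}$ is the dual of $V^{n+k}_{\mu,0}$ and branching commutes with duality.

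Next I would establish a conatural analog of Lemma \ref{lemma2_embed}: if $\g' \subset \g$ satisfies (\ref{eqCond1}) with $d=0$, then $V_{0,\mu}$ is a completely reducible $\g'$-module and $V_{0,\mu} \cong \bigoplus_{\mu'} m^{c}_{\mu,\mu'} V'_{0,\mu'}$. Again the proof is a mechanical transcription: when $d=0$ the homomorphism $g : V_* \to N_d$ is zero, so Proposition \ref{propEmb1} (applied on the conatural side) forces complete reducibility, and the multiplicities are computed via the Gelfand-Tsetlin rule as above.

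Finally I would combine the two lemmas exactly as in Theorem \ref{thmVlambda}. Choosing bases of $V$ and $V_*$ as in Proposition \ref{propBasis}, I split $\{x_i\}_{i \in I_b}$ into those that pair non-degenerately with $\{t_i\}_{i \in I_c}$ and those that do not, and set
\[
V'' = \mathrm{span}\bigl(\{v'_i\}_{i \in I} \cup \{x'_i\}\bigr), \qquad V''_* = \mathrm{span}\bigl(\{v'^*_i\}_{i \in I} \cup \{y_i\}_{i \in I_d}\bigr),
\]
where $\{x'_i\}$ are the elements of $\{x_i\}$ dual to $\{t_i\}_{i \in I_c}$. Then $V''$ and $V''_*$ pair non-degenerately, so $\g'' := V'' \otimes V''_*$ is a well-defined intermediate subalgebra $\g' \subset \g'' \subset \g$. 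The embedding $\g'' \subset \g$ has $d'' = 0$ on the conatural side, so the second lemma gives $V_{0,\mu} \cong \bigoplus_{\mu''} m^{c}_{\mu,\mu''} V''_{0,\mu''}$, and the embedding $\g' \subset \g''$ has $c' = 0$, so the first lemma yields $\overline{\soc}_{\g'}^{(r+1)} V''_{0,\mu''} \cong \bigoplus_{|\mu'| = |\mu''| - r} m^{d}_{\mu'',\mu'} V'_{0,\mu'}$. Combining these two decompositions gives the claimed formula.

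I do not expect any genuine obstacle here: the statement is the formal dual of Theorem \ref{thmVlambda}, and each step in its proof has a clean counterpart on the conatural side. The only point requiring mild care is to keep track of signatures and duality correctly in the finite-dimensional Gelfand-Tsetlin step, to ensure that the multiplicity $m^{d}_{\mu,\mu'}$ really appears rather than some reindexed version.
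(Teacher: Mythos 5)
Your overall strategy is exactly the intended one: the paper gives no separate proof of Theorem \ref{thmVmu} (it is stated as the formal dual of Theorem \ref{thmVlambda}), and your conatural versions of Lemmas \ref{lemma1_embed} and \ref{lemma2_embed}, including the duality remark needed to apply the Gelfand--Tsetlin rule to $V^{n+k}_{0,\mu}$, transcribe correctly.

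However, your construction of the intermediate subalgebra $\g''$ in the final step is backwards, and as written the argument fails there. For the second lemma (the $d=0$ case) to apply to $\g'' \subset \g$ with multiplicities $m^{c}_{\mu,\mu''}$, you need $V_* \cong V''_* \oplus N_c$ as $\g''$-modules with $N_c = \mathrm{span}\{t_i\}$ acting trivially, i.e.\ you need $\left\langle V'', N_c \right\rangle = 0$. But you put into $V''$ precisely the elements $x'_i$ of $\{x_i\}_{i\in I_b}$ that are \emph{dual} to the $t_i$'s. Then $(x'_j \otimes w)\cdot t_i = -\left\langle x'_j, t_i\right\rangle w \neq 0$ for suitable $i,j$ and $w \in V''_*$, so $\mathrm{span}\{t_i\}$ is not even a $\g''$-submodule of $V_*$, the trivial part of $\soc_{\g''}V_*$ has dimension strictly less than $c$ in general, and the decomposition $V_{0,\mu} \cong \bigoplus_{\mu''} m^{c}_{\mu,\mu''} V''_{0,\mu''}$ is not justified. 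The correct mirror of the proof of Theorem \ref{thmVlambda} is the opposite selection: there the paper keeps in $V''_*$ only those $f''_i$ that pair \emph{trivially} with $N_a$ (and keeps all of $\{x_i\}$ in $V''$); here you should keep all of $\{y_i\}_{i\in I_d}$ in $V''_*$ (as you do) but define $V''$ as the span of $\{v'_i\}_{i\in I}$ together with those elements of $\{z_i\}_{i\in I_a}\cup\{x_i\}_{i\in I_b}$ (after adapting the basis) that annihilate $N_c$. With that choice $\left\langle V'', N_c\right\rangle = 0$, the embedding $\g''\subset\g$ has a $c$-dimensional trivial summand and no conatural quotient, the embedding $\g'\subset\g''$ has $c'=0$ and $d'=d$, and the rest of your argument goes through.
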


Now we are ready to state the theorem for any simple $V_{\lambda, \mu}$.

\begin{thm} \label{thmEmb1} Let $\g' \subset \g$ be an embedding of type I, i.e. which satisfies (\ref{eqCond1}) and such that $N_a$ and $N_c$ pair trivially. Then for any simple $\g$-module $V_{\lambda, \mu} \subset V^{\{p,q\}}$
\begin{align*}
%&\soc_{\g'} V_{\lambda, \mu} = (\soc_{\g'} V_{\lambda, 0} \otimes \soc_{\g'} V_{0, \mu})\cap V^{\{p,q\}}, \\
& \soc_{\g'}^{(r+1)} V_{\lambda, \mu} = (\sum_{n_1+n_2 = r}(\soc_{\g'}^{(n_1+1)} V_{\lambda, 0}) \otimes (\soc_{\g'}^{(n_2+1)} V_{0, \mu})) \cap V^{\{p,q\}}.
\end{align*}
Moreover,
\begin{align*}
\overline{\soc}_{\g'}^{(r+1)}V_{\lambda, \mu} \cong \bigoplus_{n_1+n_2=r}\bigoplus_{\lambda'', \mu''} \bigoplus_{\substack{|\lambda'| = |\lambda''| - n_1 \\ |\mu'| = |\mu''| - n_2}} m^{a}_{\lambda, \lambda''}m^b_{\lambda'', \lambda'}m^{c}_{\mu, \mu''} m^d_{\mu'', \mu'} V'_{\lambda', \mu'}.
\end{align*}
\end{thm}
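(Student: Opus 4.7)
The plan is to derive both displayed identities from Corollary \ref{corSocEmb1} together with Theorems \ref{thmVlambda} and \ref{thmVmu}. The key input from \cite{PSt} is the identification
$$V_{\lambda, \mu} = (V_{\lambda, 0} \otimes V_{0, \mu}) \cap V^{\{p,q\}}$$
coming from Weyl's construction, where $V_{\lambda, 0}$ and $V_{0, \mu}$ sit as $\g$-direct summands of $V^{\otimes p}$ and $V_*^{\otimes q}$ respectively.

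For the intersection formula, I would use property (\ref{eqPropSoc1}) to write $\soc_{\g'}^{(r+1)} V_{\lambda, \mu} = \soc_{\g'}^{(r+1)} V^{\{p,q\}} \cap V_{\lambda, \mu}$ and substitute the expression from Corollary \ref{corSocEmb1}. Because $V^{\otimes p}$ and $V_*^{\otimes q}$ are semisimple $\g$-modules with $V_{\lambda, 0}$ and $V_{0, \mu}$ as direct summands, property (\ref{eqPropSoc}) implies $\soc_{\g'}^{(n+1)} V^{\otimes p} \cap V_{\lambda, 0} = \soc_{\g'}^{(n+1)} V_{\lambda, 0}$ and the analogous equality on the conatural side. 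Intersecting each term $\soc_{\g'}^{(n_1+1)} V^{\otimes p} \otimes \soc_{\g'}^{(n_2+1)} V_*^{\otimes q}$ with $V_{\lambda, 0} \otimes V_{0, \mu}$ therefore reduces it to $\soc_{\g'}^{(n_1+1)} V_{\lambda, 0} \otimes \soc_{\g'}^{(n_2+1)} V_{0, \mu}$, which combined with $V_{\lambda, \mu} \subset V^{\{p,q\}}$ yields the first identity.

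For the direct-sum formula, set $A_{n_1} = \soc_{\g'}^{(n_1+1)} V_{\lambda, 0}$, $B_{n_2} = \soc_{\g'}^{(n_2+1)} V_{0, \mu}$, $F_r = \sum_{n_1+n_2=r} A_{n_1} \otimes B_{n_2}$, and $G_r = F_r \cap V_{\lambda, \mu}$. By the first part $G_r = \soc_{\g'}^{(r+1)} V_{\lambda, \mu}$, so the natural map $G_r/G_{r-1} \hookrightarrow F_r/F_{r-1}$ is an injection of $\g'$-modules. A standard tensor-product filtration computation gives
$$F_r/F_{r-1} \cong \bigoplus_{n_1+n_2=r} \overline{\soc}_{\g'}^{(n_1+1)} V_{\lambda, 0} \otimes \overline{\soc}_{\g'}^{(n_2+1)} V_{0, \mu},$$
and Theorems \ref{thmVlambda}, \ref{thmVmu} decompose this as a sum of $V'_{\lambda', 0} \otimes V'_{0, \mu'}$'s with multiplicities $m^{a}_{\lambda, \lambda''} m^{b}_{\lambda'', \lambda'} m^{c}_{\mu, \mu''} m^{d}_{\mu'', \mu'}$.

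The main obstacle will be to identify the image of $G_r/G_{r-1}$ inside this decomposition as precisely the sum of the simple submodules $V'_{\lambda', \mu'} = (V'_{\lambda', 0} \otimes V'_{0, \mu'}) \cap V'^{\{|\lambda'|, |\mu'|\}}$, one per summand. The containment $G_r/G_{r-1} \subseteq \soc_{\g'}(F_r/F_{r-1})$ is automatic since $G_r/G_{r-1}$ is semisimple, and \cite{PSt} identifies $\soc_{\g'}(V'_{\lambda', 0} \otimes V'_{0, \mu'}) = V'_{\lambda', \mu'}$. For the opposite inclusion I would, for each tuple $(n_1, n_2, \lambda'', \mu'', \lambda', \mu')$ and each multiplicity index, construct an explicit preimage: choose a highest weight vector of a copy of $V'_{\lambda', \mu'}$ in $F_r/F_{r-1}$, lift it to an element $w \in A_{n_1} \otimes B_{n_2}$ using the basis vectors $v'_i, z_i, x_i$ and $v'^*_i, t_i, y_i$ in the style of the proofs of Proposition \ref{propEmb1} and Lemma \ref{lemma1_embed}, and verify via Young symmetrization and the vanishing of all $\g$-contractions on $w$ that $w$ represents a nonzero class in $(V_{\lambda, \mu} \cap F_r)/(V_{\lambda, \mu} \cap F_{r-1})$ mapping to the chosen target vector.
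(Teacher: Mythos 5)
Your proposal is correct and follows essentially the same route as the paper: the first identity via Corollary \ref{corSocEmb1}, property (\ref{eqPropSoc1}), and the relation $V_{\lambda,\mu}=(V_{\lambda,0}\otimes V_{0,\mu})\cap V^{\{p,q\}}$, and the second via the filtration $F_r$, the standard tensor-product layer computation, Theorems \ref{thmVlambda}--\ref{thmVmu}, and the socle formula for $V'_{\lambda',0}\otimes V'_{0,\mu'}$ from \cite{PSt}. The only cosmetic difference is in the last step, where the paper pushes an arbitrary nonzero class of $S^r_{\lambda,\mu}/S^{r-1}_{\lambda,\mu}$ into $(S^r_{\lambda,\mu}\cap V^{\{p,q\}}+S^{r-1}_{\lambda,\mu})/S^{r-1}_{\lambda,\mu}$ by acting with a suitable $g\in U(\g')$, whereas you lift highest weight vectors explicitly; both are variants of the same element-level argument.
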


\begin{proof}
Note that 
\begin{align*}
&(\sum_{n_1+n_2 = r}(\soc_{\g'}^{(n_1+1)} V^{\otimes p}) \otimes (\soc_{\g'}^{(n_2+1)} V_*^{\otimes q}))\cap (V_{\lambda,0} \otimes V_{0, \mu}) = \\
& \sum_{n_1+n_2 = r}((\soc_{\g'}^{(n_1+1)} V_{\lambda,0}) \otimes (\soc_{\g'}^{(n_2+1)} V_{0, \mu})).
\end{align*}
Therefore, Corollary \ref{corSocEmb1} and the relation $V_{\lambda, \mu} = V^{\{p,q\}} \cap (V_{\lambda,0} \otimes V_{0, \mu})$ from \cite{PSt} imply
\begin{align*}
&\soc_{\g'}^{(r+1)} V_{\lambda, \mu} = (\soc_{\g'} ^{(r+1)}V^{\{p,q\}}) \cap V_{\lambda, \mu} = \\
%&(\sum_{n_1+n_2 = r}(\soc_{\g'}^{(n_1+1)} V^{\otimes p}) \otimes (\soc_{\g'}^{(n_2+1)} V_*^{\otimes q})) \cap V^{\{p,q\}} \cap V_{\lambda, \mu} = \\
%&(\sum_{n_1+n_2 = r}(\soc_{\g'}^{(n_1+1)} V^{\otimes p}) \otimes (\soc_{\g'}^{(n_2+1)} V_*^{\otimes q})) \cap V^{\{p,q\}} \cap V_{\lambda,0} \otimes V_{0, \mu}= \\
&(\sum_{n_1+n_2 = r}(\soc_{\g'}^{(n_1+1)} V_{\lambda, 0}) \otimes (\soc_{\g'}^{(n_2+1)} V_{0,\mu})) \cap V^{\{p,q\}},
\end{align*}
which proves the first part of the statement. To compute the layers of the socle filtration we notice the following. If $\{A_{n_1}\}$ and $\{B_{n_2}\}$ are families of modules over a ring or an algebra such that $A_{n_1} \subset A_{n_1+1}$ for all $n_1$ and $B_{n_2} \subset B_{n_2+1}$ for all $n_2$, then
\begin{align*}
&(\sum_{n_1+n_2 = r} A_{n_1} \otimes B_{n_2}) / (\sum_{n_1+n_2 = r-1} A_{n_1} \otimes B_{n_2}) \cong
\bigoplus_{n_1+n_2 = r} A_{n_1}/A_{n_1-1} \otimes B_{n_2}/B_{n_2-1}.
\end{align*}

In our case, if we set $A_{n_1} = \soc_{\g'}^{(n_1+1)} V_{\lambda, 0}$ and $B_{n_2} = \soc_{\g'}^{(n_2+1)} V_{0, \mu}$ we obtain
\begin{equation} \label{eqVlambdaVmu}
\begin{aligned}
&(\sum_{n_1+n_2 = r}(\soc_{\g'}^{(n_1+1)} V_{\lambda, 0}) \otimes (\soc_{\g'}^{(n_2+1)} V_{0, \mu})) / (\sum_{n_1+n_2 = r-1}(\soc_{\g'}^{(n_1+1)} V_{\lambda, 0}) \otimes (\soc_{\g'}^{(n_2+1)} V_{0, \mu})) \cong \\
&\bigoplus_{n_1+n_2 = r}((\overline{\soc}_{\g'}^{(n_1+1)} V_{\lambda, 0}) \otimes (\overline{\soc}_{\g'}^{(n_2+1)} V_{0,\mu})) \cong \\
&\bigoplus_{n_1+n_2=r}\bigoplus_{\lambda'', \mu''} \bigoplus_{\substack{|\lambda'| = |\lambda''| - n_1 \\ |\mu'| = |\mu''| - n_2}} m^{a}_{\lambda, \lambda''}m^b_{\lambda'', \lambda'}m^{c}_{\mu, \mu''} m^d_{\mu'', \mu'} V'_{\lambda', 0} \otimes V'_{0,\mu'},
\end{aligned}
\end{equation}
where the second isomorphism is a corollary of Theorems \ref{thmVlambda} and \ref{thmVmu}. 

For shortness, let us denote
\begin{align*}
S^r_{\lambda, \mu} = \sum_{n_1+n_2 = r} ({\soc}_{\g'}^{(n_1+1)} V_{\lambda, 0}) \otimes ({\soc}_{\g'}^{(n_2+1)} V_{0,\mu}).
\end{align*}
Then (\ref{eqVlambdaVmu}) and Theorem 2.3 from \cite{PSt} imply
\begin{align*}
\soc_{\g'} (S^r_{\lambda, \mu} / S^{r-1}_{\lambda, \mu}) \cong
&\bigoplus_{n_1+n_2=r}\bigoplus_{\lambda'', \mu''} \bigoplus_{\substack{|\lambda'| = |\lambda''| - n_1 \\ |\mu'| = |\mu''| - n_2}} m^{a}_{\lambda, \lambda''}m^b_{\lambda'', \lambda'}m^{c}_{\mu, \mu''} m^d_{\mu'', \mu'} V'_{\lambda', \mu'}.
\end{align*}

Clearly, $\overline{\soc}_{\g'}^{(r+1)} V_{\lambda, \mu} \subseteq \soc_{\g'} (S^r_{\lambda, \mu} / S^{r-1}_{\lambda, \mu})$.
To prove the opposite inclusion let $[u] \in S^r_{\lambda, \mu} / S^{r-1}_{\lambda, \mu}$ and $[u] \neq 0$. Let without loss of generality $u = c_{\lambda}(u_1)\otimes c_{\mu}(u_2)$, where $c_{\lambda}, c_{\mu}$ denote the Young symmetrizers corresponding to the partitions $\lambda, \mu$ (see (4.2) in \cite{FH}) and 
\begin{align*}
u_1 = x_{i_1} \otimes \dots \otimes x_{i_{n_1}} \otimes  u_1', \quad u_2 = y_{j_1} \otimes \dots \otimes y_{j_{n_2}} \otimes u_2'
\end{align*}
such that $n_1 + n_2 =r$ and $u_1' \in V'^{\otimes (p - n_1)}$, $u_2' \in V_*'^{\otimes (q-n_2)}$. Let $g \in U(\g')$ be such that $g \cdot (u_1'\otimes u_2') \in V'^{\{p-n_1,q-n_2\}}$. Then 
$$g \cdot [u] \in (S^r_{\lambda, \mu} \cap V^{\{p,q\}} + S^{r-1}_{\lambda, \mu}) /S^{r-1}_{\lambda, \mu}.
$$
Therefore,
\begin{align*}
& \soc_{\g'} (S^{r}_{\lambda, \mu} / S^{r-1}_{\lambda, \mu}) \subseteq
(S^{r}_{\lambda, \mu} \cap V^{\{p,q\}} + S^{r-1}_{\lambda, \mu}) / S^{r-1}_{\lambda, \mu} \cong \overline{\soc}_{\g'}^{(r+1)} V_{\lambda, \mu}.
\end{align*} 
Thus, $\overline{\soc}_{\g'}^{(r+1)} V_{\lambda, \mu} = \soc_{\g'} (S^r_{\lambda, \mu} / S^{r-1}_{\lambda, \mu})$. This proves the statement.
\end{proof}

\subsection{Embeddings of type II} \label{secGLEmb2}

Let $\g'\subset \g$ be an embedding of type II, i.e. such that
\begin{align*}
V \cong V' \oplus N_a, \quad V_* \cong V'_* \oplus N_c. 
\end{align*}
 
By definition (see \cite{PSt}), for any partitions $\lambda$ and $\mu$ with $|\lambda| = p$ and $|\mu| = q$ we have $V_{\lambda, \mu} = V^{\{p,q\}} \cap (V_{\lambda, 0} \otimes V_{0, \mu})$. Below we determine the socle filtrations of $V_{\lambda, 0} \otimes V_{0, \mu}$ and of $V^{\{p,q\}}$ and then use property (\ref{eqPropSoc2}) of socle filtrations to obtain the socle filtration for $V_{\lambda, \mu}$.

\begin{prop} \label{propEmb2VlambdaVmu} Let $\g'\subset \g$ be an embedding of type II. Then
\begin{itemize}
\item[(i)] every $V_{\lambda, 0} \subset V^{\otimes p}$ and every $V_{0,\mu} \subset V_*^{\otimes q}$ is completely reducible over $\g'$ and
\begin{align*}
&V_{\lambda, 0} \cong \bigoplus_{\lambda'} m^{a}_{\lambda, \lambda'} V'_{\lambda',0},
&V_{0, \mu} \cong \bigoplus_{\mu'} m^{c}_{\mu, \mu'} V'_{0,\mu'};
\end{align*}
\item[(ii)] for $V_{\lambda, 0} \otimes V_{0, \mu}$ we have
\begin{align*}
\overline{\soc}_{\g'}^{(r+1)} V_{\lambda, 0} \otimes V_{0, \mu} \cong \bigoplus_{\lambda', \mu'} \bigoplus_{\substack{|\lambda''| = |\lambda'|-r \\ |\mu''| = |\mu'|-r}} \bigoplus_{\gamma} m^{a}_{\lambda, \lambda'}m^{c}_{\mu, \mu'} c^{\lambda'}_{\lambda'', \gamma}c^{\mu'}_{\mu'', \gamma} V'_{\lambda'', \mu''}.
\end{align*}
\end{itemize}
\end{prop}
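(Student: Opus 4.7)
The plan is to reduce both parts to results that are already available, using the decomposition coming from part (i) together with known socle-filtration computations over $\g'$.

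For part (i), the point is that an embedding of type II has $N_b = V/\soc_{\g'}V = 0$ and $N_d = V_*/\soc_{\g'}V_* = 0$. Thus the short exact sequence (\ref{eqSES1}) splits with $f = 0$, and analogously for $V_*$. Hence Lemma \ref{lemma2_embed} applies directly to $V_{\lambda, 0}$, giving the displayed decomposition with Gelfand--Tsetlin multiplicities $m^a_{\lambda, \lambda'}$. The statement for $V_{0,\mu}$ follows from the mirror version of Lemma \ref{lemma2_embed}, obtained by repeating its proof with the roles of $V$ and $V_*$ interchanged and $b$ replaced by $d$; since $d = 0$ as well, the module $V_{0,\mu}$ is $\g'$-semisimple and decomposes with multiplicities $m^c_{\mu, \mu'}$.

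For part (ii), I would combine part (i) with the known socle filtration of a tensor product of simple tensor modules. Distributing the tensor product over the direct sums produced in part (i), we obtain the isomorphism of $\g'$-modules
\begin{align*}
V_{\lambda, 0} \otimes V_{0, \mu} \cong \bigoplus_{\lambda', \mu'} m^{a}_{\lambda, \lambda'} m^{c}_{\mu, \mu'}\, V'_{\lambda', 0} \otimes V'_{0, \mu'}.
\end{align*}
Since the socle filtration commutes with direct sums by (\ref{eqPropSoc}), it then suffices to compute $\overline{\soc}_{\g'}^{(r+1)} (V'_{\lambda', 0} \otimes V'_{0, \mu'})$ for each pair $(\lambda', \mu')$.

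The latter is a standard computation over $\gl(\infty)$. Using the results of \cite{PSt} on the socle filtration of $V'^{\otimes(p', q')}$ together with property (\ref{eqPropSoc1}) applied to the $\g'$-submodule $V'_{\lambda', 0} \otimes V'_{0, \mu'} \subset V'^{\otimes(p', q')}$, one obtains
\begin{align*}
\overline{\soc}_{\g'}^{(r+1)} (V'_{\lambda', 0} \otimes V'_{0, \mu'}) \cong \bigoplus_{\substack{|\lambda''| = |\lambda'| - r \\ |\mu''| = |\mu'| - r}} \bigoplus_{\gamma} c^{\lambda'}_{\lambda'', \gamma} c^{\mu'}_{\mu'', \gamma}\, V'_{\lambda'', \mu''},
\end{align*}
where the $\gamma$ summed over are the partitions of size $r$ that occur as the contracted part via the natural pairing between $V'$ and $V'_*$. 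Substituting this into the previous decomposition yields the claimed formula.

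The only genuinely non-cosmetic step is the tensor-product socle filtration over $\g'$; the rest is bookkeeping with direct sums. I expect this to be the main obstacle if one insists on a self-contained argument, since it is not proved again in the present paper. Either one cites the relevant statement from \cite{PSt} directly, or one reproves it by applying the Weyl construction of $V'_{\lambda', 0}$ and $V'_{0, \mu'}$ together with iterated contractions on $V'^{\otimes p'} \otimes V'^{\otimes q'_*}$, identifying the layers via the Littlewood--Richardson rule. Either way, once this input is in hand, part (ii) follows immediately from part (i) and (\ref{eqPropSoc}).
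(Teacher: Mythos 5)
Your proposal is correct and follows essentially the same route as the paper: part (i) is obtained from the semisimplicity of $V$ and $V_*$ over $\g'$ with multiplicities computed as in Lemmas \ref{lemma1_embed}--\ref{lemma2_embed} (your observation that $b=d=0$ lets you quote Lemma \ref{lemma2_embed} verbatim is exactly the intended reduction), and part (ii) distributes the tensor product over the direct sums, applies property (\ref{eqPropSoc}), and invokes the socle filtration of $V'_{\lambda',0}\otimes V'_{0,\mu'}$ from Theorem 2.3 of \cite{PSt}. The external input you flag as the ``only genuinely non-cosmetic step'' is indeed handled in the paper by a direct citation of that theorem, so no further argument is needed.
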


\begin{proof}
Part (i). Since $V$ is semisimple over $\g'$, then so is $V^{\otimes p}$ and similarly for $V_*^{\otimes q}$. Therefore, every $V_{\lambda, 0} \subset V^{\otimes p}$ and every $V_{0,\mu} \subset V_*^{\otimes q}$ is semisimple as well. To obtain the exact multiplicities, we proceed as in the proof of Lemma \ref{lemma1_embed}.

Part (ii). From part (i) we have
\begin{align*}
V_{\lambda, 0} \otimes  V_{0, \mu} \cong \bigoplus_{\lambda', \mu'}  m^{a}_{\lambda, \lambda'} m^{c}_{\mu, \mu'} V'_{\lambda',0} \otimes V'_{0,\mu'}.
\end{align*}
Hence, property (\ref{eqPropSoc}) of socle filtrations implies
\begin{align*}
\soc^{(r)}_{\g'} (V_{\lambda, 0} \otimes  V_{0, \mu}) \cong \bigoplus_{\lambda', \mu'}  m^{a}_{\lambda, \lambda'} m^{c}_{\mu, \mu'} \soc^{(r)}_{\g'} (V'_{\lambda',0} \otimes V'_{0,\mu'}).
\end{align*}
Then, using Theorem 2.3 in \cite{PSt}, we obtain the desired formula.
\end{proof}

\begin{prop} \label{propEmb2Soc1}Let $\g' \subset \g$ be an embedding of type II. Then for $a \in \ZZ_{\geq 0} \sqcup \{\infty\}$ and $a > p+q-2$,
\begin{align*}
\overline{\soc}_{\g'}^{(r+1)}V^{\{p,q\}} \cong \bigoplus_{\{I_1, \dots, I_r\}}\bigoplus_{k = 0}^{p-r} \bigoplus_{l=0}^{q-r} \binom{p-r}{k} \binom{q-r}{l} N_a^{\{k,l\}}\otimes V'^{\{p-r-k,q-r-l\}},
\end{align*}
where $N_a^{\{k,l\}}$ is as in the proof of Proposition \ref{propEmb2Appr1}.
\end{prop}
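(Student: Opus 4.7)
The plan is to upgrade the injective $\g'$-module homomorphism $\Psi=\bigoplus_{\{I_1,\dots,I_r\}}\Phi_{I_1,\dots,I_r}$ produced implicitly in the proof of Proposition \ref{propEmb2Appr1} into a bona fide isomorphism
$$\overline{\soc}_{\g'}^{(r+1)}V^{\{p,q\}}\;\cong\;\bigoplus_{\{I_1,\dots,I_r\}}\bigl(S_{p-r,q-r}^{(1)}\cap V^{\{p-r,q-r\}}\bigr),$$
and then to identify the target with the claimed right-hand side. Two tasks remain: computing $S_{p-r,q-r}^{(1)}\cap V^{\{p-r,q-r\}}$ explicitly, and proving that $\Psi$ is surjective.

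For the first task, observe that in a type II embedding the natural pairing is block-orthogonal: $\left\langle v',t\right\rangle=0$ and $\left\langle z,v'^*\right\rangle=0$ whenever $v'\in V'$, $v'^*\in V'_*$, $z\in N_a$, $t\in N_c$. Hence every tensor of the form $u\otimes v'$ with $u\in N_a^{\{k,l\}}$ and $v'\in V'^{\{p-r-k,q-r-l\}}$ is annihilated by the full contractions $\tilde\Phi_J$ that cut out $V^{\{p-r,q-r\}}$, and therefore belongs to $V^{\{p-r,q-r\}}$. Combined with the decomposition of $S_{p-r,q-r}^{(1)}$ given in the proof of Proposition \ref{propEmb2Appr1}, this yields
$$S_{p-r,q-r}^{(1)}\cap V^{\{p-r,q-r\}}\;\cong\;\bigoplus_{k=0}^{p-r}\bigoplus_{l=0}^{q-r}\binom{p-r}{k}\binom{q-r}{l}\,N_a^{\{k,l\}}\otimes V'^{\{p-r-k,q-r-l\}},$$
one such summand for each unordered $r$-subset $\{I_1,\dots,I_r\}$, matching the claimed formula.

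For surjectivity, fix an element $\omega\in N_a^{\{k,l\}}\otimes V'^{\{p-r-k,q-r-l\}}$ in the summand indexed by $\{I_1=(i_1,j_1),\dots,I_r=(i_r,j_r)\}$ and construct a preimage as follows. Choose dual pairs $(z_{s_1},t_{s_1}),\dots,(z_{s_r},t_{s_r})$ in $N_a\times N_c$ with $\left\langle z_{s_a},t_{s_b}\right\rangle_t=\delta_{ab}$, none of which occurs in $\omega$, and form the naive insertion $u_0$ obtained by placing $z_{s_a}$ in position $i_a$ of the $V^{\otimes p}$-factor and $t_{s_a}$ in position $j_a$ of the $V_*^{\otimes q}$-factor, with $\omega$ occupying the remaining positions. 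By construction $\Phi_{I_1,\dots,I_r}(u_0)=\omega$, but $u_0\notin V^{\{p,q\}}$ because each inserted pair contributes a non-zero trace under $\tilde\Phi_{(i_a,j_a)}$. Cancel these traces iteratively by Weyl-type correction terms of the shape $v'_k\otimes(\cdots)\otimes v'^*_k$, mirroring the explicit formula for the element $u_1$ in the proof of Proposition \ref{propEmb2Appr1}; each correction strictly reduces the number of inserted $(z,t)$-pairs visible to $\tilde\Phi$, so after finitely many steps one obtains $u\in V^{\{p,q\}}$ with $\Phi_{I_1,\dots,I_r}(u)=\omega$, and $\Phi_{I'_1,\dots,I'_r}(u)\in\soc_{\g'}^{(r)}V^{\otimes(p-r,q-r)}$ for every $\{I'_1,\dots,I'_r\}\neq\{I_1,\dots,I_r\}$, so it represents zero in the quotient.

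The hypothesis $a>p+q-2$ enters exactly here: it guarantees that, after earmarking the $r$ inserted pairs $(z_{s_a},t_{s_a})$ and the basis vectors of $N_a$, $N_c$ occurring in $\omega$, strictly more basis vectors of $N_a$ and $N_c$ remain available so that each trace-free subspace $N_a^{\{k,l\}}$ appearing in the target attains its generic isotypic structure, allowing the inductive trace-cancellation to proceed without collisions. Without this, certain target summands would degenerate and $\Psi$ could fail to hit them. The main obstacle is precisely the explicit construction of Step two: one must verify that the Weyl-style trace corrections neither disturb the distinguished contraction $\Phi_{I_1,\dots,I_r}(u)=\omega$ nor produce, at any other index set of the same size, a contraction that survives in $\overline{\soc}_{\g'}^{(r+1)}V^{\otimes(p-r,q-r)}$. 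Once these checks are in place, surjectivity together with the injectivity from Proposition \ref{propEmb2Appr1} gives the isomorphism, and Step one identifies it with the claimed decomposition.
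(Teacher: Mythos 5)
Your proposal follows essentially the same route as the paper: injectivity is taken from the homomorphism of Proposition \ref{propEmb2Appr1}, surjectivity is proved by inserting $r$ fresh dual pairs $(z_{i},t_{i})$ and correcting the resulting traces with Weyl-type terms $v'_k\otimes(\cdots)\otimes v'^*_k$, and the target is identified with $\bigoplus_{k,l}\binom{p-r}{k}\binom{q-r}{l}N_a^{\{k,l\}}\otimes V'^{\{p-r-k,q-r-l\}}$ via the decomposition of the first socle layer. The one point to tighten is that for finite $a$ a fresh pair need not exist for an arbitrary $\omega$ (a sum of monomials may exhaust all of $N_a$); as in the paper, one should first reduce by linearity to indecomposable elements of $\soc_{\g'}V^{\{p-r,q-r\}}$, which involve at most $p+q-2r$ distinct indices, so that $a>p+q-2$ indeed leaves $r$ unused dual pairs.
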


\begin{proof}
Equation (\ref{eqEmb2SS}) from the proof of Proposition \ref{propEmb2Appr1} implies that
\begin{align*}
\bigoplus_{\{I_1, \dots, I_r\}} \Phi_{I_1, \dots, I_r}: \overline{\soc}_{\g'}^{(r+1)}V^{\{p,q\}} \rightarrow \bigoplus_{\{I_1, \dots, I_r\}} \soc_{\g'} V^{\{p-r, q-r\}}
\end{align*}
is an injective homomorphism of $\g'$-modules. Now we will show that if $a > p+q -2$, then for each $r$ the above homomoprhism is also surjective. 
Without loss of generality fix the following collection of index pairs $I_1 = (1,q-r+1), \dots, I_r = (r, q)$. %Let $v$ be an indecomposable element of the copy of $\soc_{\g'} V^{\{p-r, q-r\}}$ which corresponds to the chosen collection. 
Let $v$ be an indecomposable element of $\soc_{\g'} V^{\{p-r, q-r\}}$. By indecomposable here we mean that $v$ cannot be decomposed as a sum $v = v' + v''$ such that all monomials in $v'$ and $v''$ belong to $v$ and each of $v'$ and $v''$ is also an element of $\soc_{\g'} V^{\{p-r, q-r\}}$. Then $v$ contains at most $p+q -2r$ entries with distinct indices from the pair of dual bases $\{z_i\}$, $\{t_i\}$. Let $i_1, \dots, i_r$ be indices from $I_a$ such that neither $z_{i_k}$ nor $t_{i_k}$ for $k=1, \dots, r$ enters the expression of $v$. These exist thanks to the condition $a > p+q-2$. In addition, let $v'_k$ and $v'^*_k$ be a pair of dual elements respectively from $V'$ and $V_*'$ which do not enter the expression of $v$. Then the vector
$$
u = z_{i_1}\otimes \dots \otimes z_{i_r}\otimes v\otimes t_{i_1} \otimes \dots \otimes t_{i_r} - u_1
$$
where 
\begin{align*}
u_1 = &v'_k\otimes z_{i_2}\otimes \dots \otimes z_{i_r}\otimes v\otimes v'^*_k\otimes t_{i_2}\otimes \dots\otimes t_{i_r} + \\
&z_{i_1}\otimes v'_k\otimes z_{i_3} \otimes \dots \otimes z_{i_r}\otimes v\otimes t_{i_1}\otimes v'^*_k \otimes t_{i_3} \otimes \dots\otimes t_{i_r} + \dots + \\
&z_{i_1}\otimes \dots \otimes z_{i_{r-1}}\otimes v'_k\otimes v\otimes t_{i_1}\otimes \dots\otimes t_{i_{r-1}} \otimes v'^*_k
\end{align*}
belongs to $\soc_{\g'}^{(r+1)}V^{\{p,q\}}$  and 
$$\bigoplus_{\{I_1, \dots, I_r\}}\Phi_{I_1, \dots, I_r} (u) = v.
$$
This proves surjectivity. Then the statement follows from equation (\ref{eqEmb2Soc}) in the proof of Proposition \ref{propEmb2Appr1}.
\end{proof}

In order to obtain an exact expression for the layers of the socle filtraiton of $V^{\{p,q\}}$ also in the cases when $a \leq p+q-2$ we will use another approach. This approach covers all cases in which $a$ is finite. Therefore, in what follows, we fix $a \in \ZZ_{\geq 0}$. As is done in \cite{PSt}, for any index pair $I=(i,j)$ as above we define the inclusion
\begin{align*}
\Psi^{a}_I : N_a^{\{p-1, q-1\}} \rightarrow N_a^{\otimes (p,q)}
\end{align*}
given by
\begin{align*}
&u_1\otimes \dots \otimes u_{p-1} \otimes u_1^* \otimes \dots \otimes u_{q-1}^* \mapsto
\sum_{k=1}^{a} \dots u_{i-1}\otimes z_k\otimes u_{i+1}\otimes \dots \otimes u^*_{j-1}\otimes t_k\otimes u^*_{j+1}\otimes \dots,
\end{align*}
where $u_i$ is an arbitrary element in $N_a$ and $u^*_j$ is an arbitrary element in $N_c$. Similarly, for any disjoint collection of index pairs $I_1, \dots, I_r$, where $r = 1, \dots, \min{(p,q)}$, we define the inclusion 
\begin{align*}
\Psi^{a}_{I_1, \dots, I_r} : N_a^{\{p-r, q-r\}} \rightarrow N_a^{\otimes (p,q)}
\end{align*}
as the sum of the $r$-fold insertions of all possible ordered collections of $r$ terms of the form $z_i\otimes t_i$, including collections with repeating terms. Then, following \cite{PSt}, we denote
\begin{align*}
(N_a)^{\{p,q\}}_r = \sum_{\{I_1,\dots, I_r\}} \im \Psi^{a}_{I_1, \dots, I_r}.
\end{align*}
Furthermore, we set $(N_a)_0^{\{k,l\}} = N_a^{\{k,l\}}$. 
It is stated in \cite{PSt} that for all $a$ we have the direct sum decomposition
\begin{align*}
N_a^{\otimes (p,q)}= N_a^{\{p,q\}} \oplus (N_a)^{\{p,q\}}_1 \oplus \dots \oplus (N_a)^{\{p,q\}}_l,
\end{align*}
where $l = \min{(p,q)}$.

\begin{prop} \label{propEmb2Appr2} Let $\g' \subset \g$ be an embedding of type II such that $a = \dim N_a \in \ZZ_{\geq 0}$. Then
%\begin{align*}
%\soc^{(r+1)}_{\g'} V^{\{p,q\}} \cong \bigoplus_{k=0}^p \bigoplus_{l=0}^q \binom{p}{k} \binom{q}{l} (N_a^{\{k,l\}} \oplus (N_a)^{\{k,l\}}_1 \oplus \dots \oplus (N_a)^{\{k,l\}}_r) \otimes V'^{\{p-k, q-l\}}.
%\end{align*}
%Moreover, 
\begin{align*}
\overline{\soc}^{(r+1)}_{\g'}  V^{\{p,q\}} \cong \bigoplus_{k=0}^p \bigoplus_{l=0}^q \binom{p}{k} \binom{q}{l} (N_a)^{\{k,l\}}_r \otimes V'^{\{p-k, q-l\}}.
\end{align*}
\end{prop}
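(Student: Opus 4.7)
The plan is to exploit the $\g'$-module decomposition
\[ V^{\otimes(p,q)} \cong \bigoplus_{k=0}^{p}\bigoplus_{l=0}^{q} \binom{p}{k}\binom{q}{l}\, N_a^{\otimes(k,l)} \otimes V'^{\otimes(p-k,q-l)} \]
coming from $V = V' \oplus N_a$, $V_* = V'_* \oplus N_c$, together with the characterization $\soc^{(r)}_{\g'} V^{\{p,q\}} = (\bigcap_{I_1, \dots, I_r} \ker \Phi_{I_1, \dots, I_r}) \cap V^{\{p,q\}}$ from Proposition~\ref{propEmb2Appr1}. The binomial factors record the choice of positions $S \subseteq \{1, \dots, p\}$ and $T \subseteq \{1, \dots, q\}$ for the $N_a$- and $N_c$-entries. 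For an embedding of type II the bilinear form splits as the direct sum of the pairings on $V' \times V'_*$ and on $N_a \times N_c$, with all cross-pairings vanishing, so the contractions defining $V^{\{p,q\}}$ restricted to a fixed $(S,T)$-summand decouple into a $V'$-contraction and an $N_a$-contraction.

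For each $s \leq \min(k, l)$ and each pair of positions $(S,T)$, I would construct a $\g'$-module injection
\[ \iota^{s}_{S,T}:\; (N_a)^{\{k,l\}}_s \otimes V'^{\{p-k,q-l\}} \longrightarrow V^{\{p,q\}} \]
by placing the $(N_a)^{\{k,l\}}_s$-factor at positions $(S,T)$ and the $V'^{\{p-k,q-l\}}$-factor at the complementary positions, then adding a trace-balancing correction built from analogous $V'$-side insertions so that the image is annihilated by every full contraction on $V^{\otimes(p,q)}$. Next I would verify, by a direct computation of $\Phi_{J_1, \dots, J_t} \circ \iota^{s}_{S,T}$ using the definition of $(N_a)^{\{k,l\}}_s$ as the span of $\Psi^{a}_{I_1, \dots, I_s}$-images, that $\iota^{s}_{S,T}$ lands in $\soc^{(s+1)}_{\g'} V^{\{p,q\}} \setminus \soc^{(s)}_{\g'} V^{\{p,q\}}$. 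Summing over $(k,l,S,T)$ with $s = r$ then produces a $\g'$-homomorphism into $\overline{\soc}^{(r+1)}_{\g'} V^{\{p,q\}}$, and surjectivity would be established inductively from the $r = 0$ case recorded in (\ref{eqEmb2Soc}) together with the injective comparison map
\[ \bigoplus_{\{I_1, \dots, I_r\}}\Phi_{I_1, \dots, I_r}: S^{(r+1)}_{p,q}/S^{(r)}_{p,q} \hookrightarrow \bigoplus_{\{I_1, \dots, I_r\}} S^{(1)}_{p-r,q-r} \]
from the proof of Proposition~\ref{propEmb2Appr1}, after tracing through the restriction to $V^{\{p,q\}}$.

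The main obstacle is the design of the trace-balancing correction entering $\iota^{s}_{S,T}$: a naive insertion of $\Psi^{a}_{I_1, \dots, I_s}(u')$ at positions $(S,T)$ generically fails to lie in $V^{\{p,q\}}$ because each formal trace pair $\sum_m z_m \otimes t_m$ contributing to $(N_a)^{\{k,l\}}_s$ has nonzero total trace under the full bilinear form, and this contribution must be canceled by subtracting analogous terms built from dual pairs in $V' \times V'_*$ in a manner compatible with the $\g'$-action and without prematurely lowering the socle degree detected by the $\Phi$-maps.
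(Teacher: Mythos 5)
Your overall strategy coincides with the paper's: everything is reduced to Proposition~\ref{propEmb2Appr1} (the identification $\soc^{(r)}_{\g'}V^{\{p,q\}}=\bigcap\ker\Phi_{I_1,\dots,I_r}$) together with the decomposition $N_a^{\otimes(k,l)}=\bigoplus_s (N_a)^{\{k,l\}}_s$ quoted from \cite{PSt}, with the binomial factors recording the positions of the $N_a$- and $N_c$-entries. The only difference in direction is that the paper writes an arbitrary $u\in\bigcap\ker\Phi_{I_1,\dots,I_{r+1}}$ as $u_1+u_2$ with $u_1$ in the displayed direct sum and $u_2\in\bigcap\ker\Phi_{I_1,\dots,I_r}$, whereas you construct section maps; the balanced representatives you need are exactly the elements $z_{i_1}\otimes\dots\otimes z_{i_r}\otimes v\otimes t_{i_1}\otimes\dots\otimes t_{i_r}-u_1$ written out explicitly in the proof of Proposition~\ref{propEmb2Soc1}, so this part is available.

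There is, however, one step that fails as literally stated: for $s\geq 1$ there is no $\g'$-module injection $\iota^{s}_{S,T}\colon (N_a)^{\{k,l\}}_s\otimes V'^{\{p-k,q-l\}}\to V^{\{p,q\}}$. Already for $p=q=1$, $s=1$ (and $a\geq 1$) the source is the trivial one-dimensional module and such a map would split the top layer of $V^{\{1,1\}}$ off as a direct summand, contradicting the fact that $V^{\{1,1\}}$ has Loewy length $2$ over $\g'$. Concretely, the correction you must subtract from $\sum_m z_m\otimes t_m$ is an element of $V'\otimes V'_*$ of trace $a\neq 0$, and no such element is $\g'$-invariant, so no choice of trace-balancing correction is compatible with the $\g'$-action into $V^{\{p,q\}}$ itself. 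The repair is standard and small: define $\iota^{s}_{S,T}$ only as a linear map whose image lies in $\soc^{(s+1)}_{\g'}V^{\{p,q\}}$, and observe that it becomes a $\g'$-module homomorphism after composing with the projection onto $\overline{\soc}^{(s+1)}_{\g'}V^{\{p,q\}}$: both $g\cdot(\text{correction})$ and the difference of any two admissible corrections have strictly fewer $N_a\times N_c$ trace pairs and hence are killed by all $s$-fold contractions $\Phi_{I_1,\dots,I_s}$, i.e.\ they lie in $\soc^{(s)}$. With this adjustment your argument closes: injectivity on the layer follows by composing with $\bigoplus\Phi_{I_1,\dots,I_r}$ as in (\ref{eqEmb2SS}), and surjectivity follows because every class in $\overline{\soc}^{(r+1)}_{\g'}V^{\{p,q\}}$ is represented by such a balanced element.
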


\begin{proof}
Note that every element $u \in \bigcap_{I_1, \dots, I_{r+1}} \ker \Phi_{I_1, \dots, I_{r+1}}$ can be written as $u = u_1 + u_2$, where 
\begin{align*}
&u_1 \in \bigoplus_{k=0}^p \bigoplus_{l=0}^q \binom{p}{k} \binom{q}{l} (N_a)^{\{k,l\}}_r \otimes V'^{\{p-k, q-l\}},
\end{align*}
\begin{align*}
&u_2 \in \bigcap_{I_1, \dots, I_{r}} \ker \Phi_{I_1, \dots, I_{r}}.
\end{align*}
Thus the statement follows from Proposition \ref{propEmb2Appr1}.
\end{proof}

Next, we check that Proposition \ref{propEmb2Soc1} and Proposition \ref{propEmb2Appr2} give the same formulas for finite dimension $a$ with $a > p +q -2$. For $r=0$ this is obvious, so we prove it for $r \geq 1$.
Note that for $k$ or $l$ smaller than $r$ we have $(N_a)^{\{k,l\}}_r = 0$ and so the formula in Proposition \ref{propEmb2Appr2} can be rewritten as
\begin{align*}
\overline{\soc}^{(r+1)}_{\g'}  V^{\{p,q\}} \cong \bigoplus_{k=0}^{p-r} \bigoplus_{l=0}^{q-r} \binom{p}{k+r} \binom{q}{l+r} (N_a)^{\{k+r,l+r\}}_r \otimes V'^{\{p-k-r, q-l-r\}}.
\end{align*}
%Comparing this formula with the expression from Proposition \ref{propEmb2Appr1} we obtain
Hence, we have to show that
\begin{align} \label{identity}
\binom{p}{k+r}\binom{q}{l+r} (N_a)^{\{k+r,l+r\}}_r = \bigoplus_{\{I_1, \dots, I_r\}}\binom{p-r}{k}\binom{q-r}{l} N_a^{\{k,l\}},
\end{align}
where $i \in \{i,\dots, p\}$ and $j \in \{1, \dots, q \}$. Having in mind that the set of all such collections of $r$ disjoint index pairs $I_1, \dots, I_r$ has $\binom{p}{r}\binom{q}{r}r!$ elements, we can further rewrite (\ref{identity}) as
\begin{align} \label{eqIdent}
(N_a)^{\{k+r,l+r\}}_r = \bigoplus_{\{J_1, \dots, J_r\}} N_a^{\{k,l\}},
\end{align} 
where the sum runs over all collections of disjoint index pairs $J_1, \dots, J_r$ with $i \in \{1, \dots, k+r\}$ and $j \in \{1, \dots, l+r\}$.
Formula (\ref{eqIdent}) holds when %the images of $\Psi^{a}_{J_1, \dots, J_r}$ are all disjoint and this is precisely when $a > k+l$. 
$$\sum_{\{I_1,\dots, I_r\}} \im \Psi^{a}_{I_1, \dots, I_r} = \bigoplus_{\{I_1,\dots, I_r\}} \im \Psi^{a}_{I_1, \dots, I_r}$$ 
which is true precisely when $a > k+l$.
And since $k + l \leq p+q-2 < a$ for all $r \geq 1$, this completes the proof.

Propositions \ref{propEmb2Soc1} and \ref{propEmb2Appr2} imply that in order to determine the layers of the socle filtration of $V^{\{p,q\}}$ we need to determine the dimensions of the trivial $\g'$-modules $N_a^{\{p,q\}}$ for all $a \in \ZZ_{\geq 0} \sqcup \{\infty\}$, and the dimensions of the modules $(N_a)^{\{p,q\}}_r$ for $a \in \ZZ_{\geq 0}$. Notice that $N_a^{\{p,q\}}$ has the same dimension as the $\gl(a)$-module $V_a^{\{p,q\}}$. In particular, if $a = \infty$, then $V_a^{\{p,q\}}$ is just the $\gl(\infty)$-module $V^{\{p,q\}}$, which is obviously infinite dimensional. Similarly, $(N_a)^{\{p,q\}}_r$ has the same dimension as the $\gl(a)$-module $(V_a)_r^{\{p,q\}}$ (see \cite{PSt} for the notation). Thus, it is enough to determine the dimensions of the modules $V_a^{\{p,q\}}$ and $(V_a)_r^{\{p,q\}}$ for any finite $a$.

Schur-Weyl duality (see, e.g. \cite{FH}) yields
\begin{align} \label{eqSchur}
V_a^{\otimes (p,q)} \cong \bigoplus_{\substack{|\lambda| = p \\ |\mu| = q}} V^{a}_{\lambda, 0} \otimes V^{a}_{0,\mu} \otimes (H_{\lambda}\otimes H_{\mu}).
\end{align}
Here, $H_{\lambda}$ (resp., $H_{\mu}$) denotes the irreducible representation of the symmetric group $\SS_{p}$ (resp., $\SS_q$) corresponding to the partition $\lambda$ (resp., $\mu$). $V^{a}_{\lambda, 0}$ denotes as before the irreducible $\gl(a)$-module with highest weight $(\lambda, 0)$.

Furthermore, we have that
\begin{align*}
V^{a}_{\lambda, 0} \otimes V^{a}_{0,\mu} \cong \bigoplus_{\lambda', \mu'} \tilde{c}^{\lambda, \mu}_{\lambda', \mu'} V^{a}_{\lambda', \mu'},
\end{align*}
where
\begin{align} \label{eqLRCoeff}
\tilde{c}^{\lambda, \mu}_{\lambda', \mu'} = \bigoplus_{\gamma} c^{\lambda}_{\lambda', \gamma}c^{\mu}_{\mu', \gamma}
\end{align}
for $a > p+q$ (see e.g. \cite{K}, \cite{HTW}). For $a \leq p+q$, $\tilde{c}^{\lambda, \mu}_{\lambda', \mu'}$ can be obtained from (\ref{eqLRCoeff}) by the modification rules %of Section 2 
in \cite{K}. Then
\begin{align*}
V_a^{\otimes (p,q)} \cong \bigoplus_{\substack{|\lambda| = p \\ |\mu| = q}} \bigoplus_{\lambda', \mu'} \tilde{c}^{\lambda, \mu}_{\lambda', \mu'} V^{a}_{\lambda', \mu'} \otimes (H_{\lambda}\otimes H_{\mu}).
\end{align*}

It is stated in \cite{PSt} that
$
V_a^{\otimes (p,q)} = (V_a)_0^{\{p,q\}} \oplus (V_a)_1^{\{p,q\}} \oplus \dots \oplus (V_a)_l^{\{p,q\}}.
$
Furthermore, if $\Phi^{a}_{I_1, \dots,I_{r}}$ denotes the $r$-fold contraction with respect to the bilinear pairing on $V_a \times V^*_a$ then
\begin{align*}
\bigcap_{I_1, \dots, I_{r+1}} \ker \Phi^{a}_{I_1, \dots,I_{r+1}} = (V_a)_0^{\{p,q\}} \oplus (V_a)_1^{\{p,q\}} \oplus \dots \oplus (V_a)_r^{\{p,q\}}
\end{align*}
for $r = 0, \dots, l$.

Next, we prove that each $(V_a)_r^{\{p,q\}}$ consists of simple modules $V^{a}_{\lambda, \mu}$ with $|\lambda| = p-r$ and $|\mu| = q-r$. On the one hand, (\ref{eqSchur}) implies that a simple module $V^{a}_{\lambda, \mu}$ with $|\lambda| = p-r$ and $|\mu| = q-r$ cannot be realized as a submodule of $V_a^{\otimes (p-r-1, q-r-1)}$. Hence, $V^{a}_{\lambda, \mu} \subset \bigcap_{I_1, \dots, I_{r+1}} \ker \Phi^{a}_{I_1, \dots,I_{r+1}}$. %On the other hand, at least one copy of the chosen module $V^{a}_{\lambda, \mu}$ belongs to the image of a contraction $\Phi^{a}_{I_1, \dots,I_{r}}$. Therefore, this holds for each copy of $V^{a}_{\lambda, \mu}$ inside $V_a^{\otimes (p,q)}$, i.e . $V^{a}_{\lambda, \mu} \notin \bigcap_{I_1, \dots, I_{r}} \ker \Phi^{a}_{I_1, \dots,I_{r}}$. 
%On the other hand, for each copy of the module $V^{a}_{\lambda, \mu}$ inside $V_a^{\otimes (p,q)}$ there exists a contraction $\Phi^{a}_{I_1, \dots,I_{r}}$ such that $V^{a}_{\lambda, \mu}$ is a submodule of its image, hence $V^{a}_{\lambda, \mu} \notin \bigcap_{I_1, \dots, I_{r}} \ker \Phi^{a}_{I_1, \dots,I_{r}}$.
On the other hand, each contraction $\Phi^a_{I_1, \dots, I_r} : V_a^{\otimes (p,q)} \rightarrow V_a^{\otimes (p-r, q-r)}$ is surjective, and hence at least one copy of the module $V^{a}_{\lambda, \mu}$ inside $V_a^{\otimes (p,q)}$ belongs to the image of $\Phi^a_{I_1, \dots, I_r}$. But all copies of $V^{a}_{\lambda, \mu}$ inside $V_a^{\otimes (p,q)}$ are isomorphic, hence every $V^{a}_{\lambda, \mu}$ inside $V_a^{\otimes (p,q)}$ belongs to the image of a contraction $\Phi^{a}_{I_1, \dots,I_{r}}$. This implies that $V^{a}_{\lambda, \mu} \notin \bigcap_{I_1, \dots, I_{r}} \ker \Phi^{a}_{I_1, \dots,I_{r}}$.
Therefore,
\begin{align*}
(V_a)_r^{\{p,q\}} \cong \bigoplus_{\substack{|\lambda| = p \\ |\mu| = q}} \bigoplus_{\substack{|\lambda'|= p-r\\ |\mu'| = q-r}} \tilde{c}^{\lambda, \mu}_{\lambda', \mu'} V^{a}_{\lambda', \mu'} \otimes (H_{\lambda}\otimes H_{\mu}).
\end{align*}

Finally, if we denote $K^{(r+1)}_{p,q} = \dim (V_a)_r^{\{p,q\}}$ then
\begin{align*}
K^{(r+1)}_{p,q} = \sum_{\substack{|\lambda| = p \\ |\mu| = q}} \sum_{\substack{|\lambda'|= p-r\\ |\mu'| = q-r}} \tilde{c}^{\lambda, \mu}_{\lambda', \mu'} \dim V^{a}_{\lambda', \mu'} \dim H_{\lambda} \dim H_{\mu}.
\end{align*}
In particular, for all $a \in \ZZ_{\geq 0} \sqcup \{\infty\}$
\begin{align*}
K^{(1)}_{p,q} = \sum_{\substack{|\lambda| = p \\ |\mu| = q}} \dim V^{a}_{\lambda, \mu} \dim H_{\lambda} \dim H_{\mu}.
\end{align*}

The following theorem now follows from Propositions \ref{propEmb2VlambdaVmu}, \ref{propEmb2Soc1}, and \ref{propEmb2Appr2} and property (\ref{eqPropSoc2}) of socle filtrations. % We are now ready to formulate the branching law for any simple tensor module $V_{\lambda, \mu}$.
\begin{thm} \label{thmEmb2} Let $\g'\subset \g$ be an embedding of type II, and let $V_{\lambda, \mu} \subset V^{\{p,q\}}$. Then
\begin{align*}
\overline{\soc}_{\g'}^{(r+1)} V_{\lambda, \mu} \cong \bigoplus_{k=0}^{p-r}\bigoplus_{l=0}^{q-r} \bigoplus_{\substack{|\lambda'| = p-k \\ |\mu'| = q-l}}\bigoplus_{\substack{|\lambda''| = |\lambda'|-r \\ |\mu''| = |\mu'| - r}} T^{\lambda, \mu}_{\lambda', \mu',\lambda'', \mu''} V'_{\lambda'', \mu''},
\end{align*}
where, for $a \in \ZZ_{\geq 0}$
\begin{align*}
T^{\lambda, \mu}_{\lambda', \mu',\lambda'', \mu''} = \min (&\sum_{\gamma} m^{a}_{\lambda, \lambda'} m^{c}_{\mu, \mu'} c^{\lambda'}_{\lambda'', \gamma}c^{\mu'}_{\mu'', \gamma} ,
\binom{p}{k+r}\binom{q}{l+r} K^{(r+1)}_{k+r,l+r} \dim H_{\lambda''} \dim H_{\mu''}),
\end{align*}
and for $a \in \ZZ_{\geq 0} \sqcup \{\infty\}$ with $ a> p+q -2$
\begin{align*}
T^{\lambda, \mu}_{\lambda', \mu',\lambda'', \mu''} = \min (&\sum_{\gamma} m^{a}_{\lambda, \lambda'} m^{c}_{\mu, \mu'} c^{\lambda'}_{\lambda'', \gamma}c^{\mu'}_{\mu'', \gamma} ,
r!\binom{p}{r}\binom{q}{r} \binom{p-r}{k}\binom{q-r}{l} K^{(1)}_{k,l} \dim H_{\lambda''} \dim H_{\mu''}).
\end{align*}
\end{thm}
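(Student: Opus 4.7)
The plan is to derive the branching law by combining the realization $V_{\lambda, \mu} = V^{\{p,q\}} \cap (V_{\lambda, 0} \otimes V_{0, \mu})$ from \cite{PSt} with property (\ref{eqPropSoc2}) of socle filtrations. Applying (\ref{eqPropSoc2}) gives
$$\soc^{(r)}_{\g'} V_{\lambda, \mu} \;=\; \soc^{(r)}_{\g'} V^{\{p,q\}} \,\cap\, \soc^{(r)}_{\g'}(V_{\lambda, 0} \otimes V_{0, \mu}),$$
so the semisimple layer $\overline{\soc}^{(r+1)}_{\g'} V_{\lambda, \mu}$ embeds as a $\g'$-submodule into both $\overline{\soc}^{(r+1)}_{\g'} V^{\{p,q\}}$ and $\overline{\soc}^{(r+1)}_{\g'}(V_{\lambda, 0} \otimes V_{0, \mu})$. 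In particular, the multiplicity of each simple constituent $V'_{\lambda'', \mu''}$ in $\overline{\soc}^{(r+1)}_{\g'} V_{\lambda, \mu}$ is bounded above by the minimum of its multiplicities in the two ambient layers.

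Next one makes both ambient multiplicities explicit. Proposition \ref{propEmb2VlambdaVmu}(ii) decomposes $\overline{\soc}^{(r+1)}_{\g'}(V_{\lambda, 0} \otimes V_{0, \mu})$ into isotypic components parameterized by $(\lambda', \mu')$, contributing $V'_{\lambda'', \mu''}$ with the Littlewood--Richardson-style multiplicity $\sum_\gamma m^a_{\lambda, \lambda'} m^c_{\mu, \mu'} c^{\lambda'}_{\lambda'', \gamma} c^{\mu'}_{\mu'', \gamma}$. For $\overline{\soc}^{(r+1)}_{\g'} V^{\{p,q\}}$ one uses Proposition \ref{propEmb2Appr2} when $a$ is finite and Proposition \ref{propEmb2Soc1} when $a \in \ZZ_{\geq 0}\sqcup\{\infty\}$ with $a > p+q-2$; in either case the layer is a direct sum of terms of the form $(N_a)^{\{k+r, l+r\}}_r \otimes V'^{\{p-k-r, q-l-r\}}$. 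Applying Schur--Weyl duality to the second tensor factor together with the dimension count for $(V_a)^{\{k+r,l+r\}}_r$ carried out just before the theorem gives the multiplicity $\binom{p}{k+r}\binom{q}{l+r} K^{(r+1)}_{k+r,l+r}\dim H_{\lambda''}\dim H_{\mu''}$ (respectively the analogous formula with $K^{(1)}_{k,l}$ in the large-$a$ case, via the comparison worked out after Proposition \ref{propEmb2Appr2}). Taking the minimum of these two numbers reproduces $T^{\lambda, \mu}_{\lambda', \mu', \lambda'', \mu''}$ and yields the upper bound.

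The main obstacle is the matching lower bound, namely showing that each contribution actually attains the minimum. The strategy, modeled on the construction at the end of the proof of Theorem \ref{thmEmb1}, is to exhibit explicit highest-weight vectors for $V'_{\lambda'', \mu''}$ inside $V_{\lambda, \mu}$: start from tensor monomials of the form $c_\lambda(u_1) \otimes c_\mu(u_2)$ (with $c_\lambda, c_\mu$ the Young symmetrizers) in which $u_1, u_2$ contain a prescribed number of basis vectors from $V'$, $N_a$, and $N_c$, use the $U(\g')$-action to push these vectors into $V^{\{p,q\}}$ while keeping them inside $V_{\lambda, 0}\otimes V_{0, \mu}$, and verify via the contraction maps $\Phi_{I_1, \dots, I_r}$ that the resulting vectors populate each isotypic component up to the stipulated multiplicity. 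Combining this construction with the upper bound above gives the branching formula.
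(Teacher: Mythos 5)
Your proposal follows the paper's argument exactly: the theorem is derived from the realization $V_{\lambda,\mu}=V^{\{p,q\}}\cap(V_{\lambda,0}\otimes V_{0,\mu})$, property (\ref{eqPropSoc2}) of socle filtrations, and Propositions \ref{propEmb2VlambdaVmu}, \ref{propEmb2Soc1}, and \ref{propEmb2Appr2}, with the multiplicity of each $V'_{\lambda'',\mu''}$ appearing as the minimum of its multiplicities in the two ambient layers. You are in fact more explicit than the paper about why that minimum is actually attained (the paper simply states that the theorem ``now follows'' from the propositions, without carrying out the lower-bound construction), and your sketch of that step, modeled on the highest-weight-vector argument at the end of the proof of Theorem \ref{thmEmb1}, is the natural way to complete it.
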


The above discussion %, in particular the two approaches to computing the layers of the socle filtration of $V^{\{p,q\}}$, 
leads to the following combinatorial identity, which connects the dimensions of certain simple representations of the symmetric group with Littlewood-Richardson coefficients.

\begin{prop} For any two partitions $\lambda'$ and $\mu'$ with $|\lambda'| = p-r$, $|\mu'| = q-r$ for some integers $p,q,r$ the following holds:
\begin{align*}
\binom{p}{r}\binom{q}{r} r! \dim H_{\lambda'} \dim H_{\mu'} = \sum_{\substack{|\lambda| = p \\ |\mu| = q}} \sum_{\gamma} c^{\lambda}_{\lambda', \gamma} c^{\mu}_{\mu', \gamma} \dim H_{\lambda} \dim H_{\mu}.
\end{align*}
\end{prop}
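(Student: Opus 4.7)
The plan is to prove this purely representation-theoretically, via Frobenius reciprocity for the symmetric groups, rather than by unwinding the two expressions for $\overline{\soc}_{\g'}^{(r+1)} V^{\{p,q\}}$ in Propositions \ref{propEmb2Soc1} and \ref{propEmb2Appr2}. The alternative (and logically equivalent) route would be to compute $K^{(r+1)}_{p,q}$ via Schur--Weyl and equation (\ref{eqIdent}), compare to $K^{(1)}_{p-r,q-r}$, and extract coefficients using that the polynomials $\dim V^a_{\lambda',\mu'}$ in $a$ are linearly independent; but the direct combinatorial argument is much shorter.

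First, observe that the Littlewood--Richardson coefficients $c^{\lambda}_{\lambda',\gamma}$ and $c^{\mu}_{\mu',\gamma}$ vanish unless $|\gamma|=|\lambda|-|\lambda'|=r$ and $|\gamma|=|\mu|-|\mu'|=r$, so only $\gamma$ with $|\gamma|=r$ contribute. Recall also the classical translation of the Littlewood--Richardson rule into symmetric group language:
\begin{align*}
\mathrm{Ind}^{\SS_p}_{\SS_{p-r}\times \SS_r}\bigl(H_{\lambda'}\otimes H_{\gamma}\bigr)\;\cong\;\bigoplus_{|\lambda|=p} c^{\lambda}_{\lambda',\gamma}\,H_{\lambda}.
\end{align*}
Taking dimensions gives
\begin{align*}
\sum_{|\lambda|=p} c^{\lambda}_{\lambda',\gamma}\,\dim H_{\lambda}\;=\;\binom{p}{r}\dim H_{\lambda'}\dim H_{\gamma},
\end{align*}
and the analogous identity holds with $(\lambda,\lambda',p)$ replaced by $(\mu,\mu',q)$.

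Now I substitute these two identities into the right-hand side of the claimed equality, interchange the order of summation so that $\gamma$ is the outer index, and factor:
\begin{align*}
\sum_{|\gamma|=r}\sum_{\substack{|\lambda|=p\\ |\mu|=q}} c^{\lambda}_{\lambda',\gamma}c^{\mu}_{\mu',\gamma}\dim H_{\lambda}\dim H_{\mu}
&=\sum_{|\gamma|=r}\Bigl(\sum_{|\lambda|=p}c^{\lambda}_{\lambda',\gamma}\dim H_{\lambda}\Bigr)\Bigl(\sum_{|\mu|=q}c^{\mu}_{\mu',\gamma}\dim H_{\mu}\Bigr)\\
&=\binom{p}{r}\binom{q}{r}\dim H_{\lambda'}\dim H_{\mu'}\sum_{|\gamma|=r}(\dim H_{\gamma})^{2}.
\end{align*}
The remaining sum $\sum_{|\gamma|=r}(\dim H_{\gamma})^{2}=|\SS_r|=r!$ by the standard Plancherel identity for the symmetric group (sum of squares of dimensions of the irreducible representations equals the group order). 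This yields exactly the left-hand side, completing the proof.

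The only mild subtlety is purely notational: one must check that the identity is vacuous in the uninteresting cases $r>\min(p,q)$, which it is since both sides are zero. No serious obstacle arises; once one recognises that the Littlewood--Richardson coefficient $c^{\lambda}_{\lambda',\gamma}$ measures the branching from $\SS_p$ to $\SS_{p-r}\times\SS_r$, the identity reduces to Frobenius reciprocity plus the Plancherel formula.
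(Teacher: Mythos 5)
Your proof is correct, and it takes a genuinely different route from the paper's. The paper proves the identity as a byproduct of the module isomorphism
\begin{align*}
\bigoplus_{\{I_1, \dots, I_r\}} \Phi^a_{I_1, \dots, I_r} : (V_a)_r^{\{p,q\}} \rightarrow \bigoplus_{\{I_1, \dots, I_r\}} V_a^{\{p-r, q-r\}}
\end{align*}
for finite $a > p+q-2$: comparing the multiplicity of each simple constituent $V^a_{\lambda',\mu'}$ on the two sides (via the Schur--Weyl decomposition of $(V_a)_r^{\{p,q\}}$ and the stable formula $\tilde{c}^{\lambda,\mu}_{\lambda',\mu'}=\sum_\gamma c^{\lambda}_{\lambda',\gamma}c^{\mu}_{\mu',\gamma}$) yields exactly the claimed equality. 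This is essentially the ``alternative route'' you sketch in your opening paragraph, except that the paper matches isotypic multiplicities directly rather than total dimensions, so no linear-independence argument for the polynomials $\dim V^a_{\lambda',\mu'}$ is needed. Your argument, by contrast, stays entirely inside the representation theory of $\SS_p\times\SS_q$: the dimension count $\sum_{|\lambda|=p}c^{\lambda}_{\lambda',\gamma}\dim H_\lambda=\binom{p}{r}\dim H_{\lambda'}\dim H_\gamma$ from induction along $\SS_{p-r}\times\SS_r\subset\SS_p$, followed by factoring the double sum and the Plancherel identity $\sum_{|\gamma|=r}(\dim H_\gamma)^2=r!$, is completely standard and correct. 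What your approach buys is a short, self-contained combinatorial verification independent of the contraction-map machinery; what the paper's approach buys is the observation that the identity is not an accident but the numerical shadow of a $\g'$-module isomorphism already needed for Theorem \ref{thmEmb2}. Both are valid; yours is arguably the cleaner proof of the proposition taken in isolation.
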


\begin{proof}
For $a > p+q-2$ the following map is an isomorphism:
$$
\bigoplus_{\{I_1, \dots, I_r\}} \Phi^a_{I_1, \dots, I_r} : (V_a)_r^{\{p,q\}} \rightarrow \bigoplus_{\{I_1, \dots, I_r\}} V_a^{\{p-r, q-r\}}.
$$
This proves the statement.
\end{proof}

\subsection{Embeddings of type III}
In this section we consider embeddings $\g' \subset \g$ of type III, i.e. for which
$$V \cong kV'\oplus lV'_*, \quad V_* \cong lV' \oplus kV'_*.$$

First we derive the branching rule for diagonal embeddings $\gl(n) \subset \gl(kn + ln)$ of signature $(k,l,0)$. %, i.e. for embeddings given by 
%$$
%V^{(k+l)n}_{\downarrow \gl(n)}  = \underbrace{V^n \oplus \dots \oplus V^n}_{k} \oplus \underbrace{V^{*n} \oplus \dots \oplus V^{*n}}_{l}
%$$
%where $V^{n}$ is the natural representation of $\gl(n)$ and $V^{*n}$ is its dual. 
%We do this in several steps, using the formulas in \cite{HTW}. 
We decompose the embedding $\gl(n) \subset \gl(kn + ln)$ in the following standard way:
\begin{align} \label{eqEmbSplit}
\gl(n) \subset \gl(n) \oplus \gl(n) \cong \gl(n) \oplus \gl(n) \subset \gl(kn) \oplus \gl(ln) \subset \gl(kn + ln),
\end{align}
where the isomorphism $\gl(n) \oplus \gl(n) \cong \gl(n) \oplus \gl(n)$ is given by 
$$
(A, B) \mapsto (A, -B^T)
$$
for any $A, B \in  \gl(n)$. The other maps in (\ref{eqEmbSplit}) are the obvious ones. Recall from Proposition \ref{propBranRuleDiag} from Section \ref{secFinBran} that for diagonal embeddings $\gl(n) \subset \gl(kn)$ the following branching rule holds:
\begin{align} \label{eqDiagChap2}
{V^{kn}_{\lambda, \mu}}_{\downarrow \gl(n)} \cong  \bigoplus_{\substack{\beta^+_1, \dots, \beta^+_k \\ \beta^-_1, \dots, \beta^-_k \\ \lambda', \mu'}}
C^{(\lambda, \mu)}_{(\beta^+_1, \dots, \beta^+_k) (\beta^-_1, \dots, \beta^-_k)} D^{(\lambda', \mu')}_{(\beta^+_1, \dots, \beta^+_k) (\beta^-_1, \dots, \beta^-_k)} V^n_{\lambda', \mu'},
\end{align}
where the coefficients $C^{(\lambda, \mu)}_{(\beta^+_1, \dots, \beta^+_k) (\beta^-_1, \dots, \beta^-_k)}$ and $D^{(\lambda', \mu')}_{(\beta^+_1, \dots, \beta^+_k) (\beta^-_1, \dots, \beta^-_k)}$ are as in Section \ref{secFinBran}.

Thus, the decomposition (\ref{eqEmbSplit}), formulas 2.1.1 and 2.2.1 from \cite{HTW}, and equation (\ref{eqDiagChap2}) yield
\begin{equation} \label{eqDiagBranch}
\begin{aligned}
&{V^{(k+l)n}_{\lambda, \mu}}_{\downarrow \gl(n)} \cong &
\bigoplus c^{(\lambda, \mu)}_{(\gamma^+, \gamma^-), (\delta^+, \delta^-)} C^{(\gamma^+, \gamma^-)}_{(\alpha^+_1, \dots, \alpha^+_k) (\alpha^-_1, \dots, \alpha^-_k)} D^{(\sigma^+, \sigma^-)}_{(\alpha^+_1, \dots, \alpha^+_k) (\alpha^-_1, \dots, \alpha^-_k)} \\
&&C^{(\delta^+, \delta^-)}_{(\beta^+_1, \dots, \beta^+_l) (\beta^-_1, \dots, \beta^-_l)} D^{(\tau^+, \tau^-)}_{(\beta^+_1, \dots, \beta^+_l) (\beta^-_1, \dots, \beta^-_l)} d^{(\lambda', \mu')}_{(\sigma^+, \sigma^-), (\tau^-, \tau^+)} V^n_{\lambda', \mu'},
\end{aligned}
\end{equation}
where the sum is over all partitions $\gamma^+, \gamma^-, \delta^+, \delta^-$, $\alpha^+_1, \dots, \alpha^+_k$, $\alpha^-_1, \dots, \alpha^-_k$, $\sigma^+, \sigma^-$, $\beta^+_1, \dots, \beta^+_l$, $\beta^-_1, \dots, \beta^-_l$, $\tau^+, \tau^-$, $\lambda', \mu'$. 

\begin{thm} \label{thmEmb3}
Consider an embedding $\g' \subset \g$ of type III. Then for each $V_{\lambda,\mu} \subset V^{\otimes (p,q)}$
\begin{align*}
\overline{\soc}_{\g'}^{(r+1)}V_{\lambda, \mu} \cong \bigoplus_{m=0}^p \bigoplus_{n=0}^q \bigoplus_{\substack{|\lambda'| = m+n-r \\|\mu'| = p+q-m-n-r}} A^{\lambda, \mu}_{\lambda', \mu'}V'_{\lambda', \mu'},
\end{align*}
where 
\begin{align*}
A^{\lambda, \mu}_{\lambda', \mu'} = &\sum c^{(\lambda, \mu)}_{(\gamma^+, \gamma^-), (\delta^+, \delta^-)} C^{(\gamma^+, \gamma^-)}_{(\alpha^+_1, \dots, \alpha^+_k) (\alpha^-_1, \dots, \alpha^-_k)} D^{(\sigma^+, \sigma^-)}_{(\alpha^+_1, \dots, \alpha^+_k) (\alpha^-_1, \dots, \alpha^-_k)} \\
&C^{(\delta^+, \delta^-)}_{(\beta^+_1, \dots, \beta^+_l) (\beta^-_1, \dots, \beta^-_l)} D^{(\tau^+, \tau^-)}_{(\beta^+_1, \dots, \beta^+_l) (\beta^-_1, \dots, \beta^-_l)} d^{(\lambda', \mu')}_{(\sigma^+, \sigma^-), (\tau^-, \tau^+)}.
\end{align*}
\end{thm}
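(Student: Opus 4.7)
The plan is to combine the description of the socle filtration of $V^{\otimes(p,q)}$ over $\g'$ from Proposition \ref{propGenEmb3} with the finite-dimensional branching formula (\ref{eqDiagBranch}). First I would decompose $V^{\otimes(p,q)}$ as a $\g'$-module along the splittings of $V$ and $V_*$ supplied by Proposition \ref{propBasis}:
\begin{align*}
V^{\otimes(p,q)} = \bigoplus_{\substack{i_1,\dots,i_p \\ j_1,\dots,j_q}} \tilde V_{i_1}\otimes \cdots \otimes \tilde V_{i_p}\otimes \tilde V^*_{j_1}\otimes \cdots \otimes \tilde V^*_{j_q}.
\end{align*}
Setting $m = \#\{s : i_s \leq k\}$ and $n = \#\{t : j_t > k\}$, each summand is isomorphic as a $\g'$-module to $V'^{\otimes(m+n,\, p+q-m-n)}$. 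By Theorem 2.2 of \cite{PSt}, the $(r+1)$-th layer of the socle filtration of $V'^{\otimes(p',q')}$ consists of the simple tensor modules $V'_{\lambda',\mu'}$ with $|\lambda'| = p'-r$ and $|\mu'| = q'-r$, which on our summands translates into $|\lambda'| = m+n-r$ and $|\mu'| = p+q-m-n-r$ — exactly the index set appearing in the theorem. Property (\ref{eqPropSoc}) applied to the direct sum then produces $\overline{\soc}^{(r+1)}_{\g'} V^{\otimes(p,q)}$, and (\ref{eqPropSoc1}) recovers $\overline{\soc}^{(r+1)}_{\g'} V_{\lambda,\mu}$ by intersecting with $V_{\lambda,\mu}$.

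Next I would compute the multiplicities by finite-dimensional approximation along the lines of Lemma \ref{lemma1_embed}. Fix dual bases of $V'$ and $V'_*$ and transport them via the isomorphisms $\tilde V_j \cong V'$ or $V'_*$ and $\tilde V^*_j \cong V'_*$ or $V'$ to compatible bases of $V$ and $V_*$. Let $\g'_n \cong \gl(n) \subset \g_N \cong \gl(N)$ with $N = (k+l)n$ be the resulting nested finite-dimensional exhaustions. For $n$ large enough, the induced embedding $\g'_n \subset \g_N$ is finite-dimensional diagonal of signature $(k,l,0)$, and $V^N_{\lambda,\mu} := V_{\lambda,\mu} \cap V_N^{\otimes p} \otimes V_{N*}^{\otimes q}$ is the simple $\g_N$-module of highest weight $(\lambda,\mu)$. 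The branching formula (\ref{eqDiagBranch}) then gives
\begin{align*}
{V^N_{\lambda,\mu}}_{\downarrow \g'_n} \cong \bigoplus_{\lambda',\mu'} A^{\lambda,\mu}_{\lambda',\mu'}\, V^n_{\lambda',\mu'},
\end{align*}
and as $n \to \infty$ the $\g'_n$-modules $V^n_{\lambda',\mu'}$ stabilize to the simple tensor $\g'$-modules $V'_{\lambda',\mu'}$, while the coefficients $A^{\lambda,\mu}_{\lambda',\mu'}$ stabilize thanks to the stability of Littlewood--Richardson coefficients for $n$ large relative to $p+q$.

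Grouping the simple constituents $V'_{\lambda',\mu'}$ according to the socle layer identified in the first paragraph — namely, the admissible pairs $(m,n)$ determined by the sizes $|\lambda'|$ and $|\mu'|$ — then yields the formula in the statement. The main obstacle is establishing the compatibility between the finite-dimensional branching and the infinite-dimensional socle filtration: one must verify that on each summand $\tilde V_{i_1}\otimes \cdots \otimes \tilde V^*_{j_q}$ of type $(m+n, p+q-m-n)$ the contraction maps $\tilde\Phi_{J_1,\dots,J_s}$ of Proposition \ref{propGenEmb3} restrict to the standard contractions on $V'^{\otimes(m+n, p+q-m-n)}$, so that the layer of a constituent $V^n_{\lambda',\mu'}$ inside $V^N_{\lambda,\mu}$ is determined precisely by the size data $(|\lambda'|,|\mu'|)$ together with the parameters $(m,n)$. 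Once this compatibility is established, passing to the direct limit in $n$ assembles the stated decomposition of $\overline{\soc}^{(r+1)}_{\g'} V_{\lambda,\mu}$.
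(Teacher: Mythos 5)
Your proposal is correct and follows essentially the same route as the paper: first use the direct-sum decomposition of $V^{\otimes(p,q)}$ into summands of type $V'^{\otimes(m+n,\,p+q-m-n)}$ together with Theorem 2.2 of \cite{PSt} to pin down which simple constituents can occur on each layer, then compute the total multiplicities by a finite-dimensional diagonal exhaustion of signature $(k,l,0)$ and formula (\ref{eqDiagBranch}). The compatibility issue you flag at the end is resolved exactly as you suggest (and as the paper implicitly does): since $|\lambda'|+|\mu'|=p+q-2r$ determines the layer, non-isomorphic constituents appear on distinct layers, so the total branching multiplicities distribute unambiguously.
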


\begin{proof}
Property (\ref{eqPropSoc}) of socle filtrations implies
\begin{align} \label{eqSocVpq}
\soc_{\g'}^{(r+1)} V^{\otimes (p, q)} \cong \bigoplus_{m=0}^p \bigoplus_{n=0}^q \binom{p}{m} \binom{q} {n} k^{(m+q-n)} l^{(n+p-m)} \soc_{\g'}^{(r+1)} V'^{\otimes (m+n, p+q-m-n)}.
\end{align}
Then from (\ref{eqSocVpq}) and from Theorem 2.2 in \cite{PSt} we obtain
\begin{align*}
\overline{\soc}_{\g'}^{(r+1)}V_{\lambda, \mu} \cong \bigoplus_{m=0}^p \bigoplus_{n=0}^q \bigoplus_{\substack{|\lambda'| = m+n-r \\|\mu'| = p+q-m-n-r}} a^{\lambda, \mu}_{\lambda', \mu'}V'_{\lambda', \mu'}
\end{align*}
for some multiplicities $a^{\lambda, \mu}_{\lambda', \mu'}$. Since on the different layers of the socle filtration non-isomorphic modules appear we can compute the multiplicities $a^{\lambda, \mu}_{\lambda', \mu'}$ in the following way. We take bases of $V', V'_*, V$, and $V_*$ as in the settings before Proposition \ref{propBasis}.
%Following Proposition \ref{propBasisGL}, we take bases of $V$ and $V_*$ such that
%\begin{align*}
%&V = \mathrm{span} \{v^1_1, \dots, v^1_n, \dots, \dots, v^k_1, \dots, v^k_n, \dots, w^{1}_1, \dots w^{1}_n, \dots, \dots, w^{l}_1, \dots, w^{l}_n, \dots \} \\
%&V_* = \mathrm{span} \{v^{1*}_1, \dots, v^{1*}_n, \dots, \dots, v^{k*}_1, \dots, v^{k*}_n, \dots, w^{1*}_1, \dots w^{1*}_n, \dots, \dots, w^{l*}_1, \dots, w^{l*}_n, \dots \}
%\end{align*}
%and such that
%\begin{align*}
%\g' \cong \mathrm{span} \{v^1_i \otimes v^{1*}_j + \dots + v^k_i \otimes v^{k*}_j - w^{1}_j \otimes w^{1*}_i - \dots - w^{l}_j \otimes w^{l*}_i \}.
%\end{align*}
Then we construct exhaustions of $\g'$ and $\g$ to obtain a commutative diagram of embeddings in which all vertical arrows have signature $(k,l,0)$. Thus from (\ref{eqDiagBranch}) we obtain the values of the multiplicities.
\end{proof}

For all other cases of embeddings the derivation of the branching laws follows the same ideas as for the pair $\g', \g \cong \gl(\infty)$. Therefore, we do not give here the explicit computations and list only the end results in the Appendix.

\newpage
\section{Appendix}
%\newpage
%\begin{comment}
\FloatBarrier
\begin{table}[h]
\centering
\begin{tabular} {|c|l|}
%\multicolumn{2}{c} {{ \protect \large Appendix}} \\
%\multicolumn{2}{c} {} \\
%\multicolumn{2}{c} {} \\
\hline
\multicolumn{2}{|c|} {Embeddings of type I} \\
\hline
& \\
{$\gl(V', V'_*) \subset \gl(V,V_*)$} & \(\displaystyle \soc_{\g'}^{(r+1)} V_{\lambda, \mu} =(\sum_{n_1 + n_2 = r} \soc^{(n_1+1)}_{\g'} V_{\lambda,0}\otimes \soc^{(n_2+1)}_{\g'} V_{0, \mu}) \cap V^{\{p, q \}} \) \\
$\soc_{\g'}V \cong V' \oplus N_a$, $V/\soc_{\g'} V \cong N_b,$ &\\
$\soc_{\g'}V_* \cong V' \oplus N_c$, $V_*/\soc_{\g'} V_* \cong N_d,$&\\
$\left\langle N_a, N_c \right\rangle = 0$ & \( \displaystyle
 \overline{\soc}_{\g'}^{(r+1)} V_{\lambda, \mu} \cong \bigoplus_{\substack{n_1 + n_2 = r \\ \lambda'', \mu''}} \bigoplus_{\substack{|\lambda'| = |\lambda''| - n_1 \\ |\mu'| = |\mu''| - n_2}}m^{a}_{\lambda, \lambda''} m^{b}_{\lambda'', \lambda'} m^{c}_{\mu, \mu''} m^{d}_{\mu'', \mu'} V'_{\lambda', \mu'} \)\\
 &\\
&(see Section \ref{secGLEmb1} for the notations)\\
% & $a = \dim N_a, b=\dim N_b, c = \dim N_c, d = \dim N_d$ \\
\hline
& \\
{$\sp(V') \subset \sp(V)$} & \(\displaystyle \soc_{\g'}^{(r+1)} V_{\left\langle \lambda \right\rangle} = \soc^{(r+1)}_{\gl(V',V')} V_{\lambda,0} \cap V^{\left\langle p \right\rangle} \) \\
&\\
$\soc_{\g'}V \cong V' \oplus N_a$, $V/\soc_{\g'} V \cong N_b,$&\\
$\left\langle N_a, N_a \right\rangle = 0$ & \( \displaystyle
 \overline{\soc}_{\g'}^{(r+1)} V_{\left\langle \lambda \right\rangle} \cong \bigoplus_{\lambda''} \bigoplus_{|\lambda'| = |\lambda''| - r}m^{a}_{\lambda, \lambda''} m^{b}_{\lambda'', \lambda'} V'_{\left\langle \lambda' \right\rangle} \)\\
\hline
& \\
$\so(V') \subset \so(V)$ & \(\displaystyle \soc_{\g'}^{(r+1)} V_{[\lambda]} = \soc^{(r+1)}_{\gl(V',V')} V_{\lambda,0} \cap V^{[p]} \) \\
&\\
$\soc_{\g'}V \cong V' \oplus N_a$, $V/\soc_{\g'} V \cong N_b,$&\\
$\left\langle N_a, N_a \right\rangle = 0$ & \(\displaystyle
\overline{\soc}_{\g'}^{(r+1)} V_{[\lambda]} \cong \bigoplus_{\lambda''} \bigoplus_{|\lambda'| = |\lambda''| - r}m^{a}_{\lambda, \lambda''} m^{b}_{\lambda'', \lambda'} V'_{[ \lambda']} \) \\
\hline
%\multicolumn{2}{c}{} \\
%\multicolumn{2}{c}{} \\
%\multicolumn{2}{c}{} \\
%\multicolumn{2}{c}{} \\
%\multicolumn{2}{c}{} \\
%\hline
%\multicolumn{2}{c}{\\\\\\\\\\}\\

\end{tabular}
\end{table}

\FloatBarrier

\FloatBarrier
\begin{table}[h]
\centering
\begin{tabular} {|c|l|}
\hline
\multicolumn{2}{|c|}{Embeddings of type II} \\
\hline
&\\
{$\gl(V', V'_*) \subset \gl(V, V_*)$} & \( \displaystyle
\overline{\soc}_{\g'}^{(r+1)} V_{\lambda, \mu} \cong \bigoplus_{k=0}^{p-r}\bigoplus_{l=0}^{q-r} \bigoplus_{\substack{|\lambda'| = p-k \\ |\mu'| = q-l}}\bigoplus_{\substack{|\lambda''| = |\lambda'|-r \\ |\mu''| = |\mu'| - r}} T^{\lambda, \mu}_{\lambda', \mu',\lambda'', \mu''} V'_{\lambda'', \mu''}, \)\\
%$V \cong V' \oplus N_a$& \\
%$V_* \cong V'_* \oplus N_c$&\\
$V \cong V' \oplus N_a$& where for $a \in \ZZ_{\geq 0}$ \\
$V_* \cong V'_* \oplus N_c$ &\\
&\(\displaystyle T^{\lambda, \mu}_{\lambda', \mu',\lambda'', \mu''} = \min (\sum_{\gamma} m^{a}_{\lambda, \lambda'} m^{c}_{\mu, \mu'} c^{\lambda'}_{\lambda'', \gamma}c^{\mu'}_{\mu'', \gamma} ,\)\\
&\(\displaystyle 
\binom{p}{k+r}\binom{q}{l+r} K^{(r+1)}_{k+r,l+r} \dim H_{\lambda''} \dim H_{\mu''}) \)\\
&\\
&and for $a \in \ZZ_{\geq 0} \sqcup \{\infty\}$ with $ a> p+q -2$ \\
&\\
&\(\displaystyle T^{\lambda, \mu}_{\lambda', \mu',\lambda'', \mu''}  = \min (\sum_{\gamma} m^{a}_{\lambda, \lambda'} m^{c}_{\mu, \mu'} c^{\lambda'}_{\lambda'', \gamma}c^{\mu'}_{\mu'', \gamma} \) ,\\
&\(\displaystyle 
r!\binom{p}{r}\binom{q}{r} \binom{p-r}{k}\binom{q-r}{l} K^{(1)}_{k,l} \dim H_{\lambda''} \dim H_{\mu''}). \) \\
&\\
& \(\displaystyle K^{(r+1)}_{p,q} = \sum_{\substack{|\lambda| = p \\ |\mu| = q}} \sum_{\substack{|\lambda'|= p-r\\ |\mu'| = q-r}} \tilde{c}^{\lambda, \mu}_{\lambda', \mu'} \dim V^{a}_{\lambda', \mu'} \dim H_{\lambda} \dim H_{\mu}, \text{ for } a \in \ZZ_{\geq 0}\) \\
& \(\displaystyle K^{(1)}_{p,q} = \sum_{\substack{|\lambda| = p \\ |\mu| = q}} \dim V^{a}_{\lambda, \mu} \dim H_{\lambda} \dim H_{\mu}, \text{ for } a \in \ZZ_{\geq 0} \sqcup \{\infty\} \) \\
&(see Section \ref{secGLEmb2} for the notations)\\
%& $V^{a}_{\lambda, \mu}$ denotes the simple $\gl(a)$-module with highest weight $(\lambda, \mu)$\\
%& $H_{\lambda}$ ($H_{\mu}$) is the simple $\SS_p$ ($\SS_q$)-module corresponding to $\lambda$ ($\mu$) \\
%&\\
%&\(\displaystyle \tilde{c}^{\lambda, \mu}_{\lambda', \mu'} \text{ is defined by } V^{a}_{\lambda, 0} \otimes V^{a}_{0,\mu} \cong \bigoplus_{\lambda', \mu'} \tilde{c}^{\lambda, \mu}_{\lambda', \mu'} V^{a}_{\lambda', \mu'} \text{ (\cite{HTW}, \cite{K}, Section \ref{secGLEmb2})}\) \\
%&where $\tilde{c}^{\lambda, \mu}_{\lambda', \mu'} = \sum_{\gamma} c^{\lambda}_{\lambda', \gamma}c^{\mu}_{\mu', \gamma}$ for $a > p+q-2$ (see e.g. \cite{K}, \cite{HTW}). \\
%&For $a \leq p+q-2$, $\tilde{c}^{\lambda, \mu}_{\lambda', \mu'}$ is obtained from $\sum_{\gamma} c^{\lambda}_{\lambda', \gamma}c^{\mu}_{\mu', \gamma}$ by the modification rules in \cite{K}. \\
\hline

\end{tabular}
\end{table}
\FloatBarrier

\FloatBarrier
\begin{table}
\centering
\begin{tabular} {|l|l|}
\hline
\multicolumn{2}{|c|}{Embeddings of type II} \\
\hline
&\\
{$\sp(V') \subset \sp(V)$} & \( \displaystyle
\overline{\soc}_{\g'}^{(r+1)} V_{\left\langle \lambda \right\rangle} \cong \bigoplus_{s=0}^{d-2r} \bigoplus_{|\lambda'| = d-s} \bigoplus_{|\lambda''| = |\lambda'| - 2r}T^{\lambda}_{\lambda', \lambda''} V'_{\left\langle \lambda'' \right\rangle}, \)\\
& \\
$V \cong V' \oplus N_a$ & where for $a \in 2\ZZ_{\geq 0}$ \\
&\(\displaystyle T^{\lambda}_{\lambda', \lambda''} = \min \{\sum_{\gamma} m^a_{\lambda, \lambda'} c^{\lambda'}_{\lambda'' (2\gamma)^T} , \binom{d}{s+2r}K^{(r+1)}_{s+2r} \dim H_{\lambda''} \} \)\\
&\\
&and for $a \in 2\ZZ_{\geq 0} \sqcup \{\infty\}$ with $a > 2d - 2$ \\
&\(\displaystyle T^{\lambda}_{\lambda', \lambda''} = \min \{\sum_{\gamma} m^a_{\lambda, \lambda'} c^{\lambda'}_{\lambda'' (2\gamma)^T} , r!\binom{d}{r}\binom{d}{r} \binom{d-2r}{s}K^{(1)}_s \dim H_{\lambda''} \}. \) \\
& \(\displaystyle K^{(r+1)}_d = \sum_{|\lambda| = d} \sum_{|\lambda'| = d-2r} \tilde{c}^{\lambda}_{\lambda'} \dim V^a_{\left\langle \lambda' \right\rangle} \dim H_{\lambda}, \text{ for } a \in 2\ZZ_{\geq 0} \) \\
& \(\displaystyle K^{(1)}_d = \sum_{|\lambda| = d}\dim V^a_{\left\langle \lambda \right\rangle} \dim H_{\lambda}, \text{ for } a \in 2\ZZ_{\geq 0} \sqcup \{\infty\} \) \\
& $V^a_{\left\langle \lambda \right\rangle}$ denotes the simple $\sp(a)$-module with highest weight $\lambda$\\
&\\
& $H_{\lambda}$ is the simple $\SS_p$-module corresponding to $\lambda$ \\
&\\
&\(\displaystyle \tilde{c}^{\lambda}_{\lambda'}  \text{ is defined by } {V^a_{\lambda, 0}}_{\downarrow \sp(a)} \cong \bigoplus_{\lambda'} \tilde{c}^{\lambda}_{\lambda'} V^a_{\left\langle \lambda' \right\rangle} \text{  (\cite{HTW}, \cite{K}) }\) \\
%&where $\tilde{c}^{\lambda}_{\lambda'}  = \sum_{ \gamma} c^{\lambda}_{\lambda' (2\gamma)^T}$ for $a \geq 2d$ (see e.g. \cite{K}, \cite{HTW}). \\
%&For $a < 2d$, $\tilde{c}^{\lambda}_{\lambda'}$ is obtained from $\sum_{ \gamma} c^{\lambda}_{\lambda' (2\gamma)^T}$ by the modification rules in \cite{K}. \\
\hline
&\\
{$\so(V') \subset \so(V)$} & \( \displaystyle
\overline{\soc}_{\g'}^{(r+1)} V_{[\lambda]} \cong \bigoplus_{s=0}^{d-2r} \bigoplus_{|\lambda'| = d-s} \bigoplus_{|\lambda''| = |\lambda'| - 2r}T^{\lambda}_{\lambda', \lambda''} V'_{[\lambda'']}, \)\\
& \\
$V \cong V' \oplus N_a$ & where for $a \in \ZZ_{\geq 0}$ \\
&\(\displaystyle T^{\lambda}_{\lambda', \lambda''} = \min \{\sum_{\gamma} m^a_{\lambda, \lambda'} c^{\lambda'}_{\lambda'' 2\gamma} , \binom{d}{s+2r}K^{(r+1)}_{s+2r} \dim H_{\lambda''} \} \)\\
&and for $a \in \ZZ_{\geq 0} \sqcup \{\infty\}$ with $a > 2d-2$ \\
&\(\displaystyle T^{\lambda}_{\lambda', \lambda''} = \min \{\sum_{\gamma} m^a_{\lambda, \lambda'} c^{\lambda'}_{\lambda'' 2\gamma} , r!\binom{d}{r}\binom{d}{r} \binom{d-2r}{s}K^{(1)}_s \dim H_{\lambda''} \}. \) \\
& \(\displaystyle K^{(r+1)}_d = \sum_{|\lambda| = d} \sum_{|\lambda'| = d-2r} \tilde{c}^{\lambda}_{\lambda'} \dim V^a_{[\lambda']} \dim H_{\lambda}, \text{ for } a \in \ZZ_{\geq 0} \) \\
& \(\displaystyle K^{(1)}_d = \sum_{|\lambda| = d}\dim V^a_{[\lambda]} \dim H_{\lambda}, \text{ for } a \in \ZZ_{\geq 0} \sqcup \{\infty\} \) \\
& $V^a_{[\lambda]}$ denotes the simple $\so(a)$-module with highest weight $\lambda$\\
&\\
& $H_{\lambda}$ is the simple $\SS_p$-module corresponding to $\lambda$ \\
&\\
&\(\displaystyle \tilde{c}^{\lambda}_{\lambda'}  \text{ is defined by } {V^a_{\lambda, 0}}_{\downarrow \so(a)} \cong \bigoplus_{\lambda'} \tilde{c}^{\lambda}_{\lambda'} V^a_{[\lambda']} \text{  (\cite{HTW}, \cite{K}) }\) \\
%&where $\tilde{c}^{\lambda}_{\lambda'}  = \sum_{ \gamma} c^{\lambda}_{\lambda' (2\gamma)^T}$ for $a \geq 2d$ (see e.g. \cite{K}, \cite{HTW}). \\
%&For $a < 2d$, $\tilde{c}^{\lambda}_{\lambda'}$ is obtained from $\sum_{ \gamma} c^{\lambda}_{\lambda' (2\gamma)^T}$ by the modification rules in \cite{K}. \\
\hline
\end{tabular}
\end{table}
\FloatBarrier

\FloatBarrier
\begin{table}
\centering
\begin{tabular} {|l|l|}
\hline
\multicolumn{2}{|c|}{Embeddings of type III} \\
\hline
& \\
$\gl(V', V'_*) \subset \gl(V, V_*)$ & \(\displaystyle \overline{\soc}_{\g'}^{(r+1)}V_{\lambda, \mu} \cong \bigoplus_{m=0}^p \bigoplus_{n=0}^q \bigoplus_{\substack{|\lambda'| = m+n-r \\|\mu'| = p+q-m-n-r}} A^{\lambda, \mu}_{\lambda', \mu'}V'_{\lambda', \mu'}, \) \\
% & \\
$V \cong kV' \oplus lV'_*$& where \\
$V_* \cong lV' \oplus kV'_*$ &\\
&\(\displaystyle A^{\lambda, \mu}_{\lambda', \mu'} = \sum c^{(\lambda, \mu)}_{(\gamma^+, \gamma^-), (\delta^+, \delta^-)} C^{(\gamma^+, \gamma^-)}_{(\alpha^+_1, \dots, \alpha^+_k) (\alpha^-_1, \dots, \alpha^-_k)} D^{(\sigma^+, \sigma^-)}_{(\alpha^+_1, \dots, \alpha^+_k) (\alpha^-_1, \dots, \alpha^-_k)} \)\\
&\(\displaystyle C^{(\delta^+, \delta^-)}_{(\beta^+_1, \dots, \beta^+_l) (\beta^-_1, \dots, \beta^-_l)} D^{(\tau^+, \tau^-)}_{(\beta^+_1, \dots, \beta^+_l) (\beta^-_1, \dots, \beta^-_l)} d^{(\lambda', \mu')}_{(\sigma^+, \sigma^-), (\tau^-, \tau^+)} \) \\
&\\
&(see Section \ref{secFinBran} for the notations)\\
\hline
& \\
$\sp(V') \subset \sp(V)$ & \(\displaystyle \overline{\soc}_{\g'}^{(r+1)} V_{\left\langle \lambda \right\rangle} \cong \bigoplus_{|\lambda'| = d - 2r}C^{\lambda}_{\lambda'}V'_{\left\langle \lambda' \right\rangle}, \) \\
% & \\
$V \cong kV'$ & where \\
&\(\displaystyle C^{\lambda}_{\lambda'} = \sum_{\mu_1, \dots, \mu_k, \lambda'} A^{\lambda}_{\mu_1, \dots, \mu_k} B^{\lambda'}_{\mu_1, \dots, \mu_k}, \text{ for } k > 2\) \\
&\(\displaystyle C^{\lambda}_{\lambda'} = \sum_{\mu, \nu, \lambda'} a^{\lambda}_{\mu, \nu} b^{\lambda'}_{\mu, \nu}, \text{ for } k = 2; \) \\
& \(\displaystyle A^{\lambda}_{\mu_1, \dots, \mu_k} = \sum_{\alpha_1, \dots, \alpha_{k-2}} a^{\lambda}_{\alpha_1, \mu_1} a^{\alpha_1}_{\alpha_2, \mu_2} \dots a^{\alpha_{k-3}}_{\alpha_{k-2}, \mu_{k-2}} a^{\alpha_{k-2}}_{\mu_{k-1}, \mu_k}, \) \\
& \(\displaystyle B^{\lambda}_{\mu_1, \dots, \mu_k} = \sum_{\alpha_1, \dots, \alpha_{k-2}} b^{\alpha_1}_{\mu_1, \mu_2} b^{\alpha_2}_{\alpha_1, \mu_3} \dots b^{\alpha_{k-2}}_{\alpha_{k-3}, \mu_{k-1}} b^{\lambda}_{\alpha_{k-2}, \mu_k}, \) \\
& \(\displaystyle a^{\lambda}_{\mu, \nu} = \sum_{\delta, \gamma} c^{\gamma}_{\mu \nu} c^{\lambda}_{\gamma (2\delta)^T}, \)\\
&\(\displaystyle b^{\lambda}_{\mu, \nu} = \sum_{\alpha, \beta, \gamma} c^{\lambda}_{\alpha \beta}c^{\mu}_{\alpha \gamma}c^{\nu}_{\beta \gamma} \) \\
\hline
& \\
$\so(V') \subset \so(V)$ & \(\displaystyle \overline{\soc}_{\g'}^{(r+1)} V_{[\lambda]} \cong \bigoplus_{|\lambda'| = d - 2r}C^{\lambda}_{\lambda'}V'_{[\lambda']}, \) \\
% & \\
$V \cong kV'$ & where \\
&\(\displaystyle C^{\lambda}_{\lambda'} = \sum_{\mu_1, \dots, \mu_k, \lambda'} A^{\lambda}_{\mu_1, \dots, \mu_k} B^{\lambda'}_{\mu_1, \dots, \mu_k}, \text{ for } k > 2\) \\
&\(\displaystyle C^{\lambda}_{\lambda'} = \sum_{\mu, \nu, \lambda'} a^{\lambda}_{\mu, \nu} b^{\lambda'}_{\mu, \nu}, \text{ for } k = 2; \) \\
& \(\displaystyle A^{\lambda}_{\mu_1, \dots, \mu_k} = \sum_{\alpha_1, \dots, \alpha_{k-2}} a^{\lambda}_{\alpha_1, \mu_1} a^{\alpha_1}_{\alpha_2, \mu_2} \dots a^{\alpha_{k-3}}_{\alpha_{k-2}, \mu_{k-2}} a^{\alpha_{k-2}}_{\mu_{k-1}, \mu_k}, \) \\
& \(\displaystyle B^{\lambda}_{\mu_1, \dots, \mu_k} = \sum_{\alpha_1, \dots, \alpha_{k-2}} b^{\alpha_1}_{\mu_1, \mu_2} b^{\alpha_2}_{\alpha_1, \mu_3} \dots b^{\alpha_{k-2}}_{\alpha_{k-3}, \mu_{k-1}} b^{\lambda}_{\alpha_{k-2}, \mu_k}, \) \\
& \(\displaystyle a^{\lambda}_{\mu, \nu} = \sum_{\delta, \gamma} c^{\gamma}_{\mu \nu} c^{\lambda}_{\gamma 2\delta}, \)\\
&\(\displaystyle b^{\lambda}_{\mu, \nu} = \sum_{\alpha, \beta, \gamma} c^{\lambda}_{\alpha \beta}c^{\mu}_{\alpha \gamma}c^{\nu}_{\beta \gamma} \) \\
\hline
\end{tabular}
\end{table}
\FloatBarrier

\FloatBarrier
\begin{table}
\centering
\begin{tabular} {|l|l|}
\hline
\multicolumn{2}{|c|}{Embeddings of type III} \\
\hline
& \\
$\sp(V') \subset \gl(V, V_*)$ & \(\displaystyle \overline{\soc}_{\g'} V_{\lambda, \mu} \cong \bigoplus_{|\sigma| = p+q - 2r} A^{\lambda, \mu}_{\sigma} V'_{\left\langle \sigma \right\rangle}, \) \\
% & \\
$V \cong V_* \cong kV'$ & where for $k = 1$ \\
&\(\displaystyle A^{\lambda, \mu}_{\sigma}  = \sum_{\alpha, \beta, \gamma} c^{\sigma}_{\alpha \beta} c^{\lambda}_{\alpha (2\gamma)^T} c^{\mu}_{\beta (2\delta)^T}, \) \\
& and for $k \geq 2$ \\
& \(\displaystyle A^{\lambda, \mu}_{\sigma} = \sum_{\substack{\alpha^+_1, \dots, \alpha^+_k \\ \alpha^-_1, \dots, \alpha^-_k}} \sum_{ \substack{\lambda', \mu' \\ \alpha, \beta, \gamma}} C^{(\lambda, \mu)}_{(\alpha^+_1, \dots, \alpha^+_k) (\alpha^-_1, \dots, \alpha^-_k)} D^{(\lambda', \mu')}_{(\alpha^+_1, \dots, \alpha^+_k) (\alpha^-_1, \dots, \alpha^-_k)}c^{\sigma}_{\alpha \beta} c^{\lambda'}_{\alpha (2\gamma)^T} c^{\mu'}_{\beta (2\delta)^T}\) \\
& (see Section \ref{secFinBran} for the notations)\\
\hline
& \\
$\so(V') \subset \gl(V, V_*)$ & \(\displaystyle \overline{\soc}_{\g'} V_{\lambda, \mu} \cong \bigoplus_{|\sigma| = p+q - 2r} B^{\lambda, \mu}_{\sigma} V'_{[\sigma]}, \) \\
% & \\
$V \cong V_* \cong kV'$ & where for $k = 1$ \\
&\(\displaystyle B^{\lambda, \mu}_{\sigma}  = \sum_{\alpha, \beta, \gamma} c^{\sigma}_{\alpha \beta} c^{\lambda}_{\alpha (2\gamma)} c^{\mu}_{\beta (2\delta)}, \) \\
& and for $k \geq 2$ \\
& \(\displaystyle B^{\lambda, \mu}_{\sigma} = \sum_{\substack{\alpha^+_1, \dots, \alpha^+_k \\ \alpha^-_1, \dots, \alpha^-_k}} \sum_{ \substack{\lambda', \mu' \\ \alpha, \beta, \gamma}} C^{(\lambda, \mu)}_{(\alpha^+_1, \dots, \alpha^+_k) (\alpha^-_1, \dots, \alpha^-_k)} D^{(\lambda', \mu')}_{(\alpha^+_1, \dots, \alpha^+_k) (\alpha^-_1, \dots, \alpha^-_k)}c^{\sigma}_{\alpha \beta} c^{\lambda'}_{\alpha (2\gamma)} c^{\mu'}_{\beta (2\delta)}\) \\
\hline
& \\
$\gl(V', V'_*) \subset \sp(V)$ & \(\displaystyle \overline{\soc}_{\g'}^{(r+1)}V_{\left\langle \lambda \right\rangle} = \bigoplus_{m=0}^d \bigoplus_{\substack{|\lambda'| = m-r \\ |\mu'| = d-m-r}} A^{\lambda}_{\lambda', \mu'} V'_{\lambda', \mu'}, \) \\
% & \\
$V \cong kV' \oplus kV'_*$ & where for $k = 1$ \\
&\(\displaystyle A^{\lambda}_{\lambda', \mu'} = \sum_{\gamma, \delta} c^{\gamma}_{\lambda', \mu'}c^{\lambda}_{\gamma, 2\delta}, \) \\
& and for $k \geq 2$ \\
& \(\displaystyle A^{\lambda}_{\lambda', \mu'} = \sum_{\gamma, \delta, \lambda_1, \mu_1} \sum_{\substack{\beta^+_1, \dots, \beta^+_k \\ \beta^-_1, \dots, \beta^-_k}}
c^{\gamma}_{\lambda_1, \mu_1}c^{\lambda}_{\gamma, 2\delta} C^{(\lambda_1, \mu_1)}_{(\beta^+_1, \dots, \beta^+_k) (\beta^-_1, \dots, \beta^-_k)} D^{(\lambda', \mu')}_{(\beta^+_1, \dots, \beta^+_k) (\beta^-_1, \dots, \beta^-_k)}\) \\
\hline
& \\
$\gl(V', V'_*) \subset \so(V)$ & \(\displaystyle \overline{\soc}_{\g'}^{(r+1)}V_{[\lambda]} = \bigoplus_{m=0}^d \bigoplus_{\substack{|\lambda'| = m-r \\ |\mu'| = d-m-r}} B^{\lambda}_{\lambda', \mu'} V'_{\lambda', \mu'}, \) \\
% & \\
$V \cong kV' \oplus kV'_*$ & where for $k = 1$ \\
&\(\displaystyle B^{\lambda}_{\lambda', \mu'} = \sum_{\gamma, \delta} c^{\gamma}_{\lambda', \mu'}c^{\lambda}_{\gamma, (2\delta)^T}, \) \\
& and for $k \geq 2$ \\
& \(\displaystyle B^{\lambda}_{\lambda', \mu'} = \sum_{\gamma, \delta, \lambda_1, \mu_1} \sum_{\substack{\beta^+_1, \dots, \beta^+_k \\ \beta^-_1, \dots, \beta^-_k}}
c^{\gamma}_{\lambda_1, \mu_1}c^{\lambda}_{\gamma, (2\delta)^T} C^{(\lambda_1, \mu_1)}_{(\beta^+_1, \dots, \beta^+_k) (\beta^-_1, \dots, \beta^-_k)} D^{(\lambda', \mu')}_{(\beta^+_1, \dots, \beta^+_k) (\beta^-_1, \dots, \beta^-_k)}\) \\
\hline
\end{tabular}
\end{table}
\FloatBarrier

\FloatBarrier
\begin{table}[h]
\centering
\begin{tabular} {|l|l|}
\hline
\multicolumn{2}{|c|}{Embeddings of type III} \\
\hline
& \\
$\so(V') \subset \sp(V)$ & \(\displaystyle \overline{\soc}_{\g'}^{(r+1)} V_{ \left\langle \lambda \right\rangle} \cong \bigoplus_{|\lambda'| = d - 2r} A^{\lambda}_{\lambda'} V'_{[\lambda']}, \) \\
% & \\
$V \cong 2kV'$ & where for $k = 1$ \\
&\(\displaystyle A^{\lambda}_{\lambda'} = \sum_{\substack{\gamma, \delta, \lambda_1, \mu_1 \\ \alpha, \beta, \sigma, \tau}} c^{\gamma}_{\lambda_1, \mu_1}c^{\lambda}_{\gamma, 2\delta} c^{\lambda'}_{\alpha, \beta} c^{\lambda_1}_{\alpha, 2\sigma} c^{\mu_1}_{\beta, 2\tau}, \) \\
& and for $k \geq 2$ \\
& \(\displaystyle A^{\lambda}_{\lambda'} = \sum%_{\substack{\beta^+_1, \dots, \beta^+_k \\ \beta^-_1, \dots, \beta^-_k}} 
%\sum_{\substack{\gamma, \delta, \\ \lambda_1, \mu_1, \lambda_2, \mu_2 \\ \alpha, \beta, \sigma, \tau}}
c^{\gamma}_{\lambda_1, \mu_1}c^{\lambda}_{\gamma, 2\delta} C^{(\lambda_1, \mu_1)}_{(\beta^+_1, \dots, \beta^+_k) (\beta^-_1, \dots, \beta^-_k)}D^{(\lambda_2, \mu_2)}_{(\beta^+_1, \dots, \beta^+_k) (\beta^-_1, \dots, \beta^-_k)} c^{\lambda'}_{\alpha, \beta} c^{\lambda_2}_{\alpha, 2\sigma} c^{\mu_2}_{\beta, 2\tau}\) \\
&\\
& (see Section \ref{secFinBran} for the notations)\\
\hline
& \\
$\sp(V') \subset \so(V)$ & \(\displaystyle \overline{\soc}_{\g'}^{(r+1)} V_{[\lambda]} \cong \bigoplus_{|\lambda'| = d - 2r} A^{\lambda}_{\lambda'} V'_{\left\langle \lambda' \right\rangle}, \) \\
% & \\
$V \cong 2kV'$ & where for $k = 1$ \\
&\(\displaystyle A^{\lambda}_{\lambda'} = \sum_{\substack{\gamma, \delta, \lambda_1, \mu_1 \\ \alpha, \beta, \sigma, \tau}} c^{\gamma}_{\lambda_1, \mu_1}c^{\lambda}_{\gamma, (2\delta)^T} c^{\lambda'}_{\alpha, \beta} c^{\lambda_1}_{\alpha, 2\sigma} c^{\mu_1}_{\beta, (2\tau)^T},\) \\
& and for $k \geq 2$ \\
& \(\displaystyle A^{\lambda}_{\lambda'} = \sum%_{\substack{\beta^+_1, \dots, \beta^+_k \\ \beta^-_1, \dots, \beta^-_k}} 
%\sum_{\substack{\gamma, \delta, \\ \lambda_1, \mu_1, \lambda_2, \mu_2 \\ \alpha, \beta, \sigma, \tau}}
c^{\gamma}_{\lambda_1, \mu_1}c^{\lambda}_{\gamma, (2\delta)^T} C^{(\lambda_1, \mu_1)}_{(\beta^+_1, \dots, \beta^+_k) (\beta^-_1, \dots, \beta^-_k)} D^{(\lambda_2, \mu_2)}_{(\beta^+_1, \dots, \beta^+_k) (\beta^-_1, \dots, \beta^-_k)} c^{\lambda'}_{\alpha, \beta} c^{\lambda_2}_{\alpha, 2\sigma} c^{\mu_2}_{\beta, (2\tau)^T}\) \\
%& (see Section \ref{secFinBran} for the notations)\\
&\\
\hline
%& \\
%$\sp(V') \subset \gl(V, V_*)$ & \(\displaystyle \overline{\soc}_{\g'} V_{\lambda, \mu} \cong \bigoplus_{|\sigma| = p+q - 2r} A^{\lambda, \mu}_{\sigma} V'_{\left\langle \sigma \right\rangle}, \) \\
% & \\
%$V \cong V_* \cong kV'$ & where for $k = 1$ \\
%&\(\displaystyle A^{\lambda, \mu}_{\sigma}  = \sum_{\alpha, \beta, \gamma} c^{\sigma}_{\alpha \beta} c^{\lambda}_{\alpha (2\gamma)^T} c^{\mu}_{\beta (2\delta)^T}, \) \\
%& and for $k \geq 2$ \\
%& \(\displaystyle A^{\lambda, \mu}_{\sigma} = \sum_{\substack{\alpha^+_1, \dots, \alpha^+_k \\ \alpha^-_1, \dots, \alpha^-_k}} \sum_{ \substack{\lambda', \mu' \\ \alpha, \beta, \gamma}} C^{(\lambda, \mu)}_{(\alpha^+_1, \dots, \alpha^+_k) (\alpha^-_1, \dots, \alpha^-_k)} D^{(\lambda', \mu')}_{(\alpha^+_1, \dots, \alpha^+_k) (\alpha^-_1, \dots, \alpha^-_k)}c^{\sigma}_{\alpha \beta} c^{\lambda'}_{\alpha (2\gamma)^T} c^{\mu'}_{\beta (2\delta)^T}\) \\
%\hline
\end{tabular}
\end{table}
\FloatBarrier

\end{document}